\renewcommand{\thefootnote}{\fnsymbol{footnote}}
\newcommand{\imagepath}{./}
\newcounter{Breferencecounter}
\newcounter{tempcounter}
\newcommand{\WBecker}{W_{\mathrm{Becker}}^{\nu=0}}
\newcommand{\WBeckerhat}{{\widehat W}_{\mathrm{Becker}}^{\nu=0}}
\newcommand{\tauHencky}{\tau_{\mathrm{H}}}
\newcommand{\tauBecker}{\tau_{\mathrm{B}}}
\newcommand{\BiotBecker}{\Biot_{\mathrm{B}}}
\newcommand{\Biothat}{\widehat{T}^{\mathrm{Biot}}}
\newcommand{\Uhat}{\widehat{U}}
\newcommand{\alphahat}{\hat{\alpha}}
\newcommand{\hhat}{\hat{h}}
\newcommand{\iprod}[1]{<#1>}
\newcommand{\stress}{T}
\renewcommand{\stretch}{{\mathcal{E}}}
\newcommand{\vect}[3]{\matr{#1\\#2\\#3}}
\newcommand{\vects}[3]{\matrs{#1\\#2\\#3}}
\newcommand{\iplane}{\mathcal{E}}
\newcommand{\fplane}{\mathcal{F}}
\newcommand{\dmatr}[3]{\matr{#1&0&0\\ 0&#2&0\\ 0&0&#3}}	%
\newcommand{\dmatrs}[3]{\matrs{#1&0&0\\ 0&#2&0\\ 0&0&#3}}	%
\newcommand{\signdmatr}[3]{\signmatr{#1&0&0\\ 0&#2&0\\ 0&0&#3}}	%
\newcommand{\axiombox}[2]{%
\begin{center}%
\fbox{\parbox{380\unitlength}{%
\textbf{#1}\\[4mm]
#2%
}}
\end{center}
}
\newcommand{\upperPhant}{\vphantom{\int^1}}
\newcommand{\Boriginal}[1]{#1}
\newcommand{\Bedited}[1]{}
\newcommand{\Bfootnote}[1]{}		%
\newcommand{\Blabel}[1]{%
\setcounter{tempcounter}{\number\value{footnote}}%
\setcounter{footnote}{\number\value{Breferencecounter}}%
\renewcommand{\thefootnote}{\arabic{footnote}}%
\refstepcounter{Breferencecounter}%
\footnotemark[\number\value{Breferencecounter}]%
\label{#1}%
\renewcommand{\thefootnote}{\fnsymbol{footnote}}%
\setcounter{footnote}{\number\value{tempcounter}}%
}
\newcommand{\Bref}[2][]{(#1\ref{#2}, p. \pageref{#2})}
\newcommand{\Bquote}[1]{{\em\enquote{#1}}}
\newcommand{\setcolumnlength}[1]{
	\ifdefined\columnlength
	\else
		\newlength{\columnlength}
	\fi
	\setlength{\columnlength}{#1}
}
\newcommand{\co}[1]{\ \hbox to \columnlength{#1 \hfill}}
\newcommand{\twoco}[2]{\ \hbox to \columnlength{#1 \hfill} \hbox to \secondcolumnlength{#2 \hfill}}
\newcommand{\setsecondcolumnlength}[1]{
	\ifdefined\secondcolumnlength
	\else
		\newlength{\secondcolumnlength}
	\fi
	\setlength{\secondcolumnlength}{#1}
}
\DeclareMathOperator{\SL}{SL}
\DeclareMathOperator{\opsl}{\mathfrak{sl}}
\renewcommand{\sl}{\opsl}
\DeclareMathOperator{\GL}{GL}
\DeclareMathOperator{\GLp}{GL^+}
\DeclareMathOperator{\SO}{SO}
\DeclareMathOperator{\OO}{O}
\DeclareMathOperator{\PSym}{PSym}
\DeclareMathOperator{\Sym}{Sym}
\DeclareMathOperator{\so}{\mathfrak{so}}
\DeclareMathOperator{\gl}{\mathfrak{gl}}
\newcommand{\R}{\mathbb{R}}
\newcommand{\N}{\mathbb{N}}
\newcommand{\Q}{\mathbb{Q}}
\newcommand{\Z}{\mathbb{Z}}
\newcommand{\Rnn}{\Rmat{n}{n}}
\newcommand{\GLpn}{\GLp(n)}
\newcommand{\SOn}{\SO(n)}
\newcommand{\On}{\OO(n)}
\newcommand{\Symn}{\Sym(n)}
\newcommand{\PSymn}{\PSym(n)}
\newcommand{\range}{\operatorname{range}}
\renewcommand{\div}{\operatorname{div}}
\DeclareMathOperator{\dev}{dev}
\DeclareMathOperator{\tr}{tr}
\newcommand{\id}{ {1\!\!\!\:1 } }
\newcommand{\matr}[1]{\begin{pmatrix} #1 \end{pmatrix}}
\newcommand{\signmatr}[1]{\begin{pmatrix*}[r] #1 \end{pmatrix*}}
\newcommand{\matrs}[1]{\left(\begin{smallmatrix} #1 \end{smallmatrix}\right)}
\newcommand{\inv}{^{-1}}
\newcommand{\norm}[1]{\Vert #1 \Vert}
\newcommand{\Bignorm}[1]{\Big\Vert #1 \Big\Vert}
\newcommand{\innerproduct}[1]{\langle #1 \rangle}
\DeclareMathOperator{\Cof}{Cof}
\DeclareMathOperator{\diag}{diag}
\DeclareMathOperator{\sym}{sym}
\newcommand{\afrac}[2]{#1\! /\! #2}
\newcommand{\tel}[1]{\frac{1}{#1}}
\newcommand{\half}{\tel{2}}
\newcommand{\eps}{\varepsilon}
\newcommand{\grad}{\nabla}
\newcommand{\nnl}{\nonumber\\}
\newcommand{\eqq}{\;=\;}
\renewcommand{\GLpn}{\GLp(3)}
\renewcommand{\Symn}{\Sym(3)}
\renewcommand{\PSymn}{\PSym(3)}
\renewcommand{\On}{\OO(3)}
\renewcommand{\SOn}{\SO(3)}
\renewcommand{\Rnn}{\R^{3\times3}}
\newcommand{\xhat}{\hat{x}}
\newcommand{\What}{\widehat W}
\newcommand{\Wtilde}{\widetilde W}
\newcommand{\lambdahat}{\hat \lambda}
\renewcommand{\GLpn}{\GLp(3)}
\renewcommand{\Symn}{\Sym(3)}
\renewcommand{\PSymn}{\PSym(3)}
\renewcommand{\On}{\OO(3)}
\renewcommand{\SOn}{\SO(3)}
\renewcommand{\Rnn}{\R^{3\times3}}
\theoremstyle{plain}
\newcounter{theoremCounter}
\numberwithin{theoremCounter}{section}
\newtheorem{lemma}[theoremCounter]{Lemma}
\newtheorem{proposition}[theoremCounter]{Proposition}
\newtheorem{corollary}[theoremCounter]{Corollary}
\theoremstyle{definition}
\newtheorem{remark}[theoremCounter]{Remark}
\renewcommand{\cos}{\,{\rm{cos}\,} }
\renewcommand{\tan}{\,{\rm{tan}\,} }
\renewcommand{\cot}{\,{\rm{cot}\,} }
\newcommand{\arccot}{\,{\rm{arccot}} }
\renewcommand{\arccos}{\,{\rm{arccos}} }
\makeatletter\@addtoreset{footnote}{page}
\def\Young{E}
\def\Poisson{\nu}
\def\bulk{K}
\def\shear{G}
\def\PKone{S_1}
\def\PKtwo{S_2}
\def\Cauchy{\sigma}
\def\Biot{T^{\mathrm{Biot}}}
\def\varYoung{{E^*}}
\def\e{e}
\def\Nu-xi{\xi}
\def\Kappa-eta{\eta}
\def\Bresultant{\mathcal{R}}
\def\Bnormal{\mathcal{N}}
\def\Btangent{\mathcal{T}}
\begin{document}

\changepage{-15mm}{25mm}{-10mm}{-5mm}{0mm}{-10mm}{0mm}{0mm}{0mm}
\thanksmarkseries{arabic}

\title{Rediscovering G.F. Becker's early axiomatic deduction of a multiaxial nonlinear stress-strain relation based on logarithmic strain}
\author{Patrizio Neff\thanks{Corresponding author.\: Head of Chair for Nonlinear Analysis and Modelling, University of Duisburg-Essen, Thea-Leymann-Str. 9, 45127 Essen, Germany, email: patrizio.neff@uni-due.de}, \;Ingo M\"unch\thanks{Institute for Structural Analysis, Karlsruhe Institute of Technology, Kaiserstr. 12, 76131 Karlsruhe, Germany, email: ingo.muench@kit.edu} \;\,and\; Robert Martin\thanks{Chair for Nonlinear Analysis and Modelling, University of Duisburg-Essen, Thea-Leymann-Str. 9, 45127 Essen, Germany, email: robert.martin@stud.uni-due.de}%
\vspace*{10mm}\\%
{\normalsize Dedicated to Ray Ogden on the occasion of his 70\textsuperscript{th} birthday.\vspace*{5mm}}%
}

\maketitle
\thispagestyle{empty}
\begin{abstract}
We discuss a completely forgotten work of the geologist G.F. Becker on the ideal isotropic nonlinear stress-strain function \cite{becker1893}. In doing this we provide the original paper from 1893 newly typeset in {\LaTeX} and with corrections of typographical errors as well as an updated notation.
Due to the fact that the mathematical modelling of elastic deformations has evolved greatly since the original publication we give a modern reinterpretation of Becker's work, combining his approach with the current framework of the theory of nonlinear elasticity.

Interestingly, Becker introduces a multiaxial constitutive law incorporating the logarithmic strain tensor, more than 35 years before the quadratic Hencky strain energy was introduced by Heinrich Hencky in 1929. Becker's deduction is purely axiomatic in nature. He considers the finite strain response to applied shear stresses and spherical stresses, formulated in terms of the principal strains and stresses, and postulates a principle of superposition for principal forces which leads, in a straightforward way, to a unique invertible constitutive relation, which in today's notation can be written as
\begin{equation*}
\begin{aligned}
	\Biot &= 2\,G\cdot\dev_3\log(U) + \bulk\cdot\tr[\log(U)]\cdot\id\\
	&= 2\,G\cdot\log(U) + \Lambda\cdot\tr[\log(U)]\cdot\id\,,
\end{aligned}
\end{equation*}
where $\Biot$ is the Biot stress tensor, $\log(U)$ is the principal matrix logarithm of the right Biot stretch tensor $U=\sqrt{F^TF}$, $\tr X = \sum_{i=1}^3 X_{i,i}$ denotes the trace and $\dev_3 X = X - \frac13 \tr(X) \cdot \id$ denotes the deviatoric part of a matrix $X\in\R^{3\times3}$.

Here, $G$ is the shear modulus and $K$ is the bulk modulus. For Poisson's number $\nu=0$ the formulation is hyperelastic and the corresponding strain energy
\[
	\WBecker(U) = 2\,G\; [\,\iprod{U,\,\log(U) - \id} + 3\,]
\]
has the form of the maximum entropy function.
\end{abstract}

\newpage
\tableofcontents
\thispagestyle{empty}

\newpage
\changepage{15mm}{-25mm}{10mm}{5mm}{0mm}{10mm}{0mm}{0mm}{0mm}

\renewcommand{\ln}{\log}
\newpage
\markboth{}{}
\changetext{-25mm}{35mm}{-15mm}{-10mm}{0mm}
\newif\ifshowimages\showimagesfalse
\newif\ifshowimages\showimagestrue
\newpage
\captionsetup{justification=raggedright}
\captionsetup{format=hang}
\section{Introduction}
\subsection{Some reflections on constitutive assumptions in nonlinear elasticity}
\label{section:reflectionsOnConstitutiveAssumptions}

The question of proper constitutive assumptions in nonlinear elasticity has puzzled many generations of researchers.
The problem of finding simple enough constitutive assumptions which are sufficient to characterize a physically plausible behaviour of \emph{\enquote{completely elastic}} materials was even called \emph{\enquote{das ungel\"oste Hauptproblem der endlichen Elastizit\"atstheorie}} (the unsolved main problem of finite elasticity theory) by C. Truesdell \cite{truesdell1956}. While such assumptions can only lead to an \emph{idealized} material behaviour, the merits of such an ideal model were already described by H. Hencky in his 1928 article \emph{On the form of the law of elasticity for ideally elastic materials} %
\cite{hencky1928, henckyTranslation}:

\begin{quote}
{\em Like so many mathematical and geometric concepts, it is a useful ideal, because once its deducible properties are known it can be used as a comparative rule for assessing the actual elastic behaviour of physical bodies. {\rm [\dots]} While it is certainly a matter of empirical observation to determine how actual materials compare to the ideally elastic body, the law itself acts as a measuring instrument which is extended into the realm of the intellect, making it possible for the experimental researcher to make systematic observations.}%
\end{quote}%

The \emph{range of applicability} of such an idealized response, however, must necessarily be restricted to minute strains, perhaps in the order of $1\%$, for otherwise we are to expect interference with non-elastic effects like plastic deformations, microstructural instabilities or bifurcations. Nonetheless, it should be formulated tensorially correct for arbitrarily large strains. It is also clear that the restriction to small elastic strains does not imply that one can use linear elasticity theory, nor that the ideal elastic response for larger stresses or strains is arbitrary. On the contrary, our idealization should work, as an ideal model, for arbitrarily large strains.

In the past, a large number of possible basic assumptions for elastic materials have been suggested in order to respond to the idealization described above. Among the most commonly accepted are:
\begin{itemize}
	\item hyperelasticity: the existence of a strain energy function $W$,
	\item homogeneity: the strain energy $W$ does not depend on the position in the body,
	\item simple material: the strain energy $W=W(F)$ depends only on the first deformation gradient $F$,
	\item objectivity: $W(Q\cdot F) = W(F)$ for all $Q\in\SO(3)$,
	\item isotropy: $W(F\cdot Q) = W(F)$ for all $Q\in\SO(3)$,
	\item unique (up to rotations) stress-free reference state $U=\sqrt{F^TF}=\id$,
	\item linearization consistent with linear elasticity theory at the reference state,
	\item well-posedness of the corresponding linear elasticity model in statics and dynamics,
	\item correct stress response for extreme strains: $\sigma\to\infty$ as $V=\sqrt{FF^T}\to\infty$ as well as $\sigma\to-\infty$ as $\det(V)\to0$, where $\sigma$ denotes the Cauchy stress tensor,
	\item second-order behaviour in agreement with Bell's experimental observations \cite{bell1973}, i.e. the instantaneous elastic modulus $E$ decreases for tension and increases in the case of compression (c.f. Fig. \ref{figure:thirdOrderCurve}, which shows the unsuitability of the Saint-Venant-Kirchhoff model),
	\item correct energetic behaviour for extreme strains in order to ensure invertibility of the deformation gradient $F$: $W\to\infty$ for $\norm{F}\to\infty$ as well as $W\to\infty$ for $\det(F)\to0$,
	\item polyconvexity \cite{ball1977, Schroeder_Neff01, schroeder2008, schroeder_neff2010, ebbing2009construction, ebbing2009approximation}, quasiconvexity \cite{morrey1952, schroeder_neff2010},
	\item Legendre-Hadamard-ellipticity \cite{NeffGhibaLankeit},
	\item Baker-Ericksen inequalities \cite{bakerEri54}.
\end{itemize}
\begin{figure}[t]
	\centering
	\begin{tikzpicture}[scale=1]
		\ifshowimages
			\input{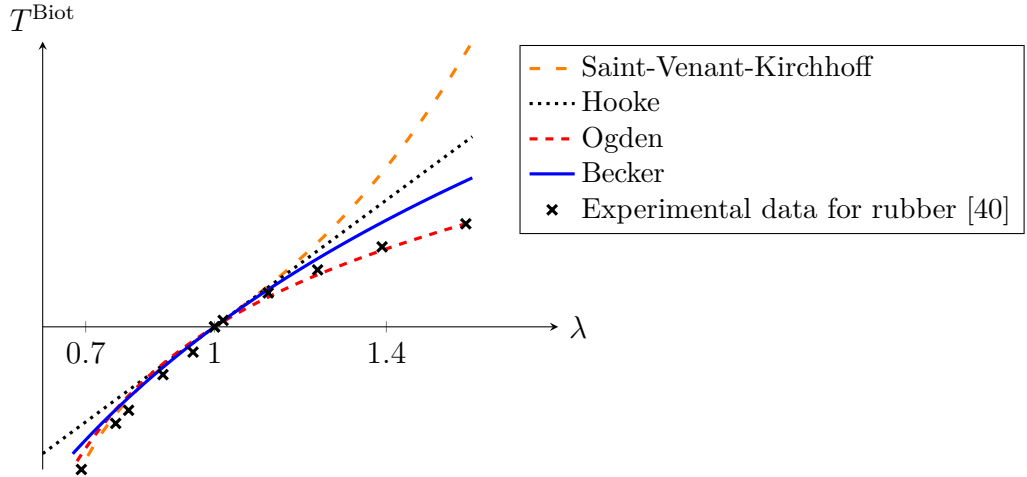}
		\fi
	\end{tikzpicture}
	\caption{Nonlinear behaviour of incompressible elastic materials in a simple tensile stress test, drawing the Biot stress $\Biot$ versus the principal stretch $\lambda$.}
	\label{figure:thirdOrderCurve}
\end{figure}
Apart from these conditions there are several properties which, while not generally viewed as necessary for an elasticity model, may be considered as constitutive assumptions for an \emph{idealized} material as well:
\begin{itemize}
\item superposition principle: $T(V_1\cdot V_2) = T(V_1) + T(V_2)$ for all \emph{coaxial} stretches $V_1$ and $V_2$ and some corresponding stress tensor $T$;
\item invertible stress-strain relation: the mapping $E\mapsto T(E)$ is invertible for some stress tensor $T$ and a corresponding work conjugate strain tensor $E$ (if $T$ is the Cauchy stress tensor, then this invertibitliy condition is satisfied e.g. for a variant of the compressible Neo-Hooke energy \cite{ghiba2014}; if T is the Biot stress tensor, then this condition is Truesdell's invertible force stretch (IFS) relation \cite[p. 156]{truesdell65});
\item tension-compression symmetry: $T(V\inv)=-T(V)$ for some stress tensor $T$ (note that the classical hyperelastic tension-compression symmetry $W(F)=W(F\inv)$ is equivalent to $\tau(V\inv)=-\tau(V)$ for the Kirchhoff stress $\tau$ and the left stretch tensor $V$);
\item plausible behaviour under simple homogeneous finite stresses (similar to linear elasticity):
\begin{itemize}
\item pure shear stresses of the form $T=\matrs{0&s&0\\s&0&0\\0&0&0}$ should induce stretches of the form $V=\matrs{B_{11}&B_{12}&0\\B_{12}&B_{22}&0\\0&0&1}$ with $\det\matrs{B_{11}&B_{12}\\B_{12}&B_{22}} = 1$,
\item spherical stresses of the form $T=\matrs{a&0&0\\0&a&0\\0&0&a}$ should induce volumetric stretches of the form $V=\matrs{\lambda&0&0\\0&\lambda&0\\0&0&\lambda}$;
\end{itemize}
\unitlength1.0mm
\begin{minipage}{.45\textwidth}
\begin{figure}[H]\centering
 \begin{picture}(90,25)
  \put(0,0){\epsfig{file=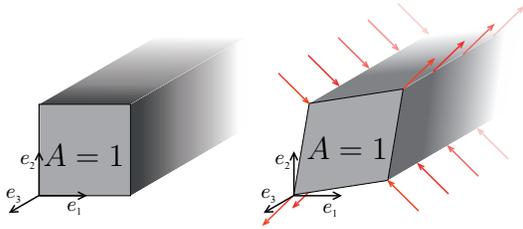, height=30mm, angle=0}}
  \put(5,8.1){$A=1$}
  \put(40,9){$A=1$}
 \end{picture}
 \caption{Pure shear stress should induce pure shear stretch, preserving the area $A$.}
 \label{figure:pureshear3D}
\end{figure}
\end{minipage}%
\hspace*{4mm}%
\begin{minipage}{.45\textwidth}
\begin{figure}[H]\centering
 \begin{picture}(90,25)
  \put(7,0){\epsfig{file=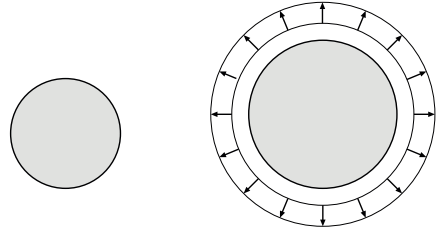, height=30mm, angle=0}}
 \end{picture}
 \caption{Spherical stress should induce purely volumetric stretch.}
 \label{figure:sphericalStress}
\end{figure}
\end{minipage}
\unitlength0.357mm
\item ordered stresses (\enquote{greater stress corresponds to greater stretch}): $(T_i-T_j)\cdot(\lambda_i-\lambda_j)>0$ for all $\lambda_i\neq\lambda_j$ and some stress tensor $T$, where $T_k$ are the principal stresses, i.e. the principal values of $T$, and $\lambda_k$ are the principal stretches (note that the Baker-Ericksen inequality can be stated as $(\sigma_i-\sigma_j)\cdot(\lambda_i-\lambda_j)>0$ for all $\lambda_i\neq\lambda_j$ where $\sigma$ is the Cauchy stress tensor);
\item simple volumetric-isochoric decoupling to ensure a suitable formulation of the incompressibility constraint,
\item minimal number of physically motivated and experimentally identifiable constitutive coefficients, e.g. only the two isotropic Lam\'e constants,
\item clear physical interpretation of Poisson's number $\nu$ for finite deformations: $\nu=\frac12$ enforces exact incompressibility ($\det F = 1$) and $\nu=0$ implies no lateral contraction under uniaxial tension (as in linear elasticity),
\item greatest possible extent of elastic determinacy \cite[p. 19]{henckyTranslation}: the stress response should not depend on a specific reference state or previously applied deformations; a similar condition was proposed by Murnaghan \cite{murnaghan1941,murnaghan1944}, who argued that the dependence of the stress response on a specific \enquote{position of zero strain} was tantamount to an \enquote{action at a distance} and should therefore be avoided.
\end{itemize}
Several attempts to propose such an idealized model of elasticity can be found in the literature. Becker's deduction can be seen as an early example of such an attempt.

\subsection{A modern interpretation of Becker's development}
Becker, in his development of a nonlinear law of elasticity, rejects many of his contemporaries' approaches to the problem of finite elasticity. He starts his introduction with a description of Hooke's law, stating that apart from its original formulation (\Bquote{Strain is proportionate to the load, or the stress initially applied to an unstrained mass} \Bref{becker:originalHooke}) it is often interpreted in a different way (\Bquote{Strain is proportional to the final stress required to hold a strained mass in equilibrium} \Bref{becker:alternativeHooke}). These two different interpretations of Hooke's law for finite deformations\footnote{The different possibilities of a Hookean law for finite deformations have been discussed in \cite{xiao2011} and \cite{batra1998}.} can be expressed as
\begin{equation}\label{eq:hookeOriginal}
	\begin{aligned}
		\Biot &= 2\,G\cdot\dev_3(U-\id) + \bulk\cdot\tr[U-\id]\cdot\id\\
		&= 2\,G\cdot(U-\id) + \Lambda\cdot\tr[U-\id]\cdot\id
	\end{aligned}
\end{equation}
and
\begin{equation}\label{eq:hookeAlternative}
	\begin{aligned}
		\sigma &= 2\,G\cdot\dev_3(V-\id) + \bulk\cdot\tr[V-\id]\cdot\id\\
		&= 2\,G\cdot(V-\id) + \Lambda\cdot\tr[V-\id]\cdot\id
	\end{aligned}
\end{equation}
respectively, where $U = \sqrt{F^T F}$ is the right Biot stretch tensor, $V = \sqrt{FF^T}$ is the left Biot stretch tensor, $\sigma$ is the Cauchy stress tensor, $\Biot$ is the Biot stress tensor, $\dev_3 X = X-\frac13\tr(X)\cdot \id$ denotes the deviatoric part of $X\in\R^{3\times3}$, $\bulk$ is the bulk modulus and $G,\Lambda$ are the Lam\'e constants.
According to Becker, it was already \Bquote{universally acknowledged that either law {\rm [\eqref{eq:hookeOriginal}, \eqref{eq:hookeAlternative}]} is applicable  only to strains so small that their squares are negligible} \Bref{becker:hookeRejection}. He gives a number of reasons for this rejection of Hooke's law as a model for finite deformations, including the fact that it allows for infinite distortions ($\det F = 0$) under finite stresses \Bref{becker:hookeSingularity}. In addition, Becker states that Hooke's law \Bquote{rests entirely upon experiment}, i.e. that there is no underlying framework necessitating the linearity of the stress-strain relation. However, Becker also rejects the idea that the stress response could be discovered by \Bquote{any process of pure reason} alone\footnote{As a proponent of the works of Immanuel Kant \cite{becker1898}, it is consequential that Becker rejects the purely empiricist approach as well the rationalist one.}, an approach which he attributes to Barr\'e de Saint-Venant. In fact, in Becker's time the elasticity models that had been developed through purely geometrical considerations generally implied the so-called \emph{Cauchy relations} \cite{hehl2002,Neff_Jeong_IJSS09, grioli2013}, i.e. they determined the lateral contraction independent of the specific material, corresponding to a fixed value $\nu=\frac14$ for Poisson's number $\nu$. As Becker points out in a footnote later on \Bref{becker:cauchyHypothesis}, this value for $\nu$ should only be regarded as a special case and not as a general law\footnote{This model, with a stress-strain law of the form\[\sigma \;=\; G\,(V-\id)+\tfrac{G}{2}\tr(V-\id)\cdot\id \;=\; G\,[(V-\id)+\tfrac12\tr(V-\id)\cdot\id]\,,\] is also called the \emph{rari-constant} theory of isotropic elasticity. The elastic behaviour of many materials, including metal, can not be described accurately by this one-parameter model.}.

Instead, his approach to describe the deformation of an ideally elastic body can be summarized as follows: motivated by geometric considerations he postulates a connection between \emph{shear stresses} and \emph{shear strains} as well as between \emph{volumetric stresses} and \emph{dilational strains}. He then shows that every homogeneous finite deformation can be decomposed into two shear stretches and a purely dilational deformation. Finally Becker assumes that a \emph{law of superposition} holds for all coaxial finite strains, allowing him to reduce the problem of a general stress-stretch relation to shears and dilations only.

Thus Becker makes a number of \emph{assumptions} about the stress-stretch relation from which he then \emph{deduces} a law of elasticity. His final result is a stress-stretch relation which in today's notation can be written as
\begin{equation*}
\boxed{\begin{aligned}
	\Biot &= 2\,G\cdot\dev_3\log(U) + \bulk\cdot\tr[\log(U)]\cdot\id\\
	&= 2\,G\cdot\log(U) + \Lambda\cdot\tr[\log(U)]\cdot\id\,,
\end{aligned}}
\end{equation*}
where $\log(U)$ is the principal matrix logarithm of the right Biot stretch tensor $U=\sqrt{F^TF}$.

In order to reproduce Becker's approach in a more modern framework of elasticity theory we will therefore interpret Becker's implicit assumptions as \emph{axioms} for a law of ideal elasticity. While sections \ref{section:assumptions} and \ref{section:geometryOfShear} summarize Becker's motivation for these axioms as well as some of his computations, a generalized deduction of Becker's law of elasticity will be given in section \ref{section:deduction}. Finally, we will investigate some basic properties of the resulting stress-stretch relation in section \ref{section:properties}.

While the axioms are in fact sufficient to completely characterize an isotropic stress-stretch relation uniquely up to two material parameters, it turns out that some of them may be weakened considerably without changing the result.

\section{Becker's assumptions}\label{section:assumptions}
In order to understand Becker's approach it is important to distinguish his (often implicitly stated) assumptions from his deduced results. Like Becker, we consider a unit cube in the reference configuration $\Omega_0$ the edges of which are aligned with an orthogonal coordinate system $e_1,e_2,e_3$. Unless indicated otherwise, all matrix representations of linear mappings are given with respect to this coordinate system.

\subsection{Basic assumptions}
Becker's most basic assumptions are that the stress-stretch relation is an analytic function \Bref{becker:analyticity} as well as isotropic\footnote{By isotropy, Becker means the absence of any directional information. He does not have a representation theorem for isotropic tensor functions at his disposal; note that J. Finger's influential monograph on isotropic nonlinear elasticity in terms of the three principal invariants \cite{finger1894} was not published until 1894.} \Bref{becker:isotropy} and that it is possible to \Bquote{regard strains as functions of load} \Bref{becker:invertibility}, i.e. that the strain-load mapping is invertible.
Note that here and throughout we will interpret Becker's \enquote{initial stress} (which, for a unit cube, is equal to the load) as the \emph{Biot stress tensor} \cite{curnier1991}
\[
	\Biot \;=\; U\cdot\PKtwo \;=\; J\cdot R^T \cdot \sigma \cdot F^{-T}\,,
\]
where $F$ denotes the deformation gradient, $F = R\,U$ is the polar decomposition \cite{Lankeit2014} of $F$ with $R\in\SOn$ and $U=\sqrt{F^TF}\in\PSymn$,\, $J=\det F = \det U$ is the Jacobian determinant, $\sigma$ is the Cauchy stress tensor and $\PKtwo$ is the symmetric second Piola-Kirchhoff stress tensor. A justification of this interpretation can be found in Appendix \ref{appendix:stressTensors}. Note that in the isotropic case, $\Biot$ is a symmetric tensor as well \cite{Neff_Biot07}.

\subsection{Pure finite shear}\label{section:pureFiniteShearAssumption}
One of Becker's main assumptions is that \Bquote{a simple finite shearing strain must result from the action of two equal loads or initial stresses of opposite signs at right angles to one another} \Bref{becker:shearAxiom}. This assumption is mostly motivated by geometric considerations: if the deformation is a homogeneous pure shear of the form
\begin{equation}\label{eq:shearDeformationDefinition}
	F = \dmatr{\alpha}{\frac1\alpha}{1}\,, \quad \alpha>1\,,
\end{equation}
then the so-called \emph{planes of no distortion} have a number of properties connected to the quantity $\alpha$. Becker then relates these properties of strain to certain properties of stress, thereby establishing that the stress corresponding to the above shear deformation must be a pure shear stress of the form
\begin{equation}\label{eq:shearStressDefinition}
	\Biot = \dmatr{s}{-s}{0}\,, \quad s\in\R\,.
\end{equation}
His arguments are largely based on the assumption that \Bquote{[in] the particular case of a shear (or a $\it{pure}$ shear) there are two sets of planes on which the stresses are purely tangential, for otherwise there could be no planes of zero distortion} \Bref{becker:planesOfNoDistortionAndTangentialStrain}.

\unitlength1.0mm
\begin{figure}[H]\centering
 \begin{picture}(100,37)
  \put(0,0){\epsfig{file=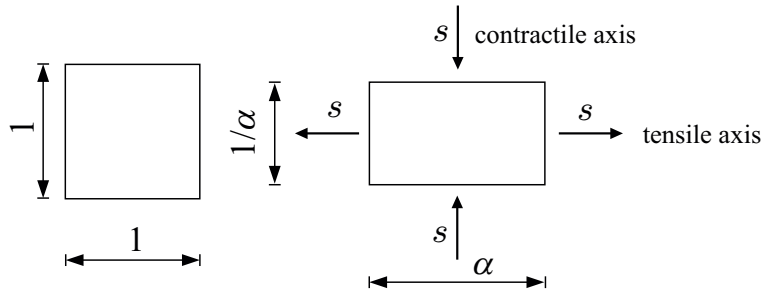, width=100mm, angle=0}}
 \end{picture}
 \caption{Pure shear load and the corresponding shear deformation.}
 \label{figure:pureShear}
\end{figure}
\unitlength0.357mm
If a a Biot shear stress of the form \eqref{eq:shearStressDefinition} corresponds to a pure shear deformation of the form \eqref{eq:shearDeformationDefinition}, then the corresponding Cauchy stress tensor $\sigma$ computes to
\begin{equation}\label{eq:shearCauchyStressGeneral}
	\sigma \;=\; \frac{1}{\det U}\cdot U\inv\cdot F\cdot \Biot\cdot F^T \;=\; 1\cdot F\inv\cdot F \cdot \Biot \cdot F \;=\; \dmatr{\alpha\,s}{-\frac{s}{\alpha}}{0}\,.
\end{equation}
The principal Cauchy stresses are therefore $\sigma_1 = \alpha\,s$, $\sigma_2 = -\frac{s}{\alpha}$ and $\sigma_3 = 0$. Note carefully that Becker's shear deformation is oriented differently: in his considerations, the contractile axis (along the eigenvector to the smaller eigenvalue $\frac1\alpha$) is aligned with the $e_1$-axis. This difference is reflected in Becker's formula $-\,\Cauchy_{1}\, \alpha = \Cauchy_{2}/\alpha$ \; \Bref{becker:cauchyStressShearFormula}. For our choice of axes, the corresponding equality reads
\[
	-\,\Cauchy_{2}\, \alpha = \Cauchy_{1}/\alpha\,.
\]

A more detailed geometric description of this relation will be given in section \ref{section:geometryOfShear}. More recent discussions of shear stresses and shear strains can be found in \cite{destrade2012} and \cite{norris2006}. The so-called \emph{planes of no distortion} also play an important role in Becker's treatment of the rupture of rocks \cite{becker1892} as well as his later works on schistosity and slaty cleavage \cite{becker1904, becker1907}, where properties of the planes of no distortion are linked to failure criteria for deformations beyond the range of elastic deformations. A summary of Becker's work on yield criteria for rocks can be found in \cite{turner1942}, while the concept of planes of no distortion and its relation to the tangential shear strain is described in a more detailed form in \cite{griggs1935}.

\subsubsection{The planes of no distortion}\label{section:planesOfNoDistortion}
Let $F\in\GLpn$ denote an invertible linear mapping. We call a plane $\iplane\subset\R^3$ through the origin an \emph{initial plane of no distortion} if the restriction of $F$ to $\iplane$ is a rotation, which is the case if and only if $\innerproduct{Fx,Fy} = \innerproduct{x,y}$ for all $x,y\in\iplane$ or, equivalently, if $\norm{Fx} = \norm{x}$ for all $x\in\iplane$, where $\innerproduct{\cdot,\cdot}$ denotes the Euclidean inner product on $\R^3$. Furthermore, we call $\fplane\subset\R^3$ a \emph{final plane of no distortion} if $\fplane$ is the image under $F$ of an initial plane of no distortion, which is the case if and only if $\norm{F\inv x} = \norm{x}$ for all $x\in\fplane$. Since Becker's considerations of such planes are confined to the deformed (or final) configuration of a homogeneous deformation, we will often refer to $\fplane$ simply as a \emph{plane of no distortion}.\\
\unitlength1.0mm
\begin{figure}[H]\centering
 \begin{picture}(130,45)
  \put(0,0){\epsfig{file=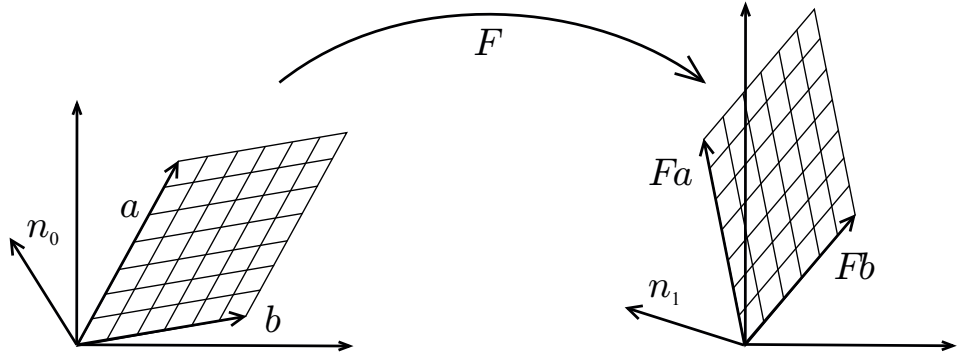, height=47mm, angle=0}}
 \end{picture}
 \caption{An initial plane of no distortion is only rotated by the linear mapping $F$, preserving the angles between two vectors as well as their lengths.}
 \label{figure:pondrotation}
\end{figure}
\unitlength0.357mm
The following basic existence property can be found in \cite{griggs1935}.

\begin{proposition}\label{prop:planesOfNoDistortionExistence}
Let $F\in\GLpn$. Then there exists a plane of no distortion for $F$ if and only if $\lambda_2 = 1$, where $\lambda_1 > \lambda_2 > \lambda_3$ denote the \emph{singular values} of $F$.
\end{proposition}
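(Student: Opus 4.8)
The plan is to translate the geometric condition into a statement about the quadratic form $x \mapsto \norm{Fx}^2 - \norm{x}^2$. First I would observe that, since $F \in \GLpn$ is invertible, $\iplane \mapsto F(\iplane)$ is a bijection between initial and final planes of no distortion; hence a plane of no distortion for $F$ exists if and only if there is a two-dimensional subspace $\iplane \subset \R^3$ with $\norm{Fx} = \norm{x}$ for all $x \in \iplane$. Writing $C = F^TF \in \PSymn$, whose eigenvalues are $\lambda_1^2 > \lambda_2^2 > \lambda_3^2$, this condition is exactly $\innerproduct{(C - \id)\,x,\,x} = 0$ for all $x \in \iplane$.

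Next I would diagonalize: pick an orthonormal eigenbasis $e_1, e_2, e_3$ of $C$, so that in these coordinates the condition on a two-plane $\iplane$ becomes $q(x) := a_1 x_1^2 + a_2 x_2^2 + a_3 x_3^2 = 0$ for all $x \in \iplane$, where $a_i := \lambda_i^2 - 1$ and $a_1 > a_2 > a_3$. The proposition thus reduces to: $q$ vanishes on some two-dimensional subspace of $\R^3$ if and only if $a_2 = 0$.

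For the direction $\lambda_2 = 1 \Rightarrow$ existence: if $a_2 = 0$ then $a_1 > 0 > a_3$, the numbers $\sqrt{a_1}$ and $\sqrt{-a_3}$ are real, and a direct computation shows $q$ vanishes on the two-plane $\iplane = \operatorname{span}\{\, e_2,\ \sqrt{-a_3}\,e_1 + \sqrt{a_1}\,e_3 \,\}$ (the two vectors are linearly independent, and $q(s\,e_2 + t\,(\sqrt{-a_3}\,e_1 + \sqrt{a_1}\,e_3)) = a_1(-a_3)\,t^2 + a_3 a_1\,t^2 = 0$); pulling this plane back through the orthonormal eigenbasis and pushing it forward by $F$ produces a plane of no distortion. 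For the converse I would use that two planes through the origin in $\R^3$ always meet in at least a line. If $a_2 > 0$ then also $a_1 > 0$; given any two-plane $\iplane$, choose $0 \neq x \in \iplane \cap \{x_3 = 0\}$, so that $(x_1, x_2) \neq (0,0)$ and $q(x) = a_1 x_1^2 + a_2 x_2^2 > 0$, whence $q$ does not vanish on $\iplane$. Symmetrically, if $a_2 < 0$ then $a_3 < 0$, and choosing $0 \neq x \in \iplane \cap \{x_1 = 0\}$ gives $q(x) = a_2 x_2^2 + a_3 x_3^2 < 0$. In either case no plane of no distortion exists, so existence forces $a_2 = 0$, i.e. $\lambda_2 = 1$.

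I do not anticipate a real obstacle; the only points to handle carefully are the opening reduction (that it suffices to find an \emph{initial} plane of no distortion, and that the condition there is precisely the vanishing of $q$) and bookkeeping the strict ordering $\lambda_1 > \lambda_2 > \lambda_3$, which is exactly what guarantees $a_1 > 0$ whenever $a_2 \geq 0$ and $a_3 < 0$ whenever $a_2 \leq 0$ — the sign facts used in both halves of the argument.
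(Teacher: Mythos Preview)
Your argument is correct and complete. The paper itself does not prove this proposition; it simply attributes the result to Griggs \cite{griggs1935} and then carries out explicit computations only in the special case of a pure shear $F=\diag(\alpha,\alpha^{-1},1)$. Your reduction to the vanishing of the quadratic form $q(x)=\innerproduct{(C-\id)x,x}$ on a two-plane, together with the intersection argument (any two-plane meets the coordinate plane $\{x_3=0\}$, respectively $\{x_1=0\}$, in a line) for the converse, gives a clean self-contained proof that the paper does not supply. The only cosmetic point is that the phrase ``pulling this plane back through the orthonormal eigenbasis'' is a bit awkward: the plane you construct \emph{is} already an initial plane of no distortion (expressed in eigenbasis coordinates), and its image under $F$ is the final plane of no distortion.
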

\begin{remark}
If $\lambda_2 \neq 1$ for the second singular value $\lambda_2$ of a linear mapping $F\in\GLpn$, then instead of a plane there exists a \emph{surface of no distortion} in the form of an elliptical cone \cite[p. 133]{griggs1935} instead of a plane of no distortion.
If, however, we generalize the term to mean any plane $\fplane$ such that the restriction of $F$ to $\fplane$ is a \emph{dilated rotation} (also called a \emph{conformal mapping}) of the form $\lambda\cdot Q$ with $\lambda\in\R^+$ and $Q\in\SOn$, then Proposition \ref{prop:planesOfNoDistortionExistence} shows that such a plane exists for $F\in\GLpn$ if and only if the dilational factor $\lambda$ is the second singular value of $F$.
\end{remark}
\begin{remark}
It was shown by J. Ball and R.D. James \cite[Proposition 4]{ball1987} that the equality $\lambda_2 = 1$ holds if and only if the right Cauchy-Green tensor $C = F^TF$ corresponding to $F$ is expressible in the form
\[
	C \;=\; (\id + \xi\otimes\eta)\cdot(\id + \eta\otimes\xi) \;=\; (\id + \eta\otimes\xi)^T\cdot(\id + \eta\otimes\xi)
\]
with $\xi,\eta\in\R^3$, where $\xi\otimes\eta \in \Rnn$ denotes the tensor product of $\xi$ and $\eta$. Thus there exists a plane of no distortion for $F\in\GLpn$ if and only if there exists a rank-one tensor $H\in\Rnn$ with
\[
	F^TF \;=\; C \;=\; (\id+H)^T\cdot(\id+H)\,.
\]
\end{remark}
Since Becker only considers planes of no distortion for pure shear deformations, we will assume from now on that $F$ has the form
\begin{equation}\label{eq:shearDeformation}
	F \;=\; \dmatr{\alpha}{\frac1\alpha}{1}
\end{equation}
with $\alpha > 1$. Then clearly $\lambda_2=1$, hence there are two distinct planes of no distortion which can be determined by direct computation: for $x = (x_1,x_2,x_3)^T \in\R^3$ with $\norm{x}=1$ we can use the equalities
\[
	x_1^2+x_2^2+x_3^2 = \norm{x}^2 \qquad \Longrightarrow \qquad x_3^2 = \norm{x}^2-x_1^2-x_2^2
\]
to find
\[
	\quad\norm{Fx}^2 \;=\; \alpha^2 x_1^2 \,+\, \frac{1}{\alpha^2}\,x_2^2 \,+\, x_3^2 \;=\; (\alpha^2-1)x_1^2 \,+\, \left(\frac{1}{\alpha^2} - 1\right)x_2^2 \,+\, \norm{x}^2\,.
\]
Thus the equality $\norm{Fx}=\norm{x}$ is equivalent to
\begin{align}
	&\norm{x}^2 \;=\; (\alpha^2-1)x_1^2 \,+\, \left(\frac{1}{\alpha^2} - 1\right)x_2^2 \,+\, \norm{x}^2\nnl
	\Longleftrightarrow\quad &\left(1 - \frac{1}{\alpha^2}\right)x_2^2 \;=\; (\alpha^2-1)\,x_1^2 \quad\Longleftrightarrow\quad x_2^2 \;=\; \alpha^2\,x_1^2\,,
\end{align}
hence every $x\in\R^3$ with $\norm{Fx}=\norm{x}$ is of the form $x=(s,\, \pm \alpha\,s,\, t)^T$ with $s,t\in\R$. Since the final directions of no distortion are the images of those vectors, they in turn have the form $y = Ax = (\pm \alpha\,s,\, s,\, t)^T$. Therefore, for every shear deformation of the form \eqref{eq:shearDeformation} with $\alpha>1$, there are two initial planes of no distortion,
\begin{equation}
	\iplane^+ \;=\; \Big\{\vect{x_1}{x_2}{x_3} \,:\, x_2 = \alpha\cdot x_1 \Big\} \qquad \text{and} \qquad \iplane^- \;=\; \Big\{\vect{x_1}{x_2}{x_3} \,:\, x_2 = -\alpha\cdot x_1 \Big\}\,,
\end{equation}
as well as two final planes of no distortion
\begin{equation}
	\fplane^+ \;=\; \Big\{\vect{x_1}{x_2}{x_3} \,:\, x_1 = \alpha\cdot x_2 \Big\} \qquad \text{and} \qquad \fplane^- \;=\; \Big\{\vect{x_1}{x_2}{x_3} \,:\, x_1 = -\alpha\cdot x_2 \Big\}\,.
\end{equation}
To verify Becker's claim that \Bref{becker:planesOfNoDistortion}
\begin{quote}
\Bquote{[in] a finite shearing strain of ratio $\alpha$, it is easy to see that the normal to the planes of no distortion makes an angle with the contractile axis of shear the cotangent of which is $\alpha$},
\end{quote}
we consider the direction of the contractile axis along the eigenvector $(0,1,0)^T=e_2$ corresponding to the smallest eigenvalue $\frac1\alpha$. Then the cotangent of the angle between this axis and a vector $\hat{n}=(y_1,y_2,y_3)^T$ with $y_1\neq0$ is given by $\frac{y_2}{y_1}$. %
Since
\begin{equation}\label{eq:normalsToPOND}
	n^+ = (1,-\alpha,0)^T \quad\text{ and }\quad n^- = (1,\alpha,0)^T
\end{equation}
are normal vectors to the planes of no distortion $\fplane^+$ and $\fplane^-$, respectively, the cotangent of the angle between these normals and the contractile axis is indeed given by $\pm\alpha$. Similarly, it is easy to see that the normals to the initial planes of no distortion $\iplane^+$ and $\iplane^-$ form angles of cotangent $\pm\frac1\alpha$ with the contractile axis.

This definition of the planes of no distortion is also consistent with the definition by means of the shear ellipsoid\footnote{The shear ellipsoid is also discussed by Becker in the context of hyperbolic functions \cite[p. xxxii]{becker1909hyperbolic}.}
given by C.K. Leith\footnote{Equivalent, but more implicit definitions are given by Becker \cite{becker1892} and Griggs \cite{griggs1935}} in \emph{Structural geology} \cite{leith1913}:
\begin{quote}
	\emph{In a strain ellipsoid with three unequal principal axes there are only two cross-sections which are circular in outline [\dots] These planes [\dots] are called \enquote{planes of no distortion} because they preserve a circular cross-section similar to a section of the original sphere\dots}
\end{quote}
The strain ellipsoid of the deformation with principal stretches $\alpha$, \,$\alpha\inv$ and $1$ is defined by the equation
\[
	\frac{x_1^2}{\alpha^2} + \frac{x_2^2}{\alpha^{-2}} + x_3^2 = 1\,,
\]
where $x_1$ and $x_2$ denote coordinates with respect to the tensile axis and the contractile axis in pure shear respectively. The planes $\fplane^+$ and $\fplane^-$ are characterized by the equation $x_1 = \pm\alpha\,x_2$, thus we can verify that their intersections with the ellipsoid are indeed circles of radius 1 centred at the origin $(0,0,0)^T$:
\begin{align*}
	\frac{x_1^2}{\alpha^2} + \frac{x_2^2}{\alpha^{-2}} + x_3^2 &= 1 \;\wedge\; x_1 = \pm\alpha\,x_2\\
	\quad\Longrightarrow \quad \norm{\matrs{x_1\\x_2\\x_3}-\matrs{0\\0\\0}} &= x_1^2 + x_2^2 + x_3^2 = x_1^2 + x_2^2 + 1 - \frac{x_1^2}{\alpha^2} - \alpha^2 x_2^2\\ &= x_1^2 + x_2^2 + 1 - x_2^2 - x_1^2 \;=\; 1\,.
\end{align*}

\subsubsection{Different characterizations of the planes of no distortion}
The initial planes of no distortion can also be characterized in a number of different ways, for example
by means of the cofactor matrix $\Cof F$: If the restriction of $F$ to a plane is a rotation, then
\[
	\innerproduct{Fx,\, Fy} = \innerproduct{x,\, y}
\]
for all $x,y$ in the plane. Then for all $x,y$ in the plane with $\innerproduct{x,y}=0$ we find
\[
	\norm{Fx \times Fy}^2 = \norm{Fx}^2\cdot\norm{Fy}^2 - 2\cdot\innerproduct{Fx,\, Fy} = \norm{x}^2\cdot\norm{y}^2 - 2\cdot\innerproduct{x,\, y} = \norm{x}^2\cdot\norm{y}^2\,.
\]
Now let $n$ denote a unit normal vector to the plane of no distortion. Then $n$ can be represented as $n=x\times y$ with unit vectors $x,y$ in the plane and $\innerproduct{x,\,y}=0$. Since, in general,
\[
	(\Cof F)\,(x\times y) = Fx \,\times Fy\,,
\]
we find
\[
	\norm{(\Cof F)\,n}^2 = \norm{(\Cof F)\,(x\times y)}^2 = \norm{Fx \,\times Fy}^2 = \norm{x}^2\cdot\norm{y}^2 = 1\,.
\]
Therefore the equality $\norm{(\Cof F)\,n} = 1$ is a necessary condition for a unit normal vector $n$ to be the normal to an initial plane of no distortion. We compute
\[
	\Cof F \;=\; \det(F) \cdot F^{-T} \;=\; \alpha\cdot\frac1\alpha \cdot \dmatr{\alpha}{\frac1\alpha}{1}\inv \;=\; \dmatr{\frac1\alpha}{\alpha}{1}
\]
as well as
\begin{align*}
	\norm{(\Cof F)\,n}^2 \;&=\; \frac{n_1^2}{\alpha^2} + \alpha^2n_2^2 + n_3^2 \;=\; \frac{n_1^2}{\alpha^2} + \alpha^2n_2^2 + 1 - n_1^2 - n_2^2\\
	&=\; 1 \,+\, (\frac{1}{\alpha^2}-1)\,n_1^2 + (\alpha^2 - 1)\,n_2^2 \;=\; 1 \,+\, (\frac{1}{\alpha^2}-1)\,(n_1^2 - \alpha^2n_2^2)\,,
\end{align*}
thus
\[
	\norm{(\Cof F)\,n} \;=\; 1 \quad\Longleftrightarrow\quad 0 \;=\; (\frac{1}{\alpha^2}-1)\,(n_1^2 - \alpha^2n_2^2) \quad\Longleftrightarrow\quad n_1^2 \;=\; \alpha^2n_2^2
\]
for $\alpha>1$, showing that the cotangent of the angle between $n$ and the contractile axis is $\pm\frac1\alpha$. Therefore the equality $\norm{(\Cof F)\,n} = \norm{n} = 1$ holds \emph{only} if $n$ is a unit normal vector to a plane of no distortion. Note that this equality also implies
\[
	0 = \innerproduct{(\Cof F)\,n, (\Cof F)\,n} - \innerproduct{n,n} = \innerproduct{(\Cof F)^T(\Cof F)\,n-n,\,n} = \innerproduct{(\Cof B-\id)\,n,\,n}\,,
\]
where $B=FF^T$ denotes the left Cauchy-Green deformation tensor.

Becker gives another important characterization of the planes of no distortion: \Bquote{the planes of no distortion {\rm[\dots]} are also the planes of maximum tangential strain} \Bref{becker:maximumTangentialStrain}. In Becker's \emph{Finite Homogeneous Strain, Flow and Rupture of Rocks} \cite{becker1892}, the \emph{tangential strain} of $x\in\R^3$ is defined as the tangent of the angle between $x$ and $Fx$. Accordingly, the \emph{plane of maximum tangential strain} of a pure shear deformation $F$ is defined as the plane containing the undistorted axis as well as the line for which the tangential strain is maximal.

\unitlength1.0mm
\begin{figure}[H]\centering
\begin{picture}(160,42)
	\put(0,0){\epsfig{file=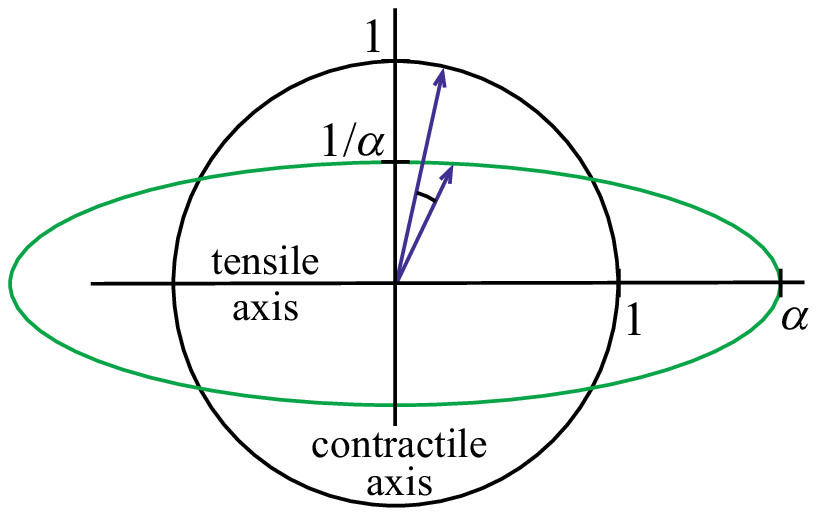, width=70mm, angle=0}}
	\put(39,23){$\widetilde{\vartheta}$}
	\put(37.5,39.5){\textcolor{blue}{$\tilde{x}$}}
	\put(39,31){\textcolor{blue}{$F\tilde{x}$}}
	\put(84,0){\epsfig{file=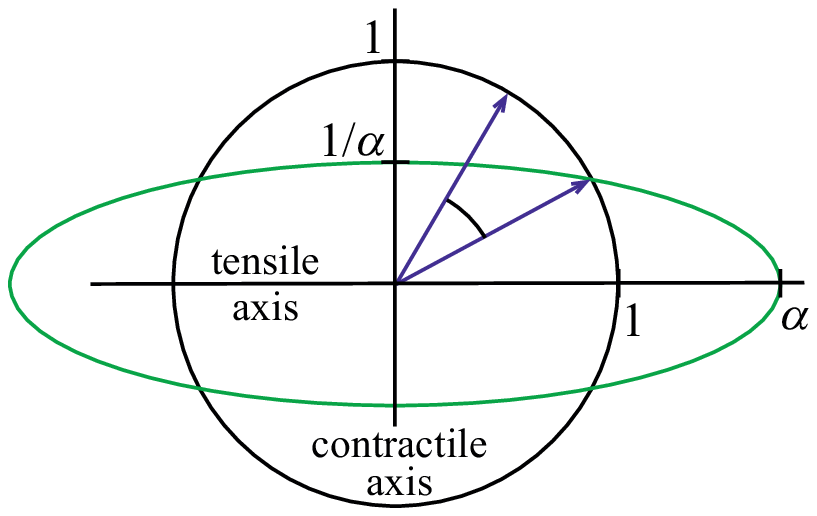, width=70mm, angle=0}}
	\put(125,26){$\vartheta$}
	\put(128,37){\textcolor{blue}{$x$}}
	\put(136,29){\textcolor{blue}{$Fx$}}
\end{picture}
\caption{The shear ellipsoid of a deformation $F$, showing an arbitrary tangential strain $\tan \widetilde{\vartheta}$ (left) and the maximum tangential strain $\tan \vartheta$ (right), which is realized for the plane of no distortion.}
\label{figure:tangentialStrain}
\end{figure}
\unitlength0.357mm

We will show that the planes of maximum tangential strain and the initial planes of no distortion are, in fact, identical. Let $x$ be of the form $x=(x_1,x_2,0)^T$ with $x_1^2+x_2^2=1$, i.e. we assume that $x$ is a unit vector orthogonal to the undistorted $e_3$-axis. Then the angle $\vartheta$ between $x$ and $Fx$ is given by
\[
	\cos\vartheta = \frac{\innerproduct{x,\,Fx}}{\norm{x}\cdot\norm{Fx}} = \frac{\innerproduct{x,\,Fx}}{\norm{Fx}}\,.
\]
The direction $x$ of maximum tangential strain is characterized by the angle $\vartheta$ for which $\tan \vartheta$ is maximal. In order to find $x$ it is therefore sufficient to maximize%
\[
	\vartheta = \arccos\,\frac{\innerproduct{x,Fx}}{\norm{Fx}} = \arccos\,\frac{\alpha\,x_1^2 + \frac{1}{\alpha}\,x_2^2}{\sqrt{\alpha^2x_1^2 + \frac{1}{\alpha^2}\,x_2^2}}\,,
\]
since $\vartheta\mapsto\tan\vartheta$ is monotone. Using the equality $x_1^2 = 1-x_2^2$, we obtain
\[
	\arccos\,\frac{\innerproduct{x,Fx}}{\norm{Fx}} = \arccos\,\frac{\alpha\,(1-x_2^2) + \frac{1}{\alpha}\,x_2^2}{\sqrt{\alpha^2(1-x_2^2) + \frac{1}{\alpha^2}\,x_2^2}}\,.
\]
\newcommand{\tempvar}{t}%
In order to find $\tempvar\in[-1,1]$ for which the function
\[
	f:[-1,1]\to\R\,, \quad f(\tempvar) = \arccos \frac{\alpha(1-\tempvar^2) + \frac{1}{\alpha}\,\tempvar^2}{\sqrt{\alpha^2(1-\tempvar^2) +\frac{1}{\alpha^2}\,\tempvar^2}}
\]
attains its maximum, we compute the first derivative of $f$ to be
\[
	\frac{{\rm d}}{{\rm d}\,t}\,f(t)=\frac{(\alpha^2 \tempvar^2-1)+\tempvar^2}{\alpha\,\tempvar (\tempvar^2-1)} \cdot \frac{ \sqrt{\frac{(\alpha^2-1)^2 \tempvar^2 (\tempvar^2-1)}{\alpha^4 (\tempvar^2-1)-\tempvar^2}}}{ \sqrt{\alpha^2-\frac{(\alpha^4-1) \tempvar^2}{\alpha^2}}}\,.
\]
Thus the possible extremal points of $f$ are $0$, $\pm1$ and $\pm\frac{\alpha}{\sqrt{1+\alpha^2}}$. Since $f(\frac{\alpha}{\sqrt{1+\alpha^2}}) = f(-\frac{\alpha}{\sqrt{1+\alpha^2}})$ as well as $f(-1)=f(0)=f(1)=0$ and $f(t)\geq0$ for all $t\in[-1,1]$, the global maxima of $f$ are given by $t=\pm \frac{\alpha}{\sqrt{1+\alpha^2}}$. Applying this result to our original problem, we find that the maximum tangential strain is attained for the directions $\xhat_\pm = \Big(\sqrt{1-\frac{\alpha^2}{1+\alpha^2}},\, \pm\frac{\alpha}{\sqrt{1+\alpha^2}},\, 0\Big)^T$.

\unitlength1.0mm
\begin{figure}[h]\centering
\begin{picture}(90,60)
  \put(4,0){\epsfig{file=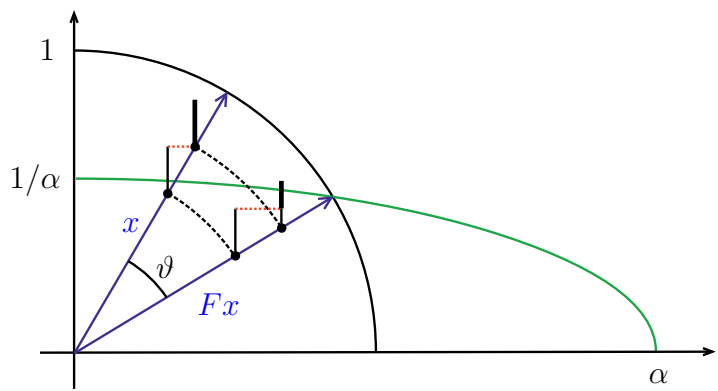, width=90mm, angle=0}}
  \put(4,44){$1$}
  \put(0,27){$1/\alpha$}
  \put(85,1){$\alpha$}
  \put(19.5,15){$\vartheta$}
  \put(15,21){\textcolor{blue}{$x$}}
  \put(25,10){\textcolor{blue}{$Fx$}}
\end{picture}
\caption{Another visualization of the tangential strain: the shift between two parallel
 vertical lines under the deformation is maximal along the plane of no distortion.}
\label{figure:visualisationTangentialStrain}
\end{figure}
\unitlength0.357mm

Finally, in order to verify Becker's claim that \Bquote{the planes of no distortion {\rm[\dots]} are also the planes of maximum tangential strain} \Bref{becker:maximumTangentialStrain}, we observe that
\begin{align*}
	\norm{F\xhat_\pm}^2 \,=\, \Bignorm{\vect{\alpha\cdot\sqrt{1-\frac{\alpha^2}{1+\alpha^2}}\,}{\frac1\alpha\cdot(\pm\frac{\alpha}{\sqrt{1+\alpha^2}})}{0}}^2
	\,&=\, \alpha^2\cdot\left(1-\frac{\alpha^2}{1+\alpha^2}\right) \,+\, \frac{1}{1+\alpha^2}\\
	\,&=\, \alpha^2 + \frac{1-\alpha^4}{1+\alpha^2}
	\,=\, \alpha^2 + 1 - \alpha^2 = 1 = \norm{\xhat_\pm}\,.
\end{align*}
Thus the plane of maximum tangential strain is indeed the initial plane of no distortion.

\subsubsection{Simple glide deformation}\label{section:simpleGlide}
A homogeneous deformation $F$ is called \emph{simple glide} if it has the form \cite{Neff_Muench_simple_shear09}
\[
	F = \matr{1&\gamma&0\\ 0&1&0\\ 0&0&1}\,,
\]
with $\gamma\in\R$, $\gamma>0$. Since $F\cdot(t,0,s)^T = (t,0,s)^T$ for all $t,s\in\R$, the restriction of $F$ to the $e_1$-$e_3$-plane is the identity function, showing that it is the initial plane of no distortion as well as the final plane of no distortion.
\unitlength1.0mm
\begin{figure}[h]\centering
\begin{picture}(90,32)
  \put(0,0){\epsfig{file=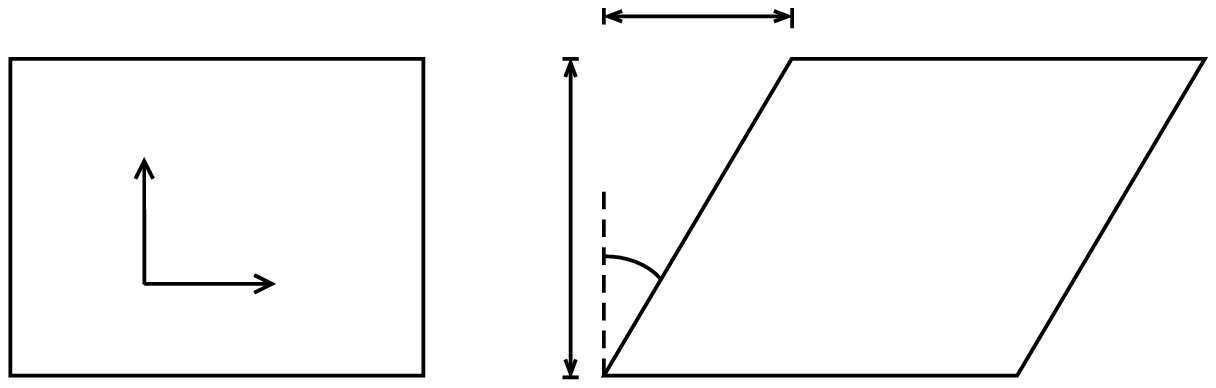, width=90mm, angle=0}}
  \put(22,6){$e_1$}
  \put(9,19){$e_2$}
  \put(39,11){1}
  \put(47,11){$\varphi$}
  \put(50,29){$\gamma$}
\end{picture}
\caption{Simple glide deformation with shear angle $\tan{\varphi}=\gamma/1$; horizontal lines slide relative to each other, vertical lines tilt to accomodate; originally right angles are distorted, the shear strain measures the change of angles.}
\label{figure:simpleGlide}
\end{figure}
\unitlength0.357mm
\noindent We examine the right Cauchy-Green deformation tensor of a simple glide, which is given by
\[
	C = F^TF = \matr{1&\gamma&0\\ \gamma&1+\gamma^2&0\\ 0&0&1}\,.
\]
The principal stretches $\lambda_i$ are the singular values of $F$ which, in turn, are the square roots of the eigenvalues of $C$,
\[
	\lambda_1 = \frac12(\gamma + \sqrt{\gamma^2+4})\,, \qquad \lambda_2 = \frac12(\gamma + \sqrt{\gamma^2-4})\,, \qquad \lambda_3 = 1\,,
\]
and the principal axes are given by the corresponding eigenvectors of $C$:
\[
	v_1 = \matr{2\\ \gamma + \sqrt{\gamma^2+4}\\ 0}\,, \qquad v_2 = \matr{-2\\ \gamma + \sqrt{\gamma^2-4}\\ 0}\,, \qquad v_1 = \matr{0\\ 0\\ 1}\,.
\]
Since $1 < \lambda_1 = \frac1\lambda_2$ as well as $\lambda_3=1$, we can interpret the simple glide as a rotated pure shear with shear ratio $\alpha=\lambda_1$, where the direction of the tensile axis is $v_1$ and the contractile axis is given by $v_2$. The cotangent of the angle $\vartheta$ between $v_2$ and the $e_1$-axis is
\[
	\cot(\vartheta) = \frac{-2}{\gamma + \sqrt{\gamma^2-4}} = \frac{-2}{2\cdot\lambda_2} = -\frac{1}{\lambda_2} = -\alpha\,.
\]
Hence the angle between $(1,0,0)^T=e_1$ and the contractile axis is
\[
	\cot(-\vartheta) = -\cot(\vartheta) = \alpha\,,
\]
yielding another angular characterization of the shear ratio $\alpha$.

\subsection{Dilation}
Similarly to the connection between shear stress and shear stretches, Becker further assumes that \Bquote{dilational forces acting positively and equally in all directions} \Bref{becker:volumetricStress} are \Bquote{[the loads] effecting dilation} \Bref{becker:dilationalStrain}, i.e. that purely volumetric initial stresses correspond to purely dilational stretches. More precisely, this assumption can be stated in the following way: if the principal axes remain fixed, every Biot stress of the form $\Biot = a\cdot\id$ with $a\in\R$ corresponds to a deformation of the form $F = \lambda\cdot\id$ with $\lambda>0$. Again, this assumption is stated only implicitly.

\subsection{Superposition}
Becker's most important assumption is that of a \emph{law of superposition} for coaxial deformations: \Bquote{the load sums correspond to the products of the strain ratios} \Bref{becker:superposition}. For homogeneous, coaxial stretch tensors $U_1$, $U_2$ this law can be stated as
\[
	\Biot(U_1\cdot U_2) \;=\; \Biot(U_1) \,+\, \Biot(U_2)\,,
\]
where $\Biot(U)$ denotes the Biot stress tensor corresponding to the right Biot stretch tensor $U$. Here, $U_1$ and $U_2$ are called coaxial if their principal axes coincide.

\subsubsection{The decomposition of stresses and strains}
An important application of the law of superposition involves the \emph{additive decomposition} of stresses and the corresponding \emph{multiplicative decomposition} of stretches. Consider an initial load (i.e. a Biot stress tensor) of the form
\[
	\Biot_1 = \dmatr{P}{0}{0}\,.
\]
In the case of isotropic materials, the corresponding stretches are identical for all directions orthogonal to the $x_1$-axis for symmetry reasons, which is the case if and only if the stretch tensor corresponding to $\Biot$ has the form
\[
	U_1 = \dmatr{a}{b}{b}
\]
for some $a,b\in\R^+$ with respect to the principal axes. Then, for $h_1=(ab^2)^{\frac13}$ and $p=\left(\frac{a}{b}\right)^{\frac13}$, we find
\begin{align*}
	U_1 &= \dmatr{\left(ab^2\cdot\frac{a^2}{b^2}\right)^{\frac13}}{\left(ab^2\cdot\frac{b}{a}\right)^{\frac13}}{\left(ab^2\cdot\frac{b}{a}\right)^{\frac13}}\\[4mm]
	&= h_1 \cdot \dmatr{p^2}{\frac1p}{\frac1p} = \dmatr{h_1}{h_1}{h_1} \cdot \dmatr{p}{\frac1p}{1} \cdot \dmatr{p}{1}{\frac1p}\,.
\end{align*}
In a similar way we can obtain the stretch tensors corresponding to the stresses
\[
	\Biot_2 = \dmatr{0}{Q}{0}	\qquad \text{ and } \qquad		\Biot_3 = \dmatr{0}{0}{R}
\]
respectively:
\begin{align*}
	U_2 &= h_2 \cdot \dmatr{\frac1q}{q^2}{\frac1q} = \dmatr{h_2}{h_2}{h_2} \cdot \dmatr{\frac1q}{q}{1} \cdot \dmatr{1}{q}{\frac1q}\,,\\[4mm]
	\qquad U_3 &= h_3 \cdot \dmatr{\frac1r}{\frac1r}{r^2} = \dmatr{h_3}{h_3}{h_3} \cdot \dmatr{\frac1r}{1}{r} \cdot \dmatr{1}{\frac1r}{r}
\end{align*}
for some $h_2,h_3,q,r\in\R^+$. Therefore every homogeneous deformation corresponding to a uniaxial load can be decomposed into a dilation and two shear deformations with perpendicular axes. Finally, for the general case of arbitrary stresses
\begin{align*}
	\Biot = \dmatr{P}{Q}{R} &= \dmatr{P}{0}{0} + \dmatr{0}{Q}{0} + \dmatr{0}{0}{R}\\[2mm]
	&=\Biot_1 + \Biot_2 + \Biot_3\,,
\end{align*}
the \textbf{law of superposition} yields the general formula
\begin{align*}
	U = U_1 \cdot U_2 \cdot U_3 &= h_1\,h_2\,h_3 \,\cdot\, \dmatr{p^2}{\frac1p}{\frac1p} \cdot \dmatr{\frac1q}{q^2}{\frac1q} \cdot \dmatr{\frac1r}{\frac1r}{r^2} = h_1\,h_2\,h_3 \,\cdot\, \dmatr{\frac{p^2}{q\,r}}{\frac{q^2}{p\,r}}{\frac{r^2}{p\,q}}
\end{align*}
for the stretch tensor $U$ corresponding to $\Biot$. Then $U$ can be decomposed into a dilation and two shears as well:
\[
	U = \dmatr{h_1\,h_2\,h_3}{h_1\,h_2\,h_3}{h_1\,h_2\,h_3}
		\cdot \dmatr{\frac{p^2}{q\,r}}{\frac{q\,r}{p^2}}{1}
		\cdot \dmatr{1}{\frac{p\,q}{r^2}}{\frac{r^2}{p\,q}}\,.
\]
Furthermore we can find an additive decomposition
\begin{align}
	\Biot \;&=\; \dmatr{\frac{P+Q+R}{3}}{\frac{P+Q+R}{3}}{\frac{P+Q+R}{3}}\label{eq:stressDecomposition}\\[2mm]
	&\qquad +\; \dmatr{-\frac{Q+R-2P}{3}}{\frac{Q+R-2P}{3}}{0}
	\;+\; \dmatr{0}{\frac{P+Q-2R}{3}}{-\frac{P+Q-2R}{3}}\nonumber
\end{align}
of $\Biot$ into a spherical stress and two pure shear stresses. Note that the decomposition of $U$ can be found in Becker's second table \Bref{becker:table2} while the decomposition of $\Biot$ can be found in the third table \Bref{becker:table3}.

In decomposing the strains $U_1$, $U_2$ and $U_3$ into a dilation and two shear strains, the planes of shear were chosen arbitrarily for every strain (or, more precisely, such that the two resulting shear ratios were identical). However, we may also choose two fixed planes (or, equivalently, fixed axes of tension and contraction) and decompose all strains into shears along the same axes:
\begin{align*}
	U_1 &= h_1 \cdot \dmatr{p^2}{\frac1p}{\frac1p} = h_1 \cdot \dmatr{p^2}{\frac{1}{p^2}}{1} \cdot \dmatr{1}{p}{\frac{1}{p}}\,,\\
	U_2 &= h_2 \cdot \dmatr{\frac1q}{q^2}{\frac1q} = h_2 \cdot \dmatr{\frac1q}{q}{1} \cdot \dmatr{1}{q}{\frac1q}\,,\\
	U_3 &= h_3 \cdot \dmatr{\frac1r}{\frac1r}{r^2} = h_3 \cdot \dmatr{\frac1r}{r}{1} \cdot \dmatr{1}{\frac{1}{r^2}}{r^2}\,.\\
\end{align*}
Note that the axes chosen here are identical to those chosen for the decomposition of $U$. Using this approach we obtain a modified version of Becker's first table:
\begin{table}[h]
\begin{center}
\begin{tabular}{l|ccc|ccc|ccc}\hline
Active force & \multicolumn{3}{|c}{P} & \multicolumn{3}{|c}{Q} & \multicolumn{3}{|c}{R}\\ \hline
Axis of strain & $x$ & $y$ & $z$ & $x$ & $y$ & $z$ & $x$ & $y$ & $z$\\ \hline
Dilation$\upperPhant$ & $h_{1}$ & $h_{1}$ & $h_{1}$ & $h_{2}$ & $h_{2}$ & $h_{2}$ & $h_{3}$ & $h_{3}$ & $h_{3}$\\[1ex]
Shear & $p^2$ & $\frac{1}{p^2}$ & 1 & $\frac1q$ & $q$ & 1 & $\frac{1}{r}$ & $r$ & 1\\[1ex]
Shear & 1 & $p$ & $\frac1p$ & 1 & $q$ & $\frac1q$  & 1 & $\frac{1}{r^2}$ & $r^2$\\[2mm] \hline
\end{tabular}
\end{center}
\caption{Modified version of Becker's first table}
\end{table}

\subsubsection{The uniaxial case}
Consider, again, a uniaxial initial stress of the form $\Biot = \dmatrs{0}{Q}{0}$. Then $\Biot$ can be decomposed into
\[
	\Biot \;=\; \underbrace{\frac{Q}{3} \dmatr{-1}{1}{0}}_{=:\Biot_1}
	\;+\; \underbrace{\frac{Q}{3} \dmatr{0}{1}{-1}}_{=:\Biot_2}
	\;+\; \underbrace{\frac{Q}{3} \dmatr{1}{1}{1}}_{=:\Biot_3}\,.
\]
\unitlength1.0mm
\begin{figure}[H]\centering
\begin{picture}(140,65)
  \put(4,0){\epsfig{file=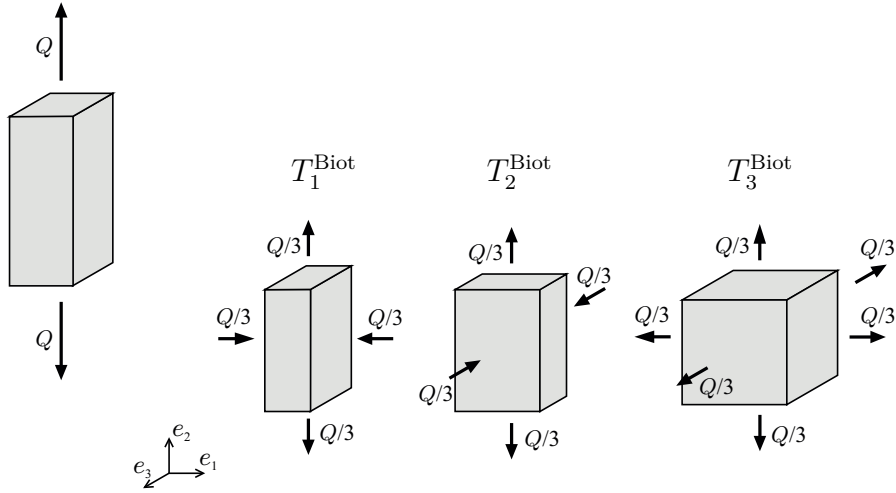, height=65mm, angle=0}}
  \put(42,41){$\Biot_1$}
  \put(68,41){$\Biot_2$}
  \put(100,41){$\Biot_3$}
\end{picture}
\caption{Load and deformation of the uniaxial case can be decomposed into two finite shear and a dilational mode.}
\label{figure:uniaxialDecomposition}
\end{figure}
\unitlength0.357mm
By Becker's assumption the two shear stresses $\Biot_1$ and $\Biot_2$ correspond to shear strains
\[
	U_1 = \dmatr{\frac1\alpha}{\alpha}{1} \quad \text{ and } \quad U_2 = \dmatr{1}{\alpha}{\frac1\alpha}
\]
respectively, while the volumetric stress $\Biot_3$ corresponds to a dilational strain
\[
	U_3 = \dmatr{h}{h}{h}\,.
\]
Then, according to the law of superposition, the strain corresponding to $\Biot$ is given by
\[
	U \;=\; U_1\cdot U_2\cdot U_3 \;=\; \dmatr{\frac{h}{\alpha}}{h\,\alpha^2}{\frac{h}{\alpha}}\,.
\]
The resulting principal stretch $h\,\alpha^2$ in direction of the applied load is referred to by Becker as the \Bquote{length of the strained mass} \Bref{becker:strainedMass} in his further computations for the uniaxial case.

Now we consider another uniaxial load given by $\Biothat = \dmatrs{P}{0}{0}$. Then $\Biothat$ can be decomposed in a number of different ways. For example we could choose, for symmetry reasons, a decomposition similar to that of $\Biot$, i.e.
\[
	\Biothat \;=\; \frac{P}{3}\dmatr{1}{-1}{0}
	\;+\; \frac{P}{3}\dmatr{1}{0}{-1}
	\;+\; \frac{P}{3}\dmatr{1}{1}{1}\,,
\]
yielding the strain
\[
	\Uhat \;=\; \dmatr{\alphahat}{\frac{1}{\alphahat}}{1} \cdot \dmatr{\alphahat}{1}{\frac{1}{\alphahat}} \cdot \dmatr{\hhat}{\hhat}{\hhat} \;=\; \dmatr{\hhat\,\alphahat^2}{\frac{\hhat}{\alphahat}}{\frac{\hhat}{\alphahat}}\,.
\]
It is also possible to decompose $\Biothat$ into shears coaxial to $\Biot_1$ and $\Biot_2$, i.e.
\[
	\Biothat \;=\; -\frac{2\,P}{3}\dmatr{-1}{1}{0}
	\;+\; \frac{P}{3}\dmatr{0}{1}{-1}
	\;+\; \frac{P}{3}\dmatr{1}{1}{1}\,.
\]
Note that the resulting strain is independent of this choice: since
\[
	-\frac{2\,P}{3}\dmatr{-1}{1}{0} \;=\; \frac{P}{3}\dmatr{1}{-1}{0} + \frac{P}{3}\dmatr{1}{-1}{0}\,,
\]
the law of superposition yields
\begin{align*}
	\Uhat \;&=\; \dmatr{\alphahat}{\frac{1}{\alphahat}}{1} \cdot \dmatr{\alphahat}{\frac{1}{\alphahat}}{1}
	\;\cdot\;\dmatr{1}{\alphahat}{\frac{1}{\alphahat}} \cdot \dmatr{\hhat}{\hhat}{\hhat} \;=\; \dmatr{\hhat\,\alphahat^2}{\frac{\hhat}{\alphahat}}{\frac{\hhat}{\alphahat}}
\end{align*}
in this case as well.

\subsubsection{Decomposition along fixed axes}
The additive decomposition of the stress tensor into a volumetric stress and two shears along fixed axes can also be expressed in basic algebraic terms. We will identify the set of all diagonal matrices in $\Rnn$ with the Euclidean space $\R^3$ in the canonical way.
Since the set
\[
	\mathcal{B} \;=\; \Big\{ \vect{-1}{1}{0}\,,\; \vect{0}{1}{-1}\,,\; \vect{1}{1}{1} \Big\}\,,
\]
which corresponds to two shear stresses and a volumetric stress in diagonal form, is a basis of $\R^3$, every $\vects PQR \in\R^3$ can be written as
\begin{equation}\label{eq:vectorDecomposition}
	\vect PQR \;=\; A\,\vect{-1}{1}{0} \,+\, B\,\vect{0}{1}{-1} \,+\, C\,\vect111 \;=\; \signmatr{-1&0&1\\ 1&1&1\\ 0&-1&\hphantom{-}1}\cdot \vect ABC\,.
\end{equation}
For given $P,Q,R\in\R$ we can therefore find the coefficients $A,B,C$ of the decomposition
\[
	\dmatr PQR \;=\; A\,\signdmatr{-1}{\hphantom{-}1}{\hphantom{-}0} \,+\, B\,\signdmatr{0}{\hphantom{-}1}{-1} \,+\, C\,\dmatr111
\]
by rewriting equation \eqref{eq:vectorDecomposition} to read
\begin{align*}
	\vect ABC \;&=\; \signmatr{-1&0&1\\ 1&1&\hphantom{-}1\\ 0&-1&1}\inv\cdot \vect PQR \;=\; \frac13 \,\signmatr{-2&1&1\\ 1&1&-2\\ 1&\hphantom{-}1&1}\cdot \vect PQR \;=\; \frac13\,\vect{-2P+Q+R}{P+Q-2R}{P+Q+R}\,.
\end{align*}
The decomposition obtained in this way is the same as given in \eqref{eq:stressDecomposition}.

\section{Geometry and statical analysis of finite shear deformation}
\label{section:geometryOfShear}

\unitlength1.0mm
\fboxsep5mm
\fboxrule0.3mm
In this section we investigate the geometric and static motivation of the relation between shear deformations and shear stresses assumed by Becker. The unit cube $\Omega_0$ aligned to the orthogonal coordinate system $e_1,e_2,e_3$ in the reference configuration is considered again. We distinguish six variants of orthogonal deformation along the edges of the unit cube such that one edge preserves its length, compare Fig. \ref{FiniteShear3D}.
\begin{figure}[H]\centering
 \begin{picture}(115,98)
  \put(0,0){\epsfig{file=\imagepath/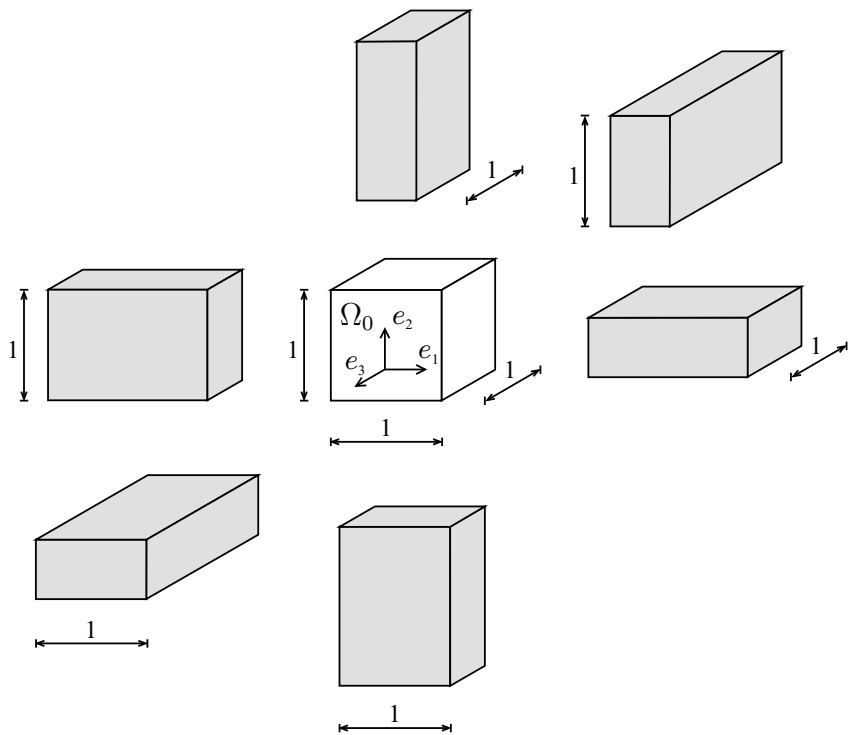, width=112mm, angle=0}}
  \put(44.3,54.2){$\Omega_0$}
 \end{picture}
 \caption{Six variants of plane, orthogonal deformation along the edges of a unit cube.}
 \label{FiniteShear3D}
\end{figure}
Specifying the deformation $\Phi_\alpha:\Omega_0\to\Omega_1$ as pure finite shear with ratio $\alpha$, the volume in the actual configuration $\Omega_1$ is invariant. Thus, stretching one edge in the plane of deformation by $\alpha$ must result in a contraction $1/\alpha$ of the corresponding edge, see also Fig. \ref{FiniteShear1}. The volume invariance holds for any intermediate configuration $\Omega_i$ and goes along with the multiplicative and nonlinear behaviour of finite deformation. We consider the intermediate configuration defined by the symmetry condition
\begin{align}\label{SymPhi}
\Phi_{\!\!\sqrt{\alpha}}: \Omega_0 \mapsto  \Omega_i \, , \quad \Phi_{\!\!\sqrt{\alpha}}: \Omega_i \mapsto
\Omega_1.
\end{align}
\begin{figure}[H]\centering
 \begin{picture}(140,55)
  \put(0,0){\epsfig{file=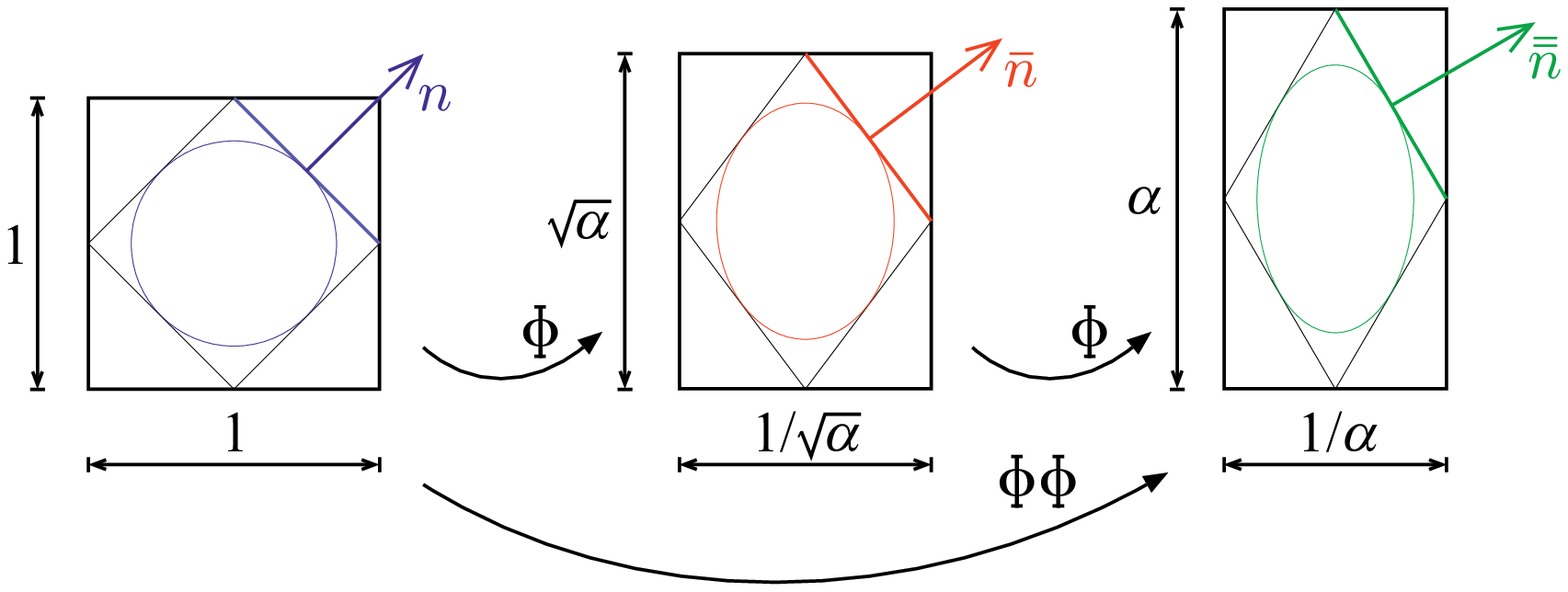, width=130mm, angle=0}}
  \put(20,44){$\Omega_0$}
  \put(67,48){$\Omega_i$}
  \put(112,52){$\Omega_1$}
 \end{picture}
 \caption{Finite shear of a unit cube.}
 \label{FiniteShear1}
\end{figure}
Let us inscribe a circle and a square into $\Omega_0$ as drawn on the left hand side of Fig. \ref{FiniteShear1}. Then $\Phi_{\!\!\sqrt{\alpha}}$ deforms the circle into an ellipse\footnote{The equation of the shear ellipse is also mentioned by Becker \Bref{becker:shearEllipse}.} and the square into a rhombus. Increasing values for
$\alpha$ decrease the angle $\psi$ between the normal vector
\begin{align}\label{Normal_nbar}
\bar{n}=\frac{1}{\sqrt{\alpha+1/\alpha}} \matr{\sqrt{\alpha}\\ 1/\sqrt{\alpha}}
=\frac{1}{\sqrt{\alpha^2+1}} \matr{\alpha\\1}
\end{align}
and the horizontal direction $e_1$, compare Fig. \ref{FiniteShear2a}.
\begin{figure}[H]\centering
 \begin{picture}(84,37)
  \put(0,0){\epsfig{file=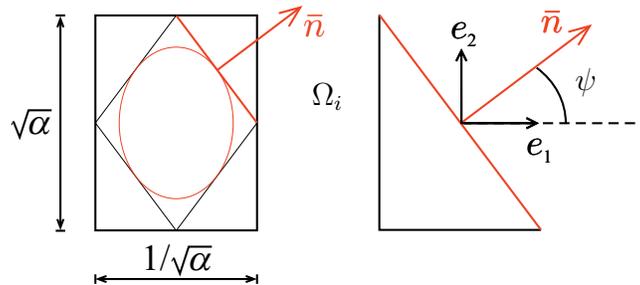, width=85mm, angle=0}}
  \put(40,24){$\Omega_i$}
  \put(75,26){$\psi$}
 \end{picture}
 \caption{Inclination $\psi$ of the normal $\bar n$ in the intermediate configuration $\Omega_i$.}
 \label{FiniteShear2a}
\end{figure}
Becker denotes the scalar product $\innerproduct{n,e_i}$ between the normal $n$ of a plane and the coordinate axis $e_i$ as \Bquote{direction cosines} $n_i$ of the plane \Bref{becker:directionCosines}. For the normal $\bar n$ we find
\begin{align}\label{n1_n2}
n_1=\innerproduct{\bar n, e_1}=\sqrt{\alpha^2+1} \quad , \, n_2=\innerproduct{\bar n, e_2}=\sqrt{1 +
1/\alpha^2} \quad\Longrightarrow\quad n_1=\alpha \, n_2 \,,
\end{align}
thus the inclination $\psi$ of the normal $\bar n$ is given by
\begin{align}\label{Inclination_psi}
\cot{\psi}=\frac{n_1}{n_2}=\alpha \,.
\end{align}
Since here the direction $e_1$ is the contractile axis, it follows from the calculations in section \ref{section:planesOfNoDistortion} that $\bar n$ is normal to the plane of no distortion. The rhombus in Fig. \ref{FiniteShear2a} therefore describes the two directions of no distortion. We explain this fact geometrically for the line perpendicular to $\bar n$ in Fig. \ref{FiniteShear3}. Drawing this line onto the deformed body $\Omega_1$ and then releasing the deformation leads to a rotated but undistorted line in $\Omega_0$. Thus, the intermediate configuration $\Omega_i$ displays an important geometrical aspect in a pure shear deformation.
\begin{figure}[H]\centering
 \begin{picture}(140,55)
  \put(0,0){\epsfig{file=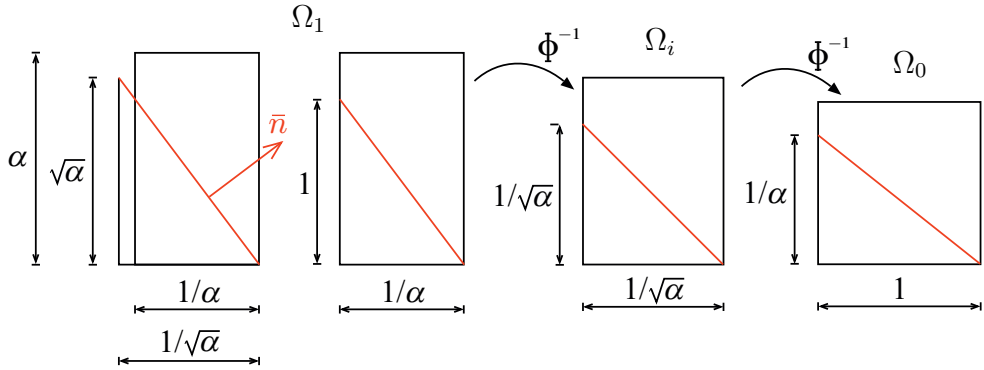, width=130mm, angle=0}}
  \put(38,45){$\Omega_1$}
  \put(85,42){$\Omega_i$}
  \put(118,39){$\Omega_0$}
 \end{picture}
 \caption{The plane of no distortion preserves its length after release of the finite shear deformation.}
 \label{FiniteShear3}
\end{figure}
These properties, Becker argues, suggest that the normal component of the Cauchy stress in the directions of no distortion has to vanish, i.e. $\mathcal{N}=0$, \Bquote{for otherwise there could be no planes of zero distortion} \Bref{becker:planesOfNoDistortionAndTangentialStrain}.
Combining this assumption with the equality $n_1/n_2 = \alpha$ from \eqref{n1_n2}, it follows that the components of the principal Cauchy stresses need to fulfil
\begin{align}\label{QBedingung}
-\sigma_1\,\alpha = \sigma_2/\alpha \,,\quad \sigma_3=0\,,
\end{align}
where $-\sigma_1\,\alpha$ and $\sigma_2/\alpha$ are the resultant loads; note that here, $e_1$ is the contractile axis. From the principal Cauchy stresses in equation \eqref{QBedingung} he infers that the action of \Bquote{two equal loads [\dots] of opposite signs at right angles to one another} \Bref{becker:shearAxiom} must correspond to a simple finite shear.\footnote{A more detailed description of the decomposition of the Cauchy stress Becker employs to arrive at this result can be found in Appendix \ref{section:basicDecomposition}.}

Next, we discuss one of the six variants in Fig. \ref{FiniteShear3D} without loss of generality. The deformation in Fig. \ref{FiniteShear4} results in principal stresses of horizontal and vertical direction. Defining the Cauchy stress as \emph{load per actual area}, the quantity $- \sigma_1\,\alpha$ represents a compressive force in the horizontal axis and $\sigma_2 / \alpha$ is a tensional force in the vertical direction. Equation \eqref{QBedingung} yields that these forces are of the same amount, say $Q$. Then, for a static analysis, we cut the body in $\Omega_1$ by a plane of no distortion.
The lower resp. upper part of the body is in an equilibrium balanced by
\begin{align}\label{EquilibriumPOND}
\mathcal{T} = Q
         \matr{-1/\alpha \\
           1} \,, \, {\rm resp.} \quad
\mathcal{T} = Q \matr{
           1-1+1/\alpha \\
           -1}
     = Q \matr{
           1/\alpha \\
           -1} \,.
\end{align}
The scalar product of $\mathcal{T}$ with $\bar n$ from equation \ref{Normal_nbar}
vanishes and thus, the equilibrium is free of normal components in the plane of no
distortion as required, compare Fig. \ref{FiniteShear4}.
\begin{figure}[H]\centering
 \begin{picture}(140,75)
  \put(0,0){\epsfig{file=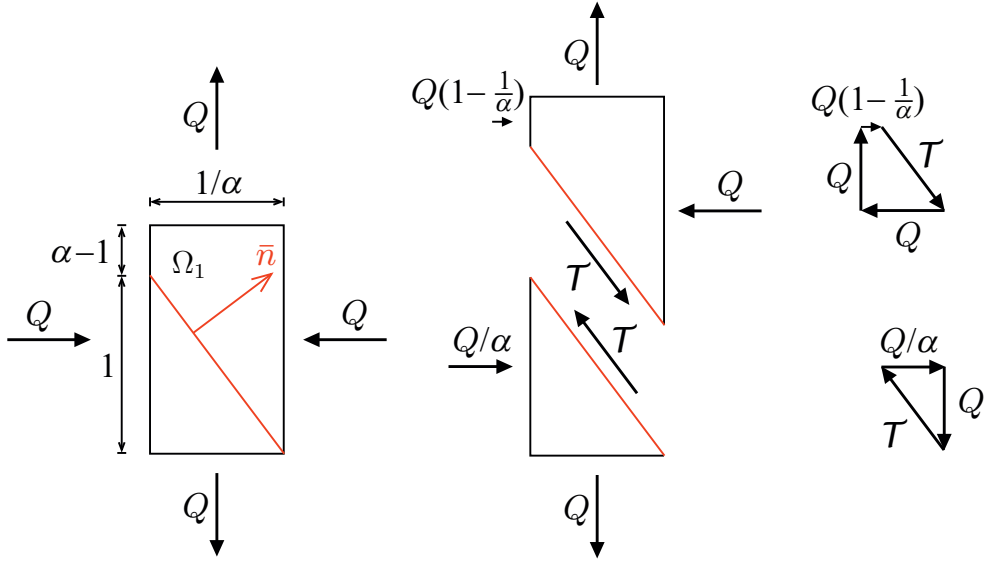, width=130mm, angle=0}}
  \put(22,38){$\Omega_1$}
 \end{picture}
 \caption{Static analysis in the actual configuration along the cut of a plane of no distortion.}
 \label{FiniteShear4}
\end{figure}
Becker concludes that the six variants of simple finite shearing in Fig. \ref{FiniteShear3D} are caused by pairwise forces as sketched in Fig. \ref{FiniteShear3DForces}. This is in concordance with one of five possible invariance conditions for the definition of pure shear \cite{blinowski1998}, namely that the shear tensor is a planar deviator. Note that in the case of isotropic material it is obvious to claim that $P=Q=R$.
\begin{figure}[H]\centering
 \begin{picture}(140,130)
  \put(0,0){\epsfig{file=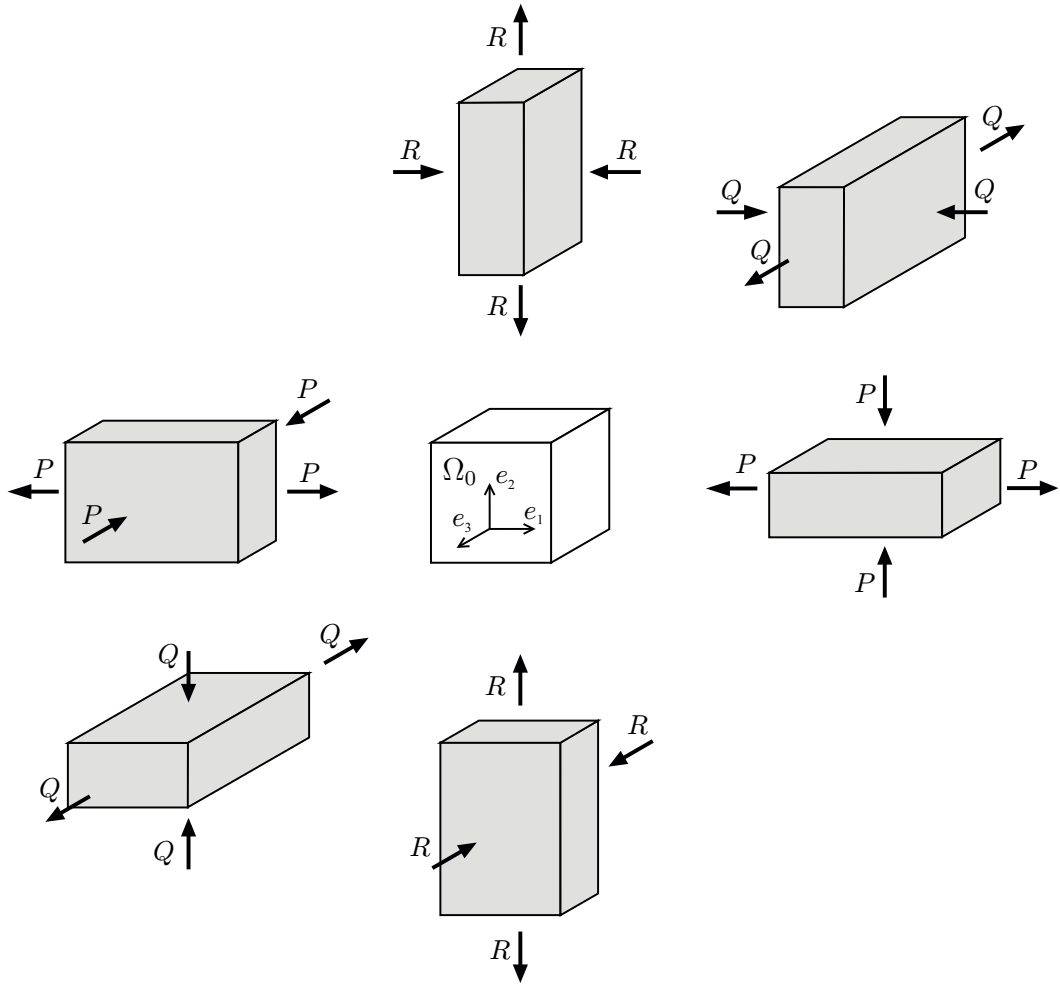, width=140mm, angle=0}}
  \put(58,67){$\Omega_0$}
 \end{picture}
 \caption{Six variants of finite pure shear and the corresponding loads.}
 \label{FiniteShear3DForces}
\end{figure}
\subsection{Mohr's stress circle for finite shear}

In this section we discuss the plane of no distortion in the context of \emph{Mohr's stress circle}\footnote{Mohr \cite{mohr1882} published his work on the representation and transformation of two-dimensional stresses by means of a circle in 1882. In the context of beams, Culmann \cite{culmann1866} published a similar idea in 1866, using a different proof.}. The Mohr circle is a graphical method to determine the stress components acting on a plane rotated against the coordinate system. Given any symmetric stress tensor in $\R^{2\times2}$, Mohr's stress circle allows for a graphical solution of the spectral decomposition. Vice versa, knowing Mohr's stress circle and its spectral decomposition, the tensor components are graphically given for arbitrary but orthogonal coordinate systems, e.g. for a coordinate system in alignment with the plane of no distortion.

It is important to note that the plane of maximal shear stress $\rho_{max}$ does not coincide with the plane of no distortion for finite shear. Considering the loading in Fig. \ref{FiniteShear4} together with equation \eqref{QBedingung} results in the principal stresses
\begin{align}\label{FiniteShearStress}
\sigma_1 = - Q / \alpha \,,\quad \sigma_2= Q \, \alpha.
\end{align}
Therefore, Mohr's stress circle for the finite pure shear loading is drawn in Fig.
\ref{Mohr}.
\begin{figure}[H]\centering
 \begin{picture}(140,68)
  \put(0,0){\epsfig{file=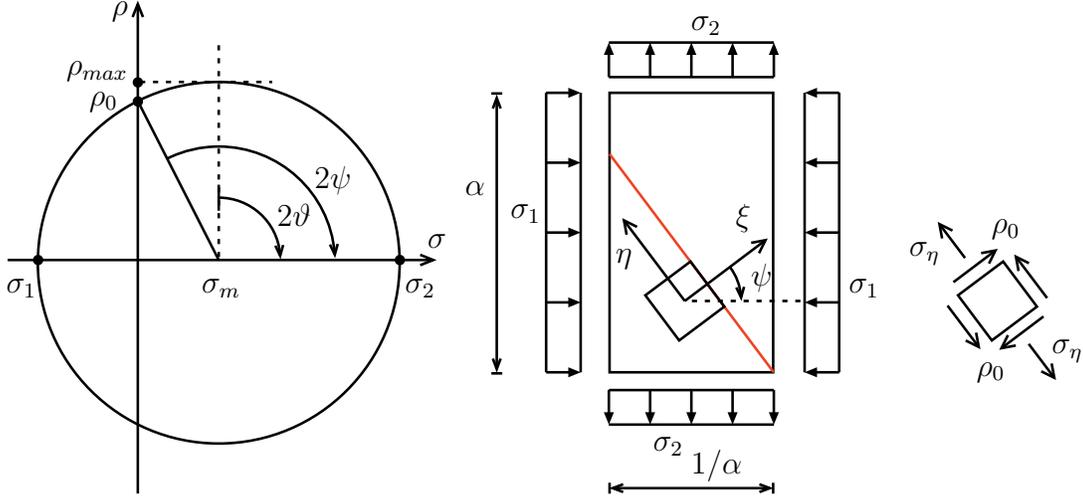, height=66mm, angle=0}}
  \put(14,64){$\rho$}
  \put(8,56){$\rho_{max}$}
  \put(11,52){$\rho_{0}$}
  \put(41,41){$2\psi$}
  \put(36,36){$2\vartheta$}
  \put(56,33){$\sigma$}
  \put(0,27){$\sigma_1$}
  \put(26,27){$\sigma_m$}
  \put(53,27){$\sigma_2$}
  \put(61,40){$\alpha$}
  \put(91,3){$1/\alpha$}
  \put(67,37){$\sigma_1$}
  \put(112,27){$\sigma_1$}
  \put(91,62){$\sigma_2$}
  \put(86,6){$\sigma_2$}
  \put(97,36){$\xi$}
  \put(81,31){$\eta$}
  \put(99,28){$\psi$}
  \put(131,35){$\rho_0$}
  \put(129,16){$\rho_0$}
  \put(120,32){$\sigma_{\eta}$}
  \put(139,19){$\sigma_{\eta}$}
 \end{picture}
 \caption{Mohr's circle for Cauchy stresses in simple finite shear loading.}
 \label{Mohr}
\end{figure}
Becker introduces the quantity $2s=\alpha-1/\alpha$ as the \emph{\enquote{amount of the shear}} \cite{becker1892}. Thus, the center of the circle is shifted from the origin of
axis by
\begin{align}\label{MohrCenter}
\sigma_m = \frac{\sigma_1 + \sigma_2}{2} =
\frac{Q}{2}\left(\alpha-\frac{1}{\alpha}\right) = Q \, s.
\end{align}
For $\alpha>1$ the angle $\vartheta = \pi/4$ pointing to the plane of maximum shear
stress $\rho_{max}$ does not coincide with the angle $\psi$ describing the plane of
no distortion. Demanding $\sigma_{\xi}=0$ in the plane of no distortion determines
the rotation angle of the $\xi$-$\eta$ coordinate system by
\begin{align}\label{psi}
\psi = \half \arccos{\left(\frac{\alpha^2-1}{\alpha^2+1}\right)}=\arccot(\alpha)\,,
\end{align}
which is in accordance with equation \eqref{Inclination_psi}. For $\psi=\arccot(\alpha)$
the stress components are given by
\begin{align}\label{pondstress}
\sigma_{\xi}=0 \,,\quad \sigma_{\eta}=Q \, \frac{\alpha^2-1}{\alpha} \,,\quad
\sigma_{\xi\eta}=\rho_0=Q \,.
\end{align}
Thus, in the plane of no distortion the shear stress $\rho_0$ is a simple function
of the loading $Q$ and independent of $\alpha$.
\subsection{Tangential force and failure criteria in finite shear}\label{section:maxTangLoadPOND}

To discuss the tangential force in the spirit of continuum mechanics, we analyze the
equilibrium of stresses in the vicinity of a point. In case of simple finite shear
the surrounding of a point is a circle in $\Omega_0$ resp. a shear ellipse in
$\Omega_1$, compare Fig. \ref{FiniteShear1}. Considering the diameter of the circle
to be $1$, the diameter $r$ of the ellipse is given in terms of $\alpha$ and the
direction cosines $n_1$, $n_2$ from equation \ref{n1_n2} by
\begin{align}\label{EllipsisDiameter}
\frac{1}{r^2} = \alpha^2 \, n_2^2 + \frac{n_1^2}{\alpha^2} \,.
\end{align}
Considering the finite shear loading from Fig. \ref{FiniteShear4} with principal
Cauchy stresses from equations \ref{FiniteShearStress} gives a resultant force ${\mathcal
R}$ with constant magnitude\footnote{Becker uses only $Q/3$ as loading in his
footnote on rupture, page 339} $Q$ on any line cutting the ellipsis through the mid
point
\begin{align}\label{EllipsisMagnitudeR}
\displaystyle {\mathcal R}^2 = r^2 \, \norm{\sigma \, n}^2 = r^2 \, \norm{\left(
                                                        \begin{array}{c}
                                                          -\frac{Q}{\alpha} \, n_1 \\
                                                          Q \alpha \, n_2 \\
                                                        \end{array}
                                                      \right)}^2
               = r^2 \, Q^2 \, \left( \frac{n_1^2}{\alpha^2} + \alpha^2 \, n_2^2 \right) = Q^2
               \,.
\end{align}
The normal $n$ of the line defines the direction cosines $n_1$ and $n_2$ as
explained in equation \ref{n1_n2}. Next, we investigate the magnitude of ${\mathcal R}$
normal to the line, which is given by
\begin{align}\label{EllipsisMagnitudeN}
\displaystyle {\mathcal N}^2 = r^2 \, \innerproduct{\sigma \, n , \, n}^2 = r^2 \,
\innerproduct{\left(\begin{array}{c}
                    -\frac{Q}{\alpha} \, n_1 \\
                    Q \alpha \, n_2 \\
                    \end{array}
                    \right) , \left(\begin{array}{c}
                    n_1 \\
                    n_2 \\
                    \end{array}
                    \right)}^2
               = r^2 \, Q^2 \, \left( \frac{- n_1^2}{\alpha} + \alpha \, n_2^2 \right)^2 \,.
\end{align}
The magnitude of stress ${\mathcal T}$ tangential to the plane follows from the Pythagorean theorem by
\begin{align}\label{EllipsisMagnitudeT}
\displaystyle {\mathcal T}^2 = {\mathcal R}^2 - {\mathcal N}^2 \,.
\end{align}
Thus the plane of maximal tangential force is due to ${\mathcal N}^2 = 0$ and we
conclude from equation \eqref{EllipsisMagnitudeN}
\begin{align}\label{PondByMaxTangLoad}
\frac{- n_1^2}{\alpha} + \alpha \, n_2^2 = 0  \quad\Longleftrightarrow\quad n_1^2 = n_2^2
\alpha^2 \,.
\end{align}
The planes of no distortion fulfil equation \ref{PondByMaxTangLoad} and attend Becker's
discussion on failure criteria\footnote{The role of the
plane of no distortion in failure criteria is further discussed by Becker in
\cite{becker1904} and \cite{becker1907}.} \Bquote{Rupture by shearing is determined by maximum tangential load, not {\rm [Cauchy]} stress} \Bref{becker:ruptureCriterion}, c.f. section \ref{section:maximumTangentialLoadBecker}. In the present example the maximum shear
stress appears in a cut inclined with $\psi=45^\circ$ to the horizontal axis which does not, however, maximize the tangential load. The situation is illustrated in Fig. \ref{EllipseSchnitte}.
\begin{figure}[H]\centering
\begin{picture}(140,53)
  \put(4,0){\epsfig{file=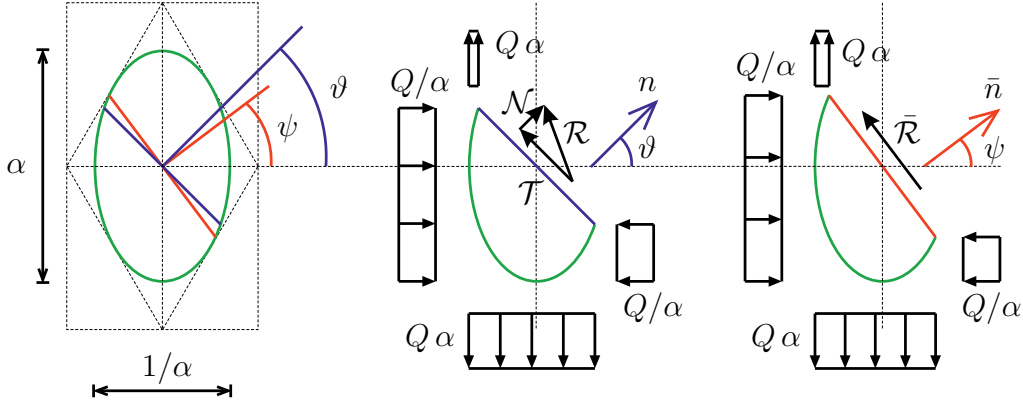, height=53mm, angle=0}}
  \put(0,30){$\alpha$}
  \put(18,3){$1/\alpha$}
  \put(36,35){$\psi$}
  \put(43,40){$\vartheta$}
  \put(51,41){$Q/\alpha$}
  \put(53,7){$Q \, \alpha$}
  \put(65,46){$Q \, \alpha$}
  \put(82,11){$Q/\alpha$}
  \put(84,41){$n$}
  \put(130,40){$\bar{n}$}
  \put(97,43){$Q/\alpha$}
  \put(99,7){$Q \, \alpha$}
  \put(111,45){$Q \, \alpha$}
  \put(127,11){$Q/\alpha$}
  \put(74,34){${\mathcal R}$}
  \put(66,38){${\mathcal N}$}
  \put(68,26){${\mathcal T}$}
  \put(118,34){$\bar{\mathcal R}$}
  \put(130,32){$\psi$}
  \put(84,32){$\vartheta$}
\end{picture}
\caption{Equilibrium and resultant ${\mathcal R}$ resp. $\overline{\mathcal R}$ on lines cutting the finite shear ellipsis; note that $\overline{\mathcal R}^2 > {\mathcal T}^2$.}
\label{EllipseSchnitte}
\end{figure}
Note that the shear stress $\rho_0 = Q/1=Q$ in the plane of no distortion is lower than the Tresca and the von Mises stresses for this kind of loading:
\begin{align}
\sigma^{\rm Tresca}_{\rm y}&=Q \, (\alpha + 1/\alpha)\,, \label{Tresca}\\[2mm]
\sigma^{\rm Mises}_{\rm y}&=Q \, \sqrt{\alpha^2+1+1/\alpha^2}\,, \label{vonMises}\\[2mm]
\sigma_{\xi\eta} = \sigma^{\rm Becker}_{\rm y}&=Q\,. 
\end{align}
Thus, the inequality $\sigma^{\rm Becker}_{\rm y}<\sigma^{\rm Mises}_{\rm y}<\sigma^{\rm Tresca}_{\rm y}$ for $\alpha>0$ shows that
the tangential force in the plane of no distortion represents a more conservative lower bound
as failure criterion.

It is worth mentioning that the Tresca and the von Mises stresses would account for
both the loading $Q$ and the deformation $\alpha$. However, decoupling the failure
criteria from the deformation suggests a basic model, which is also simple from an
experimental point of view.

\unitlength0.357mm
\fboxsep1.8mm
\fboxrule0.3mm

\section{The axiomatic approach}\label{section:deduction}
Our aim in this section is to formulate Becker's stress-stretch relation for ideally elastic, isotropic materials in terms of the Biot stress tensor $\Biot$ and the right Biot stretch tensor $U = \sqrt{F^TF}$, i.e. to find a stress response function $U\mapsto \Biot(U)$ or an inverse response function $\Biot\mapsto U(\Biot)$ which fulfils Becker's assumptions listed in section \ref{section:assumptions}.
In order to deduce such a law of ideal elasticity we will introduce a number of \emph{axioms} corresponding to Becker's \emph{assumptions} to uniquely characterize this stress-stretch relationship. In this we will, at first, closely follow Becker's approach, utilizing all of the given axioms. However, we will later show that the same results can be deduced with only a subset of those assumptions. Furthermore, in order to obtain a more general result, we will formulate our computations in terms of an arbitrary stress tensor instead of only the Biot stress.

\subsection{The basic axioms for an isotropic stress-stretch relation}\label{subsection:basicAxioms}

Let $\stress:\PSymn\to\Symn,\: \stretch\mapsto \stress(\stretch)$ denote a matrix function which maps the set of positive definite symmetric matrices $\PSymn$ to the set of symmetric matrices $\Symn$. In accordance with our interpretation of $\stress$ as a stress response function relation we will refer to the argument $\stretch$ as the stretch and to $\stress(\stretch)$ as the stress tensor. Furthermore, if the mapping is invertible we will, for given $\hat\stress\in\Symn$, often write $\stretch(\hat\stress)$ to denote the unique $\stretch\in\PSymn$ with $\hat\stress = \stress(\stretch)$.\\
Of course we also assume that $\stress$ fulfils the axioms stated in this section.

The first two axioms are common postulates for an arbitrary stress-stretch relation.
\axiombox{Axiom 0.1: Continuous stress response function}{
The function $\stress:\PSymn\to\Symn,\: \stretch\mapsto \stress(\stretch)$ is continuous.
}

Note that this is a weakened version of Becker's assumption that the function mapping stretch and stress is analytic.

\axiombox{Axiom 0.2: Unique stress free reference configuration}{
The equivalence
\[
	\stress(\stretch) = \dmatr{0}{0}{0} = 0 \quad\Longleftrightarrow\quad \stretch=\dmatr{1}{1}{1} = \id\,,
\]
holds.
}

Axiom 2 states that the undeformed (and possibly rotated) reference configuration is the only stress free configuration. Again, this is a weakened version of one of Becker's assumptions, namely that the stress response function is invertible.

Another basic assumption is that of \emph{isotropy}: the response of many materials can be idealized to be independent of the direction of applied stresses.
\axiombox{Axiom 0.3: Isotropy}{
The equality
\[
	\Biot(Q^T \stretch Q) = Q^T \,\Biot(\stretch)\, Q
\]
holds for all $Q\in\On$ and $\stretch\in\PSymn$.
}
Note that isotropy is sometimes defined for $Q\in\SOn$ only. However, since $-Q\in\SOn$ for $Q\in\On\setminus\SOn$, even under this narrower definition we find
\begin{align*}
	\stress(Q^T\,\stretch\,Q) \;&=\; \stress((-Q^T)\,\stretch\,(-Q)) \;=\; (-Q^T) \,\stress(\stretch)\, (-Q) \;=\; Q^T \,\stress(\stretch)\, Q
\end{align*}
for all $Q\in\On\setminus\SOn$ as well.

Since the assumption of isotropy implies that $\stretch$ and $\stress(\stretch)$ are coaxial \cite[Theorem 4.2.4]{Ogden83} for all $\stretch\in\PSymn$, Axiom 0.3 restricts the possible choices for the considered stretch-stress pair $\stretch,\stress$ to coaxial pairs of tensors.
However, this axiom allows us to formulate the other axioms regarding strains and stresses in terms of the principal stretches and principal stresses only \cite{hutchinson1981}:
\begin{quote}
	\emph{\enquote{For an isotropic nonlinear elastic solid, the principal directions of Cauchy stress $\sigma$ must coincide with the axes of the Eulerian strain ellipsoid. Also, to fully specify the state of strain in a material element we need only know the three principal stretches $\lambda_i$ relative to some reference configuration and the principal directions of strain. Thus, the constitutive law is completely determined once the relations between the principal components of Cauchy stress $\sigma_i$ and principal stretches $\lambda_i$ are known.}}
\end{quote}
Since any given $\stretch\in\PSymn$ is unitarily diagonalizable we can write $\stretch$ as
\[
	\stretch = Q^T \; \dmatr{\lambda_1}{\lambda_2}{\lambda_3} \; Q
\]
with $Q\in\On$, where $\lambda_i$ denote the (positive) eigenvalues of $\stretch$. Then $\stress(\stretch)$ is given by
\[
	\stress(\stretch) = Q^T\cdot\stress(Q \stretch Q^T)\cdot Q = Q^T\cdot\stress \Big(\dmatr{\lambda_1}{\lambda_2}{\lambda_3}\Big)\cdot Q\,.
\]
This allows us to focus on stretches given in a diagonal representation, i.e. stretches of the form
\[
	\stretch = \diag(\lambda_1,\lambda_2,\lambda_3) = \dmatr{\lambda_1}{\lambda_2}{\lambda_3}
\]
with $\lambda_1,\lambda_2,\lambda_3 \in\R^+$.

\subsection{Becker's three main axioms}

The first two main axioms for our law of ideal isotropic elasticity refer to two special cases of deformation, pure shear and pure volumetric dilation.
\axiombox{Axiom 1: Pure shear stresses correspond to pure shear stretches}{
For every $\alpha\in\R^+$ there exists $s\in\R$ such that
\[
	\stretch=\matr{\alpha&0&0\\0&\frac{1}{\alpha}&0\\0&0&1} \qquad\Longleftrightarrow\qquad \stress(\stretch) = \matr{s&0&0\\0&-s&0\\0&0&0}\,.
\]
}
If $\stress$ is an isotropic tensor function (Axiom 0.3), then Axiom 1 can also be stated in a more general way. Let $\stretch = \matrs{B_{11}&B_{12}&0\\ B_{12}&B_{22}&0\\ 0&0&1}$ with $\det \matrs{B_{11}&B_{12}\\B_{12}&B_{22}} = 1$. Then the two eigenvalues of $\matrs{B_{11}&B_{12}\\B_{12}&B_{22}}$ are mutually reciprocal, thus $\stretch$ can be diagonalized to
\[
	\stretch \;=\; \matr{Q_{11}&Q_{12}&0\\ Q_{12}&Q_{22}&0\\ 0&0&1}^T \cdot\, \dmatr{\alpha}{\frac1\alpha}{1} \,\cdot\, \matr{Q_{11}&Q_{12}&0\\ Q_{12}&Q_{22}&0\\ 0&0&1}
\]
with $\matr{Q_{11}&Q_{12}\\ Q_{12}&Q_{22}}\in\OO(2)$ and $\alpha\in\R^+$. Using Axiom 1 and Axiom 0.3 we compute
\begin{align*}
	\stress(\stretch) \;&=\; \matr{Q_{11}&Q_{12}&0\\ Q_{12}&Q_{22}&0\\ 0&0&1}^T \cdot\, \stress\Big( \dmatr{\alpha}{\frac1\alpha}{1} \Big) \cdot \matr{Q_{11}&Q_{12}&0\\ Q_{12}&Q_{22}&0\\ 0&0&1}\\[2mm]
	&=\; \matr{Q_{11}&Q_{12}&0\\ Q_{12}&Q_{22}&0\\ 0&0&1}^T \cdot \dmatr{s}{-s}{0} \cdot \matr{Q_{11}&Q_{12}&0\\ Q_{12}&Q_{22}&0\\ 0&0&1} \;=\; \matr{A_{11}&A_{12}&0\\ A_{12}&A_{22}&0\\ 0&0&1}
\end{align*}
with $\tr \matrs{A_{11}&A_{12}\\ A_{12}&A_{22}} =\tr\dmatrs{s}{-s}{0} = 0$. In the isotropic case, Axiom 1 may therefore be equivalently stated as follows (c.f. Fig. \ref{figure:pureshear3D} on page \pageref{figure:pureshear3D}): for every $\matrs{B_{11}&B_{12}\\B_{12}&B_{22}} \in \SL(2)$ there exists $\matrs{A_{11}&A_{12}\\A_{12}&A_{22}}\in\sl(2)$ such that
\[
	\stretch=\matr{B_{11}&B_{12}&0\\ B_{12}&B_{22}&0\\ 0&0&1} \qquad\Longleftrightarrow\qquad \stress(\stretch) = \matr{A_{11}&A_{12}&0\\ A_{12}&A_{22}&0\\ 0&0&1}\,,
\]
where $\SL(n)$ denotes the group of all $X\in\GL(n)$ with $\det X = 1$ and $\sl(n)$ is the corresponding Lie algebra of all trace free matrices in $\R^{n\times n}$.

To further understand the relation between shear stress and shear stretch constituted by Axiom 1 we consider two examples. First, assume that the stress tensor $\stress$ corresponding to a stretch tensor $\stretch$ is a trace free \emph{pure shear stress} of the form
\[
	\stress(\stretch) = \matr{0&s&0\\s&0&0\\0&0&0}\,.
\]
The eigenvalues of $\stress(\stretch)$ are the principal stresses $\stress_1=s$,\, $\stress_2=-s$,\, $\stress_3=0$, thus $\stress(\stretch)$ can be diagonalized to
\[
	\stress(\stretch) = Q^T \dmatr{s}{-s}{0} Q
\]
with $Q\in\SOn$. If $\stretch\mapsto\stress(\stretch)$ is a surjective isotropic mapping satisfying Axiom 1, then
\begin{align*}
	\qquad Q\,\stress(\stretch)\,Q^T = \dmatr{s}{-s}{0} \;\;&\Longrightarrow\;\; \stress(Q\,\stretch\,Q^T) = \dmatr{s}{-s}{0}\\[2mm]
	&\Longrightarrow\quad\;\; Q\,\stretch\,Q^T = \dmatr{\alpha}{\frac1\alpha}{1} \;\;\Longrightarrow\;\; \stretch = Q^T\dmatr{\alpha}{\frac1\alpha}{1}\,Q
\end{align*}
for some $\alpha\in\R^+$. Thus the stretch $\stretch$ corresponding to a pure shear stress is a \emph{pure shear} of the form
\[
	\stretch \;=\; Q^T \matr{\alpha&0&0\\0&\frac{1}{\alpha}&0\\0&0&1} Q \;=\; \matr{B_{11}&B_{12}&0\\B_{12}&B_{22}&0\\0&0&1}%
\]
with $\det\matrs{B_{11}&B_{12}\\B_{12}&B_{22}} = 1$.

Now assume that a deformation gradient $F$ is a \emph{simple glide} of the form
\[
	F = \matr{1&\gamma&0\\0&1&0\\0&0&1}\,.
\]
Then the right Biot stretch tensor $U$ has the form
\[
	U = \sqrt{F^TF} = \matr{1&\gamma&0\\\gamma&\gamma^2+1&0\\0&0&1}^\frac12\,.
\]
Since $1$ is an eigenvalue of $U$ and $\det U = \det F = 1$, the remaining eigenvalues of $U$ must be of the form $\lambda_1=\alpha$ and $\lambda_2=\frac{1}{\alpha}$. Thus the principal stretches of the deformation are $\lambda_1=\alpha$, $\lambda_2=\frac1{\alpha}$, $\lambda_3=1$ for some $\alpha\in\R^+$ and $U$ can be diagonalized to
\[
	U = Q^T \matr{\alpha&0&0\\0&\frac1{\alpha}&0\\0&0&1} Q
\]
with $Q\in\On$. Then, if the function $U\mapsto\stress(U)$ mapping $U$ to a stress tensor $T$ is isotropic and satisfies Axiom 1, we can compute
\begin{align*}
	\stress(U) \;&=\; \stress(Q^T  \matr{\alpha&0&0\\0&\frac1{\alpha}&0\\0&0&1} Q) \;=\; Q^T \:\stress\,\Big( \matr{\alpha&0&0\\0&\frac1{\alpha}&0\\0&0&1} \Big)\: Q \;=\; Q^T \matr{s&0&0\\0&-s&0\\0&0&0} Q
\end{align*}
for some $s\in\R$. Therefore the stress tensor $\stress$ corresponding to a simple glide is a pure shear stress.\\

The second axiom relates \emph{spherical stresses}, i.e. purely normal stresses with the same magnitude in each direction, to \emph{volumetric stretches}, i.e. uniform stretches in all directions.
\axiombox{Axiom 2: Spherical stresses correspond to volumetric stretches}{
For every $\lambda\in\R^+$ there exists $a\in\R$ such that
\[
	\stretch=\lambda\cdot\id = \dmatr{\lambda}{\lambda}{\lambda} \qquad\Longleftrightarrow\qquad \stress(\stretch) = \dmatr{a}{a}{a} = a\cdot\id\,.
\]
}

This relation between spherical stresses and volumetric stretches for isotropic elastic materials is highly intuitive: if the initial load is equal in every direction the resulting deformation should be equal in all directions as well and vice versa. However, whether this feature is true for all magnitudes of applied loads depends on the chosen constitutive framework. It is known that Axiom 2 is not satisfied for a number of well-known isotropic nonlinear elastic formulations, such as the Mooney-Rivlin energy and the Ogden type energy \cite{destrade2012}. The loss of uniqueness of the symmetric solution is encountered in Rivlin's cube problem \cite{rivlin1948}.

While the first two axioms refer only to stresses and stretches of a specific diagonal form, our third and final axiom states a \emph{law of superposition} that holds for all \emph{coaxial} stress-stretch pairs. Recall that we call symmetric matrices $A,B\in\Symn$ \emph{coaxial} if their principal axes coincide, which is the case if and only if $A$ and $B$ are simultaneously diagonalizable as well as if and only if $A$ and $B$ commute.

\axiombox{Axiom 3: Law of superposition}{
Let $\stretch_1,\stretch_2\in\PSymn$ be coaxial. Then
\[
	\stress(\stretch_1\cdot \stretch_2) = \stress(\stretch_1) + \stress(\stretch_2)\,.
\]
}

Note that, for an invertible stress-stretch relation, the third axiom could equivalently be stated as
\begin{equation}\label{eq:lawOfSuperpositionInverted}
	\stretch(\stress_1+\stress_2) = \stretch(\stress_1) \cdot \stretch(\stress_2)
\end{equation}
for all coaxial $\stress_1,\stress_2\in\Symn$.

This law of superposition can be summarized as follows: the (multiplicative) concatenation of stretch tensors should effect the (additive) superposition of the corresponding stress tensors. This nonlinear connection is closely related to a modern approach \cite{Neff_Eidel_Osterbrink_2013} involving the theory of Lie groups: the deformation tensors correspond to the (multiplicative) group $\GLp(3)$ while the stress tensors can be represented by the (linear) Lie algebra $\gl(3)$. By focusing on symmetric stresses $\stress$ as well as on positive definite symmetric stretches $\stretch$ we can also relate the stretch to the subset $\PSym(3)\subset\GLp(3)$ and the stress to the subspace $\Sym(3)\subset\gl(3)$. Since the canonical homomorphism mapping the additive structure of $\Sym(3)$ to the multiplicative group structure of $\PSym(3)$ in the way described by equation \eqref{eq:lawOfSuperpositionInverted} is the \emph{exponential function}
\[
	\exp: \Symn\to\PSymn\,,
\]
which is invertible with its inverse given by the \emph{principal logarithm}
\[
	\log: \PSymn\to\Symn\,,
\]
it is to be expected that the resulting stress-stretch relation $\stretch\mapsto\stress(\stretch)$ is, in turn, logarithmic in nature.

This approach is also closely related to the later deduction of the quadratic Hencky strain energy by Heinrich Hencky, who employed a similar law of superposition to obtain a logarithmic stress response function. However, Hencky considered the superposition of stresses in the \emph{deformed configuration}: the Cauchy stress $\sigma$ in his 1928 article \cite{hencky1928} as well as the Kirchhoff stress $\tau$ in his 1929 article \cite{hencky1929} (an English translation of both papers can be found in \cite{henckyTranslation}). Becker on the other hand assumes a law of superposition for \emph{initial loads}: his law refers to the Biot stress tensor $\Biot$. A more detailed comparison of Becker's and Hencky's work can be found in Appendix \ref{section:historicalContext}.

\subsection{Deduction of the general stress-stretch relation from the axioms}
We will now show that the general stress-stretch relation is determined by the given axioms up to only two constitutive parameters. Since our law of elasticity is isotropic by assumption we will mostly consider deformations given in the diagonal form
\[
	F = \dmatr{\lambda_1}{\lambda_2}{\lambda_3}\,.
\]
Recall from section \ref{subsection:basicAxioms} that the stress-stretch relation is uniquely determined by the stress response to such deformations.

\subsubsection{Basic properties}
Before we explicitly compute the stress-stretch relation from the axioms, we will first deduce some basic properties. The following properties of symmetry and invertibility follow directly from the law of superposition and the uniqueness of the stress-free reference state.
\begin{lemma}\label{lemma:tensionCompression}
Let $\stretch\in\PSymn$. Then $\stress(\stretch\inv) = -\stress(\stretch)$.
\end{lemma}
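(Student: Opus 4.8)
The plan is to derive this directly from the law of superposition (Axiom 3) together with the uniqueness of the stress-free reference configuration (Axiom 0.2). First I would observe that any $\stretch\in\PSymn$ commutes with its inverse $\stretch\inv$ — indeed both are diagonalized by the same orthogonal matrix — so the pair $\stretch,\stretch\inv$ is coaxial, and Axiom 3 is applicable to it.

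Applying Axiom 3 to this pair yields $\stress(\stretch\cdot\stretch\inv) = \stress(\stretch) + \stress(\stretch\inv)$. Since $\stretch\cdot\stretch\inv = \id$ and, by Axiom 0.2, $\stress(\id) = 0$ (equivalently, one may apply Axiom 3 to the coaxial pair $\id,\id$ to get $\stress(\id) = 2\,\stress(\id)$, hence $\stress(\id)=0$), it follows that $0 = \stress(\stretch) + \stress(\stretch\inv)$, that is, $\stress(\stretch\inv) = -\stress(\stretch)$.

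I expect no genuine obstacle here: the only point that warrants a word of justification is the coaxiality of $\stretch$ and $\stretch\inv$, which is immediate from simultaneous diagonalizability. Conceptually, the statement just records the fact that the superposition axiom makes $\stress$ behave, on coaxial positive-definite stretches, like a homomorphism from a multiplicative structure to an additive one, so that an inverse must be mapped to a negative — which is precisely why the eventual stress-stretch relation is logarithmic in nature, as anticipated in the Lie-group heuristic preceding this lemma.
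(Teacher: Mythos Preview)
Your proof is correct and follows exactly the same approach as the paper: apply the law of superposition to the coaxial pair $\stretch,\stretch\inv$, use $\stretch\cdot\stretch\inv=\id$, and invoke Axiom~0.2 to conclude $\stress(\id)=0$. The only difference is that you spell out the coaxiality and offer an alternative derivation of $\stress(\id)=0$, both of which the paper leaves implicit.
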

\begin{proof}
Since $\stretch$ and $\stretch\inv$ obviously commute, the law of superposition implies
\[
	\stress(\stretch) + \stress(\stretch\inv) = \stress(\stretch\cdot \stretch\inv) = \stress(\id) = 0\,,
\]
where the last equality is due to Axiom 0.2.
\end{proof}
\begin{remark}
The symmetry property given in Lemma \ref{lemma:tensionCompression} is generally not equivalent to the symmetric \emph{tension-compression symmetry} in hyperelasticity, which is defined by the equality
\[
	W(F\inv) = W(F)
\]
for the energy function $W$ and all $F\in\GLpn$. A hyperelastic stress-stretch relation is tension-compression symmetric if and only if
\[
	\tau(V\inv) = -\tau(V)
\]
for all $V\in\PSymn$, where $\tau$ denotes the \emph{Kirchhoff stress tensor}.\\
\end{remark}
\begin{lemma}\label{lemma:injectivity}
The mapping $\stretch\mapsto\stress(\stretch)$ is injective, i.e. $T: \PSymn\to\range(\stress),\, \stretch\mapsto \stress(\stretch)$ is invertible.
\end{lemma}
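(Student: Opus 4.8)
The plan is to deduce injectivity by reducing everything to the case of \emph{coaxial} stretches, where the law of superposition bites directly. Suppose $\stress(\stretch_1) = \stress(\stretch_2)$ for $\stretch_1,\stretch_2\in\PSymn$ and write $S$ for this common value. If $\stretch_1$ and $\stretch_2$ happen to be coaxial, the argument is immediate: then $\stretch_1$ and $\stretch_2\inv$ commute, the product $\stretch_1\,\stretch_2\inv$ is again in $\PSymn$, and by Axiom~3 together with Lemma~\ref{lemma:tensionCompression},
\[
	\stress(\stretch_1\,\stretch_2\inv) \;=\; \stress(\stretch_1) + \stress(\stretch_2\inv) \;=\; \stress(\stretch_1) - \stress(\stretch_2) \;=\; 0\,,
\]
so Axiom~0.2 forces $\stretch_1\,\stretch_2\inv = \id$, i.e. $\stretch_1 = \stretch_2$. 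Hence the entire task is to show that $\stress(\stretch_1) = \stress(\stretch_2)$ already forces $\stretch_1$ and $\stretch_2$ to be coaxial.

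For this I would use isotropy (Axiom~0.3), which implies that each $\stretch_i$ is coaxial with its own stress $\stress(\stretch_i) = S$ \cite[Theorem~4.2.4]{Ogden83}. After applying a fixed rotation via Axiom~0.3 we may assume $S$ is diagonal; then both $\stretch_1$ and $\stretch_2$ lie in the commutant of $S$. If $S$ has three distinct principal stresses, that commutant consists only of diagonal matrices, so $\stretch_1$ and $\stretch_2$ are simultaneously diagonal, hence coaxial, and the computation above finishes the proof.

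The technical heart — and the step I expect to be the main obstacle — is the case of a degenerate spectrum of $S$. If $S$ is spherical, $S = a\cdot\id$, then Axiom~2 together with injectivity of the purely dilational response $\lambda\mapsto\stress(\lambda\,\id)$ does it: the latter injectivity follows from Axioms~0.1, 0.2 and~3, since the law of superposition turns $\lambda\mapsto\stress(\lambda\id)$ into a continuous solution of the Cauchy-type equation $f(\lambda_1\lambda_2) = f(\lambda_1)+f(\lambda_2)$ with $f(1)=0$, which is linear in $\log\lambda$ and nontrivial (the zero solution is excluded by Axiom~0.2); hence $\stretch_1 = \lambda_1\id$, $\stretch_2 = \lambda_2\id$ and $\lambda_1 = \lambda_2$. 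If instead $S$ has exactly one repeated principal stress, say $S = \dmatr{s}{s}{s_3}$ with $s\neq s_3$, then each $\stretch_i$ is block-diagonal with a $2\times2$ positive definite block $B_i$ (acting on the degenerate eigenspace of $S$) and a positive scalar $\mu_i$; the point is to show $B_i$ is a scalar multiple of the $2\times2$ identity. I would diagonalize $B_i$ by a rotation of the degenerate eigenspace — which commutes with $S$, hence does not change the stress by Axiom~0.3 — then decompose the resulting diagonal stretch into a dilation and two commuting shears exactly as in Section~\ref{section:assumptions}, and apply the law of superposition with Axiom~1. Since Axiom~1, Axiom~0.1 and superposition force the shear response to be logarithmic (and nontrivial, again by Axiom~0.2), the demand that these contributions recombine to a stress with equal first and second principal values forces the corresponding two principal stretches of $\stretch_i$ to coincide, i.e. $B_i$ is scalar. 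With each $B_i$ scalar, $\stretch_1$ and $\stretch_2$ are once more simultaneously diagonal, hence coaxial, and the proof closes as in the first paragraph.
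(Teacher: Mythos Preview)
Your core strategy is exactly the paper's: from $\stress(\stretch_1)=\stress(\stretch_2)$, use Lemma~\ref{lemma:tensionCompression} and the law of superposition to get $\stress(\stretch_1\,\stretch_2^{-1})=0$, then invoke Axiom~0.2. The paper's proof consists of literally that one displayed line. The substantive difference is that you take the coaxiality hypothesis in Axiom~3 seriously and devote most of your effort to establishing it, whereas the paper simply writes $\stress(\stretch_1)+\stress(\stretch_2^{-1})=\stress(\stretch_1\cdot\stretch_2^{-1})$ without comment --- tacitly assuming that $\stretch_1$ and $\stretch_2$ commute, which is not automatic (and without which $\stretch_1\,\stretch_2^{-1}$ need not even lie in $\PSymn$). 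Your case analysis on the spectrum of the common stress $S$ correctly closes this gap via isotropy and Axioms~1 and~2; the only mild wrinkle is that your treatment of the doubly-degenerate case leans on the logarithmic form of the shear and dilational responses (essentially the content of Lemmas~\ref{lemma:generalDilation} and~\ref{lemma:generalShear}), which in the paper's ordering appear \emph{after} the present lemma --- but since neither of those proofs uses injectivity, there is no circularity, and your appeal to Axiom~0.2 to rule out the trivial shear response is exactly right. In short: same argument, but yours is the rigorous version.
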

\begin{proof}
Let $\stretch_1, \stretch_2 \in\PSymn$ with $\stress(\stretch_1) = \stress(\stretch_2)$. Then using Lemma \ref{lemma:tensionCompression} we find
\[
	0 = \stress(\stretch_1) - \stress(\stretch_2) = \stress(\stretch_1) + \stress(\stretch_2\inv) = \stress(\stretch_1\cdot \stretch_2\inv)\,,
\]
thus Axiom 0.2 yields $\:\stretch_1\cdot \stretch_2\inv = \id$ and therefore $\stretch_1 = \stretch_2$.
\end{proof}
\begin{remark}
We will denote the inverse of the stress response by writing $\stretch(\hat\stress)$ for $\hat\stress\in\range(\stress)$ to denote the unique $\stretch\in\PSymn$ with $\hat\stress = \stress(\stretch)$.
\end{remark}
Combined with the continuity of the stress-stretch relation, the law of superposition allows us to compute the stress response to arbitrary powers of stretches. For a further discussion of non-rational powers of matrices as well as primary matrix functions in general we refer to \cite{higham2008}.
\begin{lemma}\label{lemma:powersOfStretches}
Let $\stretch\in\PSymn$. Then
\[
	\stress(\stretch^r) = r\cdot \stress(\stretch)
\]
for all $r\in\R$.
\end{lemma}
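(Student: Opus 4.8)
The plan is to prove the identity first for integer exponents, then for rationals, and finally for all reals by a density-and-continuity argument. The key preliminary observation is that for a fixed $\stretch\in\PSymn$ with spectral decomposition $\stretch = Q^T\diag(\lambda_1,\lambda_2,\lambda_3)\,Q$, the real power $\stretch^r := Q^T\diag(\lambda_1^r,\lambda_2^r,\lambda_3^r)\,Q = \exp(r\log\stretch)$ is a well-defined element of $\PSymn$ for every $r\in\R$, that $(\stretch^a)^b = \stretch^{ab}$, and that any two such powers $\stretch^a,\stretch^b$ are simultaneously diagonalized by $Q$ and hence coaxial — so that the law of superposition (Axiom 3) always applies to them. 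These are standard facts about primary matrix functions, for which I would cite \cite{higham2008}.

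For $r = n\in\N$ I would argue by induction: the case $n=1$ is trivial, and $\stress(\stretch^{n+1}) = \stress(\stretch^n\cdot\stretch) = \stress(\stretch^n) + \stress(\stretch) = (n+1)\,\stress(\stretch)$ using Axiom 3 and the induction hypothesis. For $r=0$ the claim reduces to $\stress(\id) = 0$, which is Axiom 0.2, and for $r = -n$ with $n\in\N$, Lemma \ref{lemma:tensionCompression} gives $\stress(\stretch^{-n}) = \stress\big((\stretch^n)\inv\big) = -\stress(\stretch^n) = -n\,\stress(\stretch)$. This settles all $r\in\Z$.

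Next, for rational $r$, I would fix $q\in\N$, set $Y := \stretch^{1/q}\in\PSymn$ so that $Y^q = \stretch$, and apply the integer case to $Y$ to obtain $\stress(\stretch) = \stress(Y^q) = q\,\stress(Y)$, i.e. $\stress(\stretch^{1/q}) = \frac1q\,\stress(\stretch)$. Writing a general rational as $p/q$ with $p\in\Z$, $q\in\N$ and using $(\stretch^{1/q})^p = \stretch^{p/q}$ together with the integer case applied to the matrix $\stretch^{1/q}$ then gives $\stress(\stretch^{p/q}) = p\,\stress(\stretch^{1/q}) = \frac pq\,\stress(\stretch)$.

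Finally, I would extend to all $r\in\R$ by density: the map $r\mapsto\stretch^r = \exp(r\log\stretch)$ is continuous, so by continuity of $\stress$ (Axiom 0.1) the function $r\mapsto\stress(\stretch^r)$ is continuous, and it agrees with the continuous function $r\mapsto r\,\stress(\stretch)$ on the dense set $\Q\subset\R$, hence everywhere. The whole argument is routine; the only places where I would take any care are the well-definedness and $r$-continuity of the matrix power $\stretch^r$ and the mutual coaxiality of its powers — which are precisely the standard facts recalled at the start and are needed to invoke Axiom 3.
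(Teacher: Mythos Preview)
Your proposal is correct and follows essentially the same route as the paper's proof: establish the claim for positive integers via the superposition law, extend to all integers using $\stress(\id)=0$ (the paper does this directly rather than citing Lemma \ref{lemma:tensionCompression}, but it is the same computation), pass to rationals via $q$-th roots, and then to all reals by continuity of $\stress$ and of $r\mapsto\stretch^r$. The only cosmetic difference is that the paper first reduces to diagonal $\stretch$ via isotropy whereas you invoke coaxiality of the powers directly; both are adequate for applying Axiom 3.
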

\begin{proof}
Since
\[
	\stress((Q\cdot \stretch\cdot Q^T)^r) = \stress(Q\cdot \stretch^r\cdot Q^T) = Q\cdot \stress(\stretch^r)\cdot Q^T
\]
we will assume without loss of generality that $\stretch$ is in diagonal form, i.e.  $\stretch=\diag(\lambda_1, \lambda_2, \lambda_3)$ with $\lambda_1,\lambda_2,\lambda_3\in\R^+$. Then $\stretch^r = \diag(\lambda_1^r, \lambda_2^r, \lambda_3^r)$ for each $r\in\R$.

For $n\in\N$ we can use the law of superposition to compute
\begin{align*}
	\stress(\stretch_r) &= \stress\Big( \dmatr{\lambda_1^n}{\lambda_2^n}{\lambda_3^n} \Big) \;=\; \stress\Big( \prod_{k=1}^n \dmatr{\lambda_1}{\lambda_2}{\lambda_3} \Big)\\[2mm]
	&= \sum_{k=1}^n \: \stress\Big(\dmatr{\lambda_1}{\lambda_2}{\lambda_3} \Big) \;=\; n \cdot \stress\Big(\dmatr{\lambda_1}{\lambda_2}{\lambda_3} \Big)\,.
\end{align*}
Furthermore we find
\begin{align*}
	0 = \stress(\id) = \stress(\stretch^n \cdot \stretch^{-n}) = n\cdot \stress(\stretch) + \stress(\stretch^{-n}) \quad\Longrightarrow\quad \stress(\stretch^{-n}) = -n\cdot \stress(\stretch)\,,
\end{align*}
thus $\stress(\stretch^z) = z\cdot \stress(\stretch)$ for all $z\in \Z$\,. Similarly we compute
\[
	z\cdot \stress(\stretch) = \stress(\stretch^z) = T\big((\stretch^{\frac{z}{n}})^n\big) = n\cdot \stress(\stretch^{\frac{z}{n}}) \quad\Longrightarrow\quad \stress(\stretch^{\frac{z}{n}}) = \frac{z}{n}
\]
for $z\in\Z$ and $n\in\N$, therefore $\stress(\stretch^q) = q\cdot \stress(\stretch)$ for all $q\in\Q$. Finally, we simply point to the continuity of $\stress$, which follows from Axiom 0.1, to conclude $\stress(\stretch^r) = r\cdot \stress(\stretch)$ for all $r\in\R$.
\end{proof}
\begin{remark}
Using our notation $\stress\mapsto \stretch(\stress)$ for the inverse stress-stretch relation this proposition may equivalently be stated as \[\stretch(r\cdot \stress) = \stretch(\stress)^r \qquad \forall\; r\in\R\,, \;\; \stress\in\range(\stress)\,.\]
\end{remark}
It is obvious that in a one-dimensional setting, Lemma \ref{lemma:powersOfStretches} would already characterize the stresss response as the logarithm to a fixed base. However, this is not immediately clear in the general case since not every stretch $\stretch\in\PSymn$ can be written as the real power of a single fixed matrix.\\
Note also that the assumption of continuity is in fact necessary for the proof of Lemma \ref{lemma:powersOfStretches}.

\subsubsection{Spherical stresses}
While Axiom 2 relates dilations to purely spherical stresses, no assumption about the \emph{amount} of stress is made. By using Lemma \ref{lemma:powersOfStretches}, however, it is easy to give an explicit formula for $\stress(\stretch)$ for arbitrary pure dilations $\stretch$.
\begin{lemma}\label{lemma:generalDilation}
There exists $d\in\R$ such that
\[
	\stress(\lambda\cdot\id) = d\cdot\log(\lambda\cdot\id) = d\cdot \ln(\lambda) \cdot \id
\]
for all $\lambda\in\R^+$.
\end{lemma}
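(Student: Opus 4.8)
The plan is to combine Axiom 2 with Lemma \ref{lemma:powersOfStretches}. First I would record that, by Axiom 2, for every $\lambda\in\R^+$ the stress $\stress(\lambda\cdot\id)$ is itself spherical: there is a so-far-unknown function $a\colon\R^+\to\R$ with $\stress(\lambda\cdot\id)=a(\lambda)\cdot\id$. The task is therefore only to identify $a$, and to show it has the form $\lambda\mapsto d\cdot\ln(\lambda)$ for a single constant $d$.

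Next I would fix one reference dilation, say $e\cdot\id$ (any $\mu\cdot\id$ with $\mu\neq1$ serves equally well), and set $d:=a(e)$, so that $\stress(e\cdot\id)=d\cdot\id$. For arbitrary $\lambda\in\R^+$ write $\lambda=e^{\ln\lambda}$, so that $\lambda\cdot\id=(e\cdot\id)^{\ln\lambda}$ is a real power of the fixed positive definite matrix $e\cdot\id$. Applying Lemma \ref{lemma:powersOfStretches} with the exponent $r=\ln\lambda$ then gives
\[
	\stress(\lambda\cdot\id) \;=\; \stress\big((e\cdot\id)^{\ln\lambda}\big) \;=\; \ln(\lambda)\cdot\stress(e\cdot\id) \;=\; d\cdot\ln(\lambda)\cdot\id\,.
\]
Since the principal matrix logarithm of $\lambda\cdot\id$ is $\log(\lambda\cdot\id)=\ln(\lambda)\cdot\id$, this is precisely the asserted identity $\stress(\lambda\cdot\id)=d\cdot\log(\lambda\cdot\id)$. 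The case $\lambda=1$ is the stress-free reference configuration of Axiom 0.2 and is consistent, both sides vanishing.

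There is no serious obstacle here. The only ingredient beyond Axiom 2 is that Lemma \ref{lemma:powersOfStretches} transports the single value $\stress(e\cdot\id)$ to all dilations at once, and the continuity that Lemma needs is supplied by Axiom 0.1. The one point worth stating explicitly is that $d$ is \emph{independent} of $\lambda$: this is immediate from the computation, as $d=a(e)$ depends only on the fixed reference dilation and not on the argument $\lambda$.
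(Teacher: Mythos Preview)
Your proof is correct and follows essentially the same route as the paper: fix a reference dilation, read off the constant from Axiom~2, and then propagate to all $\lambda$ via Lemma~\ref{lemma:powersOfStretches}. The only cosmetic difference is that you choose the base $e$ (so $d=a(e)$ directly), whereas the paper picks an arbitrary $\lambda_0\neq 1$ and sets $d=a_0/\ln\lambda_0$; your choice is slightly tidier but otherwise identical in substance.
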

\begin{proof}
Choose $\lambda_0 \in\R^+$ with $\lambda_0 \neq 1$. Then, according to Axiom 2, the stress response to $\lambda_0\cdot\id$ is given by
\[
	\stress(\lambda_0\cdot\id) = a_0\cdot\id
\]
for some $a_0\in\R$, and we define $d = \frac{a_0}{\ln\lambda_0}$.
Now let $\lambda\in\R^+$ with $\lambda\neq1$. Then $\lambda=\lambda_0^{\frac{\ln\lambda}{\ln\lambda_0}}$ and
\begin{align*}
	\stress(\lambda\cdot\id) &= \stress(\dmatr{\lambda}{\lambda}{\lambda}) = \stress(\dmatr{\lambda_0^{\frac{\ln\lambda}{\ln\lambda_0}}}{\lambda_0^{\frac{\ln\lambda}{\ln\lambda_0}}}{\lambda_0^{\frac{\ln\lambda}{\ln\lambda_0}}}) = {\frac{\ln\lambda}{\ln\lambda_0}}\cdot \stress(\dmatr{\lambda_0}{\lambda_0}{\lambda_0})\\[2mm]
	&= {\frac{\ln\lambda}{\ln\lambda_0}} \cdot \dmatr{a_0}{a_0}{a_0} = {\frac{a_0}{\ln\lambda_0}} \cdot \ln(\lambda) \cdot \dmatr{1}{1}{1} = d\cdot\ln(\lambda)\cdot\id \;=\; d\cdot\log(\lambda\cdot\id)\,.
\end{align*}
Finally, if $\lambda=1$, then Axiom 0.2 implies $\stress(\lambda\cdot\id) = \lambda(\id) = 0 = c\cdot\log(\id)$.
\end{proof}
\begin{remark}\label{remark:generalDilationInverse}
Note that this proposition can equivalently be stated as
\[
	\stretch(a\cdot\id) = \exp\Big(\frac1d \cdot a\cdot\id\Big) = e^{\frac{a}{d}} \cdot \id \qquad \forall\, a\in\R\,.
\]
\end{remark}

\subsubsection{Shear stresses} Let us now consider a pure shear stretch of the form
\[
	\stretch = \dmatr{\alpha}{\frac{1}{\alpha}}{1}
\]
with the ratio of shear $\alpha\in\R^+$. Again, while Axiom 1 only provides a general relation between shear stretches and shear stresses, the law of superposition yields a quantitative result for the case of pure shears.

\begin{lemma}\label{lemma:generalShear}
There exists $c\in\R$ such that
\begin{equation}
	\stress(\dmatr{\alpha}{\frac{1}{\alpha}}{1}) = c\cdot \log\dmatr{\alpha}{\frac{1}{\alpha}}{1} = c\cdot \dmatr{\ln(\alpha)}{-\ln(\alpha)}{0} \label{eq:shearFormulaPlaneXY}
\end{equation}
for all $\alpha\in\R^+$.
\end{lemma}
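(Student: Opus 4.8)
The plan is to mimic the proof of Lemma~\ref{lemma:generalDilation} almost verbatim, with pure shear stretches in place of pure dilations. First I would fix a reference shear ratio $\alpha_0\in\R^+$ with $\alpha_0\neq 1$. By Axiom~1 there is some $s_0\in\R$ with
\[
	\stress\Big(\dmatr{\alpha_0}{\frac{1}{\alpha_0}}{1}\Big) = \dmatr{s_0}{-s_0}{0}\,,
\]
and I would set $c:=\frac{s_0}{\ln\alpha_0}$.

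Next, for an arbitrary $\alpha\in\R^+$ with $\alpha\neq 1$, put $r:=\frac{\ln\alpha}{\ln\alpha_0}$, so that $\alpha=\alpha_0^{\,r}$ and hence $\frac{1}{\alpha}=\big(\frac{1}{\alpha_0}\big)^{\,r}$. Since the third diagonal entry $1$ is fixed under taking powers, this gives
\[
	\dmatr{\alpha}{\frac{1}{\alpha}}{1} = \dmatr{\alpha_0}{\frac{1}{\alpha_0}}{1}^{\,r}\,,
\]
and Lemma~\ref{lemma:powersOfStretches}, applied to the positive definite symmetric matrix $\dmatr{\alpha_0}{1/\alpha_0}{1}$, yields
\[
	\stress\Big(\dmatr{\alpha}{\frac{1}{\alpha}}{1}\Big) = r\cdot \stress\Big(\dmatr{\alpha_0}{\frac{1}{\alpha_0}}{1}\Big) = \frac{\ln\alpha}{\ln\alpha_0}\cdot\dmatr{s_0}{-s_0}{0} = c\cdot\dmatr{\ln\alpha}{-\ln\alpha}{0}\,,
\]
which is the claimed identity, since the principal logarithm of $\dmatr{\alpha}{1/\alpha}{1}$ is $\dmatr{\ln\alpha}{-\ln\alpha}{0}$. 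The remaining case $\alpha=1$ is immediate: then $\dmatr{\alpha}{1/\alpha}{1}=\id$, so Axiom~0.2 gives $\stress(\id)=0=c\cdot\log(\id)$.

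I do not expect a genuine obstacle. The two points needing a moment's care are (i) that $\dmatr{\alpha}{1/\alpha}{1}$ really is a real power of the single fixed matrix $\dmatr{\alpha_0}{1/\alpha_0}{1}$ — true because both are diagonal and the third diagonal entry stays $1$ — and (ii) that Lemma~\ref{lemma:powersOfStretches} is legitimately available, its proof having already used Axioms~0.1--0.3 together with Axiom~3. As an alternative to invoking that lemma, I could instead observe that the scalar shear-stress function $\alpha\mapsto s(\alpha)$ (well defined by Axiom~1) satisfies $s(\alpha\beta)=s(\alpha)+s(\beta)$, because $\dmatr{\alpha}{1/\alpha}{1}$ and $\dmatr{\beta}{1/\beta}{1}$ commute with product $\dmatr{\alpha\beta}{1/(\alpha\beta)}{1}$, so Axiom~3 applies; continuity of $\stress$ (Axiom~0.1) then forces $s=c\,\ln$ by the classical solution of Cauchy's functional equation. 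Either route finishes in a few lines.
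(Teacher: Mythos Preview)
Your proposal is correct and follows essentially the same route as the paper: fix a reference shear ratio $\alpha_0\neq 1$, use Axiom~1 to obtain $s_0$, set $c=s_0/\ln\alpha_0$, write an arbitrary shear stretch as the $r$-th power of the reference one with $r=\ln\alpha/\ln\alpha_0$, and invoke Lemma~\ref{lemma:powersOfStretches}. The paper's proof is exactly this, presented as ``analogous to that of Lemma~\ref{lemma:generalDilation}''; your alternative via Cauchy's functional equation for $s(\alpha\beta)=s(\alpha)+s(\beta)$ is a valid variant but not the one the paper uses.
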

\begin{proof}
The proof is analogous to that of Lemma \ref{lemma:generalDilation}: choose $\alpha_0\in\R^+$ with $\alpha_0\neq1$. Then there exists $s_0$ such that
\[
	\stress\Big( \dmatr{\alpha_0}{\frac{1}{\alpha_0}}{1} \Big) = \dmatr{s_0}{-s_0}{0}\,,
\]
according to Axiom 1, and we define $c=\frac{s_0}{\ln\alpha_0}$.\\
Now let $\alpha\in\R^+$. Again we can use Axiom 0.2 and the equality $\log(\id)=0$ to show that the equality obviously holds for $\alpha=1$, hence we can assume $\alpha\neq1$ without loss of generality. Then $\alpha=\alpha_0^{\frac{\ln\alpha}{\ln\alpha_0}}$ and
\begin{align*}
	\stress(\dmatr{\alpha}{\frac{1}{\alpha}}{1}) &= \stress(\dmatr{\alpha_0^{\frac{\ln\alpha}{\ln\alpha_0}}}{(\frac1{\alpha_0})^{\frac{\ln\alpha}{\ln\alpha_0}}}{1^{\frac{\ln\alpha}{\ln\alpha_0}}})\\[2mm]
	&= \frac{\ln\alpha}{\ln\alpha_0}\cdot \stress(\dmatr{\alpha_0}{\frac{1}{\alpha_0}}{1})= {\frac{\ln\alpha}{\ln\alpha_0}} \cdot \dmatr{s_0}{-s_0}{0}\\[2mm]
	&= {\frac{s_0}{\ln\lambda_0}} \cdot  \dmatr{\ln\alpha}{-\ln\alpha}{0} = c\cdot\log(\dmatr{\alpha}{\frac{1}{\alpha}}{1})\,.\qedhere
\end{align*}
\end{proof}
\begin{remark}\label{remark:generalShearInverse}
Again, this proposition can also be stated as
\[
	\stretch(\dmatr{s}{-s}{0}) = \exp\,\Big(\frac1c \cdot\! \dmatr{s}{-s}{0}\Big) = \dmatr{e^{\frac1c \cdot s}}{e^{-\frac1c \cdot s}}{1}\qquad \forall\, s\in\R\,.
\]
\end{remark}
Although Lemma \ref{lemma:generalShear} refers only to deformations without strain along the $x_3$-axis, the following corollary shows that a similar property holds for shear deformations along the other principal axes as well.
\begin{corollary}\label{cor:generalShearGeneralAxes}
Let $\alpha\in\R^+$. Then
\begin{align}
	&\stress(\dmatr{\alpha}{1}{\frac{1}{\alpha}}) = c\cdot \log\dmatr{\alpha}{1}{\frac{1}{\alpha}} \label{eq:shearFormulaPlaneXZ}
\intertext{as well as}
	&\stress(\dmatr{1}{\alpha}{\frac{1}{\alpha}}) = c\cdot \log\dmatr{1}{\alpha}{\frac{1}{\alpha}} \label{eq:shearFormulaPlaneYZ}
\end{align}
with $c\in\R$ as given in Lemma \ref{lemma:generalShear}.
\end{corollary}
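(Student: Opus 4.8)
The plan is to derive both identities directly from Lemma~\ref{lemma:generalShear} by a change of principal axes, using only the isotropy of $\stress$ (Axiom 0.3). The point is that a shear stretch in the $x_1$-$x_3$ plane, or in the $x_2$-$x_3$ plane, is the conjugate by a permutation matrix of the shear stretch $\dmatr{\alpha}{\frac1\alpha}{1}$ already handled in Lemma~\ref{lemma:generalShear}, and permutation matrices belong to $\On$.

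For \eqref{eq:shearFormulaPlaneXZ}, I would take the permutation matrix $P\in\On$ interchanging the second and third coordinate axes, so that $P^T\dmatr{\alpha}{\frac1\alpha}{1}P = \dmatr{\alpha}{1}{\frac1\alpha}$. Then Axiom 0.3 together with Lemma~\ref{lemma:generalShear} gives
\[
	\stress\Big(\dmatr{\alpha}{1}{\frac1\alpha}\Big) \;=\; P^T\,\stress\Big(\dmatr{\alpha}{\frac1\alpha}{1}\Big)\,P \;=\; c\cdot P^T\dmatr{\ln\alpha}{-\ln\alpha}{0}P \;=\; c\cdot\dmatr{\ln\alpha}{0}{-\ln\alpha}\,,
\]
and since the principal logarithm of a positive definite diagonal matrix is obtained by taking logarithms of the diagonal entries, the right-hand side equals $c\cdot\log\dmatr{\alpha}{1}{\frac1\alpha}$, which is the claim. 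For \eqref{eq:shearFormulaPlaneYZ} I would argue identically, this time with the cyclic permutation matrix whose conjugation action sends $\diag(\alpha,\tfrac1\alpha,1)$ to $\diag(1,\alpha,\tfrac1\alpha)$; the same computation then yields $\stress\big(\dmatr{1}{\alpha}{\frac1\alpha}\big) = c\cdot\dmatr{0}{\ln\alpha}{-\ln\alpha} = c\cdot\log\dmatr{1}{\alpha}{\frac1\alpha}$.

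There is no real obstacle here. The only things to check are that permutation matrices indeed lie in $\On$ --- so that Axiom 0.3 applies, and in any case the remark following Axiom 0.3 already extends isotropy from $\SOn$ to all of $\On$ --- and that conjugation by a permutation matrix commutes with the principal logarithm, which is immediate because it merely permutes the diagonal entries of a diagonal matrix. The constant $c$ is the very same one from Lemma~\ref{lemma:generalShear}, since isotropy transports it unchanged from one pair of axes to another.
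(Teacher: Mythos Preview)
Your proposal is correct and follows essentially the same approach as the paper: conjugate the shear stretch $\diag(\alpha,\tfrac1\alpha,1)$ by the appropriate permutation matrix in $\On$, invoke isotropy (Axiom 0.3) and Lemma~\ref{lemma:generalShear}, and read off the result. The paper uses exactly the transposition $Q=\matrs{1&0&0\\0&0&1\\0&1&0}$ for \eqref{eq:shearFormulaPlaneXZ} and the cyclic permutation $Q=\matrs{0&1&0\\0&0&1\\1&0&0}$ for \eqref{eq:shearFormulaPlaneYZ}, just as you describe.
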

\begin{proof}
Let $Q=\matrs{1&0&0\\0&0&1\\0&1&0}\in\On$. Then
\begin{align*}
	\stress\,\Big(\dmatr{\alpha}{1}{\frac1\alpha}\Big)
	&= \stress\,\Big(Q^T\cdot\dmatr{\alpha}{\frac1\alpha}{1}\cdot Q\Big) = Q^T\cdot \stress\,\Big(\dmatr{\alpha}{\frac1\alpha}{1}\Big)\cdot Q\\[2mm]
	&= Q^T \cdot c\cdot \log\dmatr{\alpha}{\frac{1}{\alpha}}{1} \cdot Q = c \cdot \log\dmatr{\frac{1}{\alpha}}{\alpha}{1}\,,
\end{align*}
proving \eqref{eq:shearFormulaPlaneXZ}. To show \eqref{eq:shearFormulaPlaneYZ} we let $Q=\matrs{0&1&0\\0&0&1\\1&0&0}\in\SOn$ and find
\begin{align*}
	\stress\,\Big(\dmatr{1}{\alpha}{\frac1\alpha}\Big)
	&= \stress\,\Big(Q^T\cdot\dmatr{\alpha}{\frac1\alpha}{1}\cdot Q\Big) = Q^T\cdot \stress\,\Big(\dmatr{\alpha}{\frac1\alpha}{1}\Big)\cdot Q\\[2mm]
	&= Q^T \cdot c\cdot \log\dmatr{\alpha}{\frac{1}{\alpha}}{1} \cdot Q = c \cdot \log\dmatr{\frac{1}{\alpha}}{\alpha}{1}\,. \qedhere
\end{align*}
\end{proof}

\subsubsection{The general case}

Finally we consider the general case of an arbitrary stretch tensor $\stretch$.
\begin{proposition}\label{prop:deductionFromAllAxioms}
A stress response function $\stretch\mapsto \Biot(\stretch)$ fulfils Axioms 0.1--0.3 and Axioms 1--3 if and only if there exist constants $G,\Lambda\in\R$, $G\neq 0$, $3\,\Lambda+2\,G\neq 0$ such that
\begin{equation}\label{eq:generalStressStretchRelationFromAllAxioms}
	\stress(\stretch) = 2\,G\cdot\log(\stretch) + \Lambda\cdot\tr[\log \stretch]\cdot\id
\end{equation}
or, equivalently, constants $G,\bulk\in\R\setminus\{0\}$ with
\begin{equation}\label{eq:generalStressStretchRelationDeviatorVersionFromAllAxioms}
	\stress(\stretch) = 2\,G\cdot\dev_3\log(\stretch) + \bulk\cdot\tr[\log \stretch]\cdot\id
\end{equation}
for all $\stretch\in\PSymn$, where $\log:\PSymn\to\Symn$ is the principal matrix logarithm and $\dev_3 X = X-\frac13\tr(X)\cdot \id$ denotes the deviatoric part of $X\in\R^{3\times3}$.
\end{proposition}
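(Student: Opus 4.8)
The plan is to prove both implications, disposing of the \enquote{if} direction by direct verification and concentrating on the \enquote{only if} direction, where the axioms are shown to force the logarithmic form. Throughout, $\stress$ denotes the response function and $\stretch$ the stretch, as in the lemmas above.

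\emph{Sufficiency.} Suppose $\stress$ is given by \eqref{eq:generalStressStretchRelationFromAllAxioms} with $G\neq0$ and $3\Lambda+2G\neq0$. Continuity (Axiom 0.1) is immediate since $\log\colon\PSymn\to\Symn$ is continuous. Axiom 0.3 follows from $\log(Q^T\stretch Q)=Q^T(\log\stretch)Q$ and invariance of the trace, and Axiom 3 follows from the fact that $\log(\stretch_1\stretch_2)=\log\stretch_1+\log\stretch_2$ for \emph{commuting} $\stretch_1,\stretch_2\in\PSymn$, together with linearity of $X\mapsto\dev_3 X$ and $X\mapsto\tr(X)\,\id$. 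Axioms 0.2, 1 and 2 follow from the explicit invertibility of $\stress$: applying $\dev_3$ and $\tr$ to \eqref{eq:generalStressStretchRelationDeviatorVersionFromAllAxioms} gives $\dev_3\log\stretch=\frac1{2G}\dev_3\stress$ and $\tr(\log\stretch)=\frac1{3\bulk}\tr\stress$, whence $\log\stretch$, and therefore $\stretch=\exp(\log\stretch)$, is uniquely recovered from $\stress(\stretch)$. In particular $\stress(\stretch)=0$ forces $\log\stretch=0$, i.e. $\stretch=\id$; a pure shear stress $\diag(s,-s,0)$ (trace free) forces $\log\stretch=\frac1{2G}\diag(s,-s,0)$, i.e. $\stretch=\diag(e^{s/2G},e^{-s/2G},1)$; and a spherical stress $a\,\id$ forces $\dev_3\log\stretch=0$, i.e. $\stretch=\lambda\,\id$. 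The hypotheses $G\neq0$ and $3\Lambda+2G\neq0$ are exactly what makes this inversion possible.

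\emph{Necessity.} By isotropy it suffices to determine $\stress(\stretch)$ for diagonal $\stretch=\diag(\lambda_1,\lambda_2,\lambda_3)$, $\lambda_i>0$. The key step is the decomposition of $\log\stretch=\diag(\ell_1,\ell_2,\ell_3)$, $\ell_i:=\ln\lambda_i$, in the basis $\{(-1,1,0)^T,(0,1,-1)^T,(1,1,1)^T\}$ of $\R^3$ used in Section~\ref{section:assumptions}: write $(\ell_1,\ell_2,\ell_3)^T=A\,(-1,1,0)^T+B\,(0,1,-1)^T+C\,(1,1,1)^T$, noting $C=\tfrac13(\ell_1+\ell_2+\ell_3)=\tfrac13\tr(\log\stretch)$. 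Exponentiating the corresponding (mutually commuting) diagonal matrices yields the multiplicative decomposition
\[
	\stretch \;=\; \diag(e^{-A},e^A,1)\,\cdot\,\diag(1,e^B,e^{-B})\,\cdot\,(e^C\,\id)
\]
into a pure shear in the $e_1$-$e_2$-plane, a pure shear in the $e_2$-$e_3$-plane, and a pure dilation. Since the three factors commute, two applications of Axiom 3 together with Lemma~\ref{lemma:generalShear}, Corollary~\ref{cor:generalShearGeneralAxes} (in particular \eqref{eq:shearFormulaPlaneYZ}) and Lemma~\ref{lemma:generalDilation} give $\stress(\stretch)=c\,\diag(-A,A,0)+c\,\diag(0,B,-B)+d\,C\,\id$, where $c$ is the constant of Lemma~\ref{lemma:generalShear} and $d$ that of Lemma~\ref{lemma:generalDilation}. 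Since $c\,\diag(-A,A,0)+c\,\diag(0,B,-B)+c\,C\,\id=c\,\log\stretch$, this rearranges to
\[
	\stress(\stretch) \;=\; c\,\log\stretch \,+\, (d-c)\,C\,\id \;=\; c\,\log\stretch \,+\, \tfrac{d-c}{3}\,\tr(\log\stretch)\,\id\,.
\]
Setting $2G:=c$ and $\Lambda:=\tfrac{d-c}{3}$ (equivalently $2G:=c$, $\bulk:=\tfrac d3$) gives both asserted forms. Finally $G\neq0$ and $3\Lambda+2G=d\neq0$ follow from Axiom 0.2: if $c=0$ then $\stress(\diag(\alpha,\tfrac1\alpha,1))=0$ for every $\alpha\neq1$, and if $d=0$ then $\stress(\lambda\,\id)=0$ for every $\lambda\neq1$, either of which contradicts uniqueness of the stress-free state.

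The only genuinely delicate point is the interplay between the decomposition and the additivity of $\log$: the latter holds only along commuting matrices, which is precisely why Axiom 3 is (and must be) restricted to coaxial stretches and why the argument closes. The decomposition itself is valid for \emph{all} $\lambda_i>0$ without case distinctions, because the three fixed direction vectors form a basis of $\R^3$, so $A,B,C$ always exist and the three factors above are automatically diagonal, hence coaxial; everything else is bookkeeping with the deviatoric/spherical splitting.
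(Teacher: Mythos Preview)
Your proof is correct and follows essentially the same route as the paper: the multiplicative decomposition of a diagonal $\stretch$ into a dilation and two coaxial shears (which you obtain by additively decomposing $\log\stretch$ in the basis $\{(-1,1,0),(0,1,-1),(1,1,1)\}$ and exponentiating) is exactly the decomposition the paper uses, and the subsequent invocation of Lemmas~\ref{lemma:generalDilation}, \ref{lemma:generalShear} and Corollary~\ref{cor:generalShearGeneralAxes} via Axiom~3 matches the paper's argument. Your sufficiency argument is slightly more streamlined than the paper's (you read off Axioms~0.2, 1 and 2 from the explicit inverse rather than verifying each by a separate computation), but the content is the same.
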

\begin{remark}
A justification for the use of the Lam\'e constants $G$, $\Lambda$ and the bulk modulus $\bulk$ in this formulae will be given by means of linearization in section \ref{subsection:infinitesimalDeformations}.
\end{remark}
\begin{proof}
First we consider a stretch tensor $\stretch$ in the diagonal form
\[
	\stretch = \dmatr{p}{q}{r}\,.
\]
Then $\stretch$ can be decomposed multiplicatively into three stretches $\stretch_1$, $\stretch_2$ and $\stretch_3$:
\begin{align*}
	\stretch \;&=\; \dmatr{(p\,q\,r)^{\frac13}}{(p\,q\,r)^{\frac13}}{(p\,qr\,)^{\frac13}} \;\cdot\; \dmatr{\big(\frac{p^2}{q\,r}\big)^{\frac13}}{\big(\frac{q^2}{p\,r}\big)^{\frac13}}{\big(\frac{r^2}{p\,q}\big)^{\frac13}}\\[2mm]
	&=\; \underbrace{\dmatr{p\,q\,r}{p\,qr\,}{p\,q\,r}^{\afrac13}}_{\stretch_1} \;\cdot\; \underbrace{\dmatr{\frac{p^2}{q\,r}}{\frac{q\,r}{p^2}}{1}^{\afrac13}}_{\stretch_2} \;\cdot\; \underbrace{\dmatr{1}{\frac{p\,q}{r^2}}{\frac{r^2}{p\,q}}^{\afrac13}}_{\stretch_3}\,.
\end{align*}
Using the law of superposition we find $\stress(\stretch) = \stress(\stretch_1) + \stress(\stretch_2) + \stress(\stretch_3)$. Lemma \ref{lemma:generalDilation} allows us to compute
\begin{align*}
	\stress(\stretch_1) &= \stress((p\,q\,r)^{\frac13}\cdot\id) = d\cdot\ln((p\,q\,r)^{\frac13})\cdot\id \\&= \frac{d}{3}\cdot\ln(p\,q\,r\,)\cdot\id = \frac{d}{3}\cdot\ln(\det(\stretch))\cdot\id = \frac{d}{3}\cdot\tr[\log(\stretch)]\cdot\id
\end{align*}
with a constant $d\in\R$, while Lemma \ref{lemma:generalShear} and Corollary \ref{cor:generalShearGeneralAxes} simply yield
\[
	\stress(\stretch_2) = c\cdot\log(\stretch_2)\,, \qquad \stress(\stretch_3) = c\cdot\log(\stretch_3)
\]
with $c\in\R$. Therefore
\begin{align*}
	\stress(\stretch_2) + \stress(\stretch_3) &= c\cdot\log(\stretch_2) + c\cdot\log(\stretch_3)\\[2mm]
	&= c \cdot \dmatr{\ln\big[(\frac{p^2}{q\,r})^{\frac13}\big]}  {\ln\big[(\frac{q\,r}{p^2})^{\frac13}\big]}  {0}
	+ c\cdot \dmatr{0}  {\ln\big[(\frac{p\,q}{r^2})^{\frac13}\big]}  {\ln\big[(\frac{r^2}{p\,q})^{\frac13}\big]}\\[2mm]
	&= c \cdot \dmatr{\ln\big[(\frac{p^2}{q\,r})^{\frac13}\big]}  {\ln\big[(\frac{q\,r}{p^2})^{\frac13}\big] + \ln\big[(\frac{p\,q}{r^2})^{\frac13}\big]}  {\ln\big[(\frac{r^2}{p\,q})^{\frac13}\big]}\\[2mm]
	&= c \cdot \dmatr{\ln\big[(\frac{p^2}{q\,r})^{\frac13}\big]}  {\ln\big[(\frac{q^2}{p\,r})^{\frac13}\big]}  {\ln\big[(\frac{r^2}{p\,q})^{\frac13}\big]}\\[2mm]
	&= c \cdot \dmatr{\ln\big[(\frac{1}{p\,q\,r})^{\frac13}\cdot p\big]}  {\ln\big[(\frac{1}{p\,q\,r})^{\frac13}\cdot q\big]}  {\ln\big[(\frac{1}{p\,q\,r})^{\frac13}\cdot r\big]}\\[2mm]
	&= c \cdot \dmatr{\ln(p)}{\ln(q)}{\ln(r)} \;+\; c \cdot \dmatr{\ln\big[(\frac{1}{p\,q\,r})^{\frac13}\big]}  {\ln\big[(\frac{1}{p\,q\,r})^{\frac13}\big]}  {\ln\big[(\frac{1}{p\,q\,r})^{\frac13}\big]}\,,%
\end{align*}
hence
\begin{align*}
	\stress(\stretch_2) + \stress(\stretch_3) &= c\cdot \log\dmatr{p}{q}{r} \;+\; \frac{c}{3}\cdot \dmatr{-\ln(p\,q\,r)}{-\ln(p\,q\,r)}{-\ln(p\,q\,r)}\\[2mm]
	&= c\cdot \log(\stretch) \;-\; \frac{c}{3}\cdot \ln(\det \stretch) \cdot \id\\
	&= c\cdot (\log(\stretch) \;-\; \frac13\cdot \tr[\log \stretch] \cdot \id) \;=\; c\cdot\dev_3\log \stretch\,.
\end{align*}
Thus $\stress(\stretch)$ computes to
\begin{align*}
	\stress(\stretch) &= \stress(\stretch_2) + \stress(\stretch_3) + \stress(\stretch_1) = c\cdot\dev_3\log(\stretch) + \frac{d}{3}\cdot\tr[\log(\stretch)]\cdot\id\,.
\end{align*}
Finally, for arbitrary $\stretch\in\PSymn$ we can choose $Q\in\On$ and a diagonal matrix $D$ such that $\stretch=Q^TDQ$. Utilizing the isotropy property as well as the above computations we find
\begin{align*}
	\stress(\stretch) &= \stress(Q^TDQ) = Q^T\,\Biot(D)\,Q\\
	&= Q^T \cdot\Big(c\cdot\dev_3\log(D) \;+\; \frac{d}{3}\cdot \tr[\log(D)]\cdot \id\Big)\cdot Q\\
	&= c\cdot\dev_3\log(Q^TDQ) \;+\; \frac{d}{3}\cdot \tr[\log(\stretch)]\cdot (Q^TQ) \;=\; c\cdot\dev_3\log(\stretch) \;+\; \frac{d}{3}\cdot \tr[\log(\stretch)]\cdot \id\,,
\end{align*}
thus we obtain equation \eqref{eq:generalStressStretchRelationFromAllAxioms} with $G=\frac{c}{2}$ and $\bulk=\frac{d}{3}$. It is also easy to see that the restrictions $G\neq0$ and $\bulk\neq0$ follow directly from the injectivity of the response function. Furthermore, with $\Lambda = \bulk-\frac{2\,G}{3}$ we obtain the equivalent representation
\begin{align*}
	\stress(\stretch) &= 2\,G\cdot[\log \stretch - \frac13\tr[\log \stretch]\cdot \id] + \bulk\cdot\id\\
	&= 2\,G\cdot\log(\stretch) + \Big(\bulk-\frac{2\,G}{3}\Big)\tr[\log \stretch]\cdot\id \;=\; 2\,G\cdot\log(\stretch) + \Lambda\cdot\tr[\log \stretch]\cdot\id\,.
\end{align*}
It remains to show that the stress response function \eqref{eq:generalStressStretchRelationFromAllAxioms} does indeed satisfy all our axioms. Since the matrix logarithm and the trace operator are continuous functions\footnote{The matrix logarithm is, in fact, an analytic function on $\PSymn$.} on $\PSymn$, Axiom 0.1 obviously holds. The isotropy of the matrix logarithm immediately implies
\begin{align*}
	2\,G\cdot \log((Q^T\stretch Q)) \;+\; \Lambda\cdot\tr[\log (Q^T\stretch Q)] \cdot \id &= 2\,G\cdot Q^T\log(\stretch)Q \;+\; \Lambda\cdot\tr[Q^T (\log \stretch)Q] \cdot \id\\
	&= 2\,G\cdot Q^T\log(\stretch)Q \;+\; \Lambda\cdot\tr[\log \stretch] \cdot \id\\
	&= Q^T\cdot \big(2\,G\cdot \log(\stretch) \;+\; \Lambda\cdot\tr[\log \stretch] \cdot \id\big) \cdot Q\,,
\end{align*}
thus Axiom 0.3 holds as well. To show Axiom 0.2 we employ the equivalent representation formula \eqref{eq:generalStressStretchRelationDeviatorVersionFromAllAxioms}: 
for $G,\bulk \neq 0$ we first note that the mapping
\[
	X \;\mapsto\; 2\,G\cdot\dev_3 X + \bulk\cdot\tr[X]\cdot\id
\]
is an isomorphism from $\Symn$ onto itself. Thus
\[
	2\,G\cdot\dev_3\log(\stretch) + \bulk\cdot\tr[\log \stretch]\cdot\id \;=\; 0
\]
if and only if $\log \stretch = 0$, which is the case if and only if $\stretch=\id$.

We will now consider the remaining three axioms.\\
Axiom 1:\; For $\stretch=\diag(\alpha,\frac1\alpha,1)$ we directly compute
\begin{align*}
	2\,G\cdot\dev_3\log(\stretch) + \bulk\cdot\tr[\log \stretch]\cdot\id%
	&= 2\,G\cdot\dev_3\dmatr{\ln \alpha}{-\ln \alpha}{0} \;+\; \bulk\cdot\left[ \ln\alpha + \ln \frac1\alpha \right]\cdot\id\\[2mm]
	&= 2\,G\cdot\dmatr{\ln \alpha}{-\ln \alpha}{0}\,,
\end{align*}
thus Axiom 1 is fulfilled with $s = 2\,G\,\ln\alpha$.\medskip\\
Axiom 2:\; For $\stretch=\lambda\cdot\id$ we compute
\begin{align*}
	2\,G\cdot\dev_3\log(\stretch) + \bulk\cdot\tr[\log \stretch]\cdot\id%
	&= 2\,G\cdot\dev_3\dmatr{\ln \lambda}{\ln \lambda}{\ln \lambda} \;+\; \bulk\cdot\tr\left[ 3\cdot \ln \lambda \right]\cdot\id\\[2mm]
	&= \bulk\cdot [3\,\ln \lambda] \cdot \id\,,
\end{align*}
thus Axiom 2 is fulfilled with $a = 3\,\bulk\,\ln\alpha$.\medskip\\
Axiom 3:\, First assume that $\stretch_1, \stretch_2 \in\PSymn$ have the diagonal form
\[
	\stretch_1 = \dmatr{\lambda_1}{\lambda_2}{\lambda_3}\,, \qquad \stretch_2 = \dmatr{\lambdahat_1}{\lambdahat_2}{\lambdahat_3}\,.
\]
Then
\begin{align*}
	\log(\stretch_1\cdot \stretch_2) &= \log \dmatr{\lambda_1\,\lambdahat_1}{\lambda_2\,\lambdahat_2}{\lambda_3\,\lambdahat_3} = \dmatr{\ln (\lambda_1\,\lambdahat_1)}{\ln (\lambda_2\,\lambdahat_2)}{\ln (\lambda_3\,\lambdahat_3)}\\[2mm]
	&= \dmatr{\ln (\lambda_1) + \ln(\lambdahat_1)}{\ln (\lambda_2) + \ln(\lambdahat_2)}{\ln (\lambda_3) + \ln(\lambdahat_3)} \;=\; \log(\stretch_1) + \log(\stretch_2)
\end{align*}
and therefore
\begin{align*}
	\stress(\stretch_1\cdot \stretch_2) &= 2\,G\cdot\log(\stretch_1\cdot \stretch_2) + \Lambda\cdot\tr[\log \stretch_1\cdot \stretch_2]\cdot\id\\
	&= 2\,G\cdot[\log(\stretch_1) + \log(\stretch_2)] + \Lambda\cdot\tr[\log (\stretch_1) + \log(\stretch_2)]\cdot\id\\
	&= 2\,G\cdot\log(\stretch_1) + 2\,G\cdot\log(\stretch_2) + \Lambda\cdot\tr[\log (\stretch_1)]\cdot\id + \Lambda\cdot\tr[\log(\stretch_2)]\cdot\id\\
	&= \stress(\stretch_1) + \stress(\stretch_2)\,.
\end{align*}
Now let $\stretch_1$ and $\stretch_2$ denote arbitrary coaxial matrices. Then $\stretch_1$ and $\stretch_2$ can be simultaneously diagonalized, i.e. there exist diagonal matrices $D_1$ and $D_2$ as well as $Q\in\On$ with
\[
	\stretch_1 = Q^T D_1 Q\,, \qquad \stretch_2 = Q^T D_2 Q\,.
\]
Then
\begin{align*}
	\stress(\stretch_1\cdot \stretch_2) &= \stress(Q^T D_1 D_2 Q) = Q^T\cdot\stress(D_1 D_2)\cdot Q\\
	&= Q^T\cdot [\stress(D_1) + \stress(D_2)] \cdot Q = Q^T\,\stress(D_1)\,Q + Q^T\,\stress(D_2)\,Q\\
	&= \stress(Q^T\,D_1\,Q) + \stress(Q^T\,D_2\,Q) = \stress(\stretch_1) + \stress(\stretch_2)\,.\qedhere
\end{align*}
\end{proof}

While Becker assumed Axioms 1 and 2 to hold, we will now show that they are, in fact, not necessary to characterize Becker's law of elasticity but can be deduced from Axioms 0.1--0.3 and Axiom 3 alone.

\begin{lemma}
If Axioms 0.1, 0.2, 0.3 and 3 hold, then Axioms 1 and 2 hold as well.
\end{lemma}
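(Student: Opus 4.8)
The plan is to exploit the fact that each of the two stretch families singled out by Axioms 1 and 2 --- the dilations $\lambda\cdot\id$ and the planar shears $\dmatr{\alpha}{\frac1\alpha}{1}$ --- carries a large stabilizer under conjugation, and that isotropy (Axiom 0.3) transfers that symmetry to the associated stress. Combined with the tension--compression identity $\stress(\stretch\inv)=-\stress(\stretch)$ from Lemma \ref{lemma:tensionCompression}, this forces $\stress$ to have exactly the prescribed shape on these stretches, which is the ``$\Rightarrow$'' implication of each axiom. The ``$\Leftarrow$'' implication is then immediate from injectivity of $\stress$ (Lemma \ref{lemma:injectivity}): the scalar $a$ (resp.\ $s$) supplied by the forward direction is already realised by the dilation $\lambda\cdot\id$ (resp.\ by the corresponding shear), so any other stretch producing the same stress must coincide with it.

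For Axiom 2: since $Q^T(\lambda\cdot\id)\,Q=\lambda\cdot\id$ for every $Q\in\On$, Axiom 0.3 gives $Q^T\stress(\lambda\cdot\id)\,Q=\stress(\lambda\cdot\id)$, so $\stress(\lambda\cdot\id)$ commutes with every orthogonal matrix and must therefore be a scalar multiple $a\cdot\id$ of the identity --- this is the ``$\Rightarrow$'' direction. Conversely, if $\stress(\stretch)=a\cdot\id=\stress(\lambda\cdot\id)$, then Lemma \ref{lemma:injectivity} yields $\stretch=\lambda\cdot\id$.

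For Axiom 1: fix $\stretch=\dmatr{\alpha}{\frac1\alpha}{1}$. Conjugation by the diagonal sign matrices in $\SOn$ leaves $\stretch$ fixed, so by Axiom 0.3 the matrix $\stress(\stretch)$ commutes with all of them and is therefore diagonal, $\stress(\stretch)=\diag(t_1,t_2,t_3)$. Now let $R\in\SOn$ be the half-turn interchanging $e_1$ and $e_2$; then $R^T\stretch\,R=\dmatr{\frac1\alpha}{\alpha}{1}=\stretch\inv$, and Axiom 0.3 together with Lemma \ref{lemma:tensionCompression} gives
\[
	\diag(t_2,t_1,t_3)\;=\;R^T\stress(\stretch)\,R\;=\;\stress(\stretch\inv)\;=\;-\stress(\stretch)\;=\;\diag(-t_1,-t_2,-t_3)\,.
\]
Hence $t_2=-t_1$ and $t_3=0$, i.e.\ $\stress(\stretch)=\dmatr{s}{-s}{0}$ with $s=t_1$; this is the ``$\Rightarrow$'' direction, and ``$\Leftarrow$'' follows from Lemma \ref{lemma:injectivity} exactly as before.

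I do not anticipate a genuine obstacle here; the argument is essentially symmetry bookkeeping. The two places that deserve a moment's care are the verification that the stabilizer of each special stretch is large enough to force the claimed shape (a one-line linear-algebra check in each case --- a symmetric matrix commuting with two suitable diagonal sign matrices is diagonal, and a matrix commuting with every rotation is scalar), and the observation that the biconditional form of Axioms 1 and 2 is handled entirely by the injectivity already established in Lemma \ref{lemma:injectivity}, so that --- perhaps unexpectedly --- neither continuity (Axiom 0.1) nor any surjectivity of the coefficient maps $\lambda\mapsto a$, $\alpha\mapsto s$ is needed at this point.
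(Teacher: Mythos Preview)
Your proposal is correct and follows essentially the same route as the paper. The paper also reduces to diagonal form via isotropy, then for Axiom~1 conjugates by the $e_1\!\leftrightarrow\!e_2$ swap and combines this with $\stress(\stretch)+\stress(\stretch^{-1})=\stress(\id)=0$ (you package this same step as Lemma~\ref{lemma:tensionCompression}), while for Axiom~2 it uses a single cyclic permutation where you invoke the full $\On$-stabilizer; the biconditional is handled via injectivity in both cases, and your remark that continuity is not actually used is accurate and applies to the paper's proof as well.
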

\begin{proof}
First, for $\alpha = 1$ and $\lambda = 1$, we find
\[
	\dmatr{\alpha}{\frac1\alpha}{1} \;=\; \dmatr{\lambda}{\lambda}{\lambda} \;=\; \dmatr{1}{1}{1} \;=\; \id
\]
and thus
\[
	\stress\Big( \dmatr{\alpha}{\frac1\alpha}{1} \Big) \;=\; \stress\Big( \dmatr{\lambda}{\lambda}{\lambda} \Big) \;=\; \stress(\id) \;=\; 0
\]
due to axiom 0.2. We can therefore assume without loss of generality that $\lambda \neq 1 \neq \alpha$.\\
Axiom 1: Let $\alpha\in\R^+$ with $\alpha \neq 1$ and $\stretch=\diag(\alpha,\frac1\alpha,1)$. Then, because the principal axes of $\stress(\stretch)$ and $\stretch$ coincide, $\stress(\stretch)$ is in diagonal form as well:
\[
	\stress(\stretch) = \stress\,\Big(\dmatr{\alpha}{\frac1\alpha}{1}\Big) = \dmatr{a}{b}{c}
\]
for some $a,b,c\in\R$. With $Q=\matrs{0&1&0\\1&0&0\\0&0&1}\in\On$, the property of isotropy allows us to compute
\[
	\stress\Big(\dmatr{\frac1\alpha}{\alpha}{1}\Big) = \stress(Q^T\cdot \stretch\cdot Q) = Q^T\cdot\stress(\stretch)\cdot Q = \dmatr{b}{a}{c}
\]
and using the law of superposition we find
\begin{align*}
	\stress(\id) &=\; \stress\Big(\dmatr{\alpha}{\frac1\alpha}{1} \cdot \dmatr{\frac1\alpha}{\alpha}{1}\Big) \;=\; \stress\Big(\dmatr{\alpha}{\frac1\alpha}{1}\Big) \;+\; \stress\Big(\dmatr{\frac1\alpha}{\alpha}{1}\Big)\\[2mm]
	&=\; \dmatr{a}{b}{c} \;+\; \dmatr{b}{a}{c} \;=\; \dmatr{a+b}{a+b}{2c}\,.
\end{align*}
Since $\stress(\id)=0$ we conclude $b=-a$ as well as $c=0$, thus $\stress(\stretch)$ has the form
\begin{equation}
\label{eq:deductionOfPureShearFormulaFromSuperposition}
	\stress\,\Big(\dmatr{\alpha}{\frac1\alpha}{1}\Big) \;=\; \dmatr{s}{-s}{0}
\end{equation}
with $s = a$. As was shown in the proof of Lemma \ref{lemma:injectivity}, a function satisfying Axioms 3 and 0.2 is injective, hence
\[
	\stress(\stretch) = \matr{s&0&0\\0&-s&0\\0&0&0} \qquad\Longleftrightarrow\qquad \stretch=\matr{\alpha&0&0\\0&\frac{1}{\alpha}&0\\0&0&1}\,.
\]
Axiom 2: Now let $\lambda\in\R^+$ with $\lambda \neq 1$ and $\stretch=\diag(\lambda,\lambda,\lambda) = \lambda\cdot\id$. Then
\[
	\stress(\stretch) = \stress\,\Big(\dmatr{\lambda}{\lambda}{\lambda}\Big) = \dmatr{a}{b}{c}
\]
for some $a,b,c\in\R$. We let $Q=\matrs{0&0&1\\1&0&0\\0&1&0}\in\SOn$ to find
\begin{align*}
	\stress(\stretch) = \stress(\lambda\cdot\id) &= \stress(Q^T\cdot(\lambda\cdot\id)\cdot Q)\\
	& = Q^T\cdot \stress(\lambda\cdot\id)\cdot Q = Q^T\cdot\dmatr{a}{b}{c}\cdot Q = \dmatr{b}{c}{a}\,.
\end{align*}
Therefore $a=b$ and $b=c$, hence $\stress(\stretch)$ has the form
\begin{equation}
\label{eq:deductionOfPureDilationFormulaFromSuperposition}
	\stress(\lambda\cdot\id) \;=\; \dmatr{a}{a}{a} \;=\; a\cdot\id
\end{equation}
with $a\in\R$. Then the injectivity of $\stress$ yields
\[
	\stress(\stretch) = a\cdot\id \qquad\Longleftrightarrow\qquad \stretch=\lambda\cdot\id\,,
\]
concluding the proof.
\end{proof}
From this Lemma and Proposition \ref{prop:deductionFromAllAxioms}, it immediately follows that the reduced set of axioms is sufficient to characterize the stress response function. This result is summarized in the following proposition.
\begin{proposition}\label{prop:deductionFromSuperposition}
Let $\stress:\PSymn\to\Symn$ be a continuous isotropic tensor function with
\[
	\stress(\stretch) \;=\; 0 \quad\Longleftrightarrow\quad \stretch \;=\;\id
\]
and
\[
	\stress(\stretch_1\cdot\stretch_2) \;=\; \stress(\stretch_2) \,+\, \stress(\stretch_2)
\]
for all $\stretch_1,\, \stretch_2 \in \PSymn$. Then there exist constants $G,\Lambda\in\R$, $G\neq 0$, $3\,\Lambda+2\,G\neq 0$ such that
\begin{equation}\label{eq:generalStressStretchRelation}
	\stress(\stretch) = 2\,G\cdot\log(\stretch) + 3\,\Lambda\cdot\tr[\log \stretch]\cdot\id
\end{equation}
or, equivalently, constants $G,\bulk\in\R\setminus\{0\}$ with
\begin{equation}\label{eq:generalStressStretchRelationDeviatorVersion}
	\stress(\stretch) = 2\,G\cdot\dev_3\log(\stretch) + \bulk\cdot\tr[\log \stretch]\cdot\id
\end{equation}
for all $\stretch\in\PSymn$.
\end{proposition}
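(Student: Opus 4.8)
The plan is to reduce the statement to the two results just above rather than redo the computation from scratch. First I would observe that the three standing hypotheses here --- continuity, the equivalence $\stress(\stretch)=0\Leftrightarrow\stretch=\id$, and isotropy --- are precisely Axioms~0.1, 0.2 and 0.3, while the stated superposition identity $\stress(\stretch_1\,\stretch_2)=\stress(\stretch_1)+\stress(\stretch_2)$ for coaxial $\stretch_1,\stretch_2$ is Axiom~3. By the preceding Lemma, Axioms~1 and 2 then hold automatically, so every hypothesis of Proposition~\ref{prop:deductionFromAllAxioms} is satisfied and its conclusion --- the representations \eqref{eq:generalStressStretchRelation}--\eqref{eq:generalStressStretchRelationDeviatorVersion} with $G\neq0$, $\bulk\neq0$ --- carries over. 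In that sense the proof is a short citation; I would still recall the underlying mechanism, since the present proposition is essentially Proposition~\ref{prop:deductionFromAllAxioms} repackaged with a minimal axiom set.

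Unwinding that mechanism: isotropy reduces everything to diagonal stretches $\stretch=\diag(p,q,r)$; Lemma~\ref{lemma:injectivity} (superposition together with uniqueness of the stress-free state) makes $\stress$ injective, which is exactly what forces the two constants to be non-degenerate at the end; and Lemma~\ref{lemma:powersOfStretches} gives $\stress(\stretch^r)=r\,\stress(\stretch)$ for all $r\in\R$ --- the one point where continuity is genuinely needed, to pass from $q\in\Q$ to $r\in\R$. Feeding in the output of the Lemma above (pure shears $\diag(\alpha,\tfrac1\alpha,1)$ go to traceless $\diag(s,-s,0)$ and pure dilations $\lambda\,\id$ go to $a\,\id$) and combining with the power identity produces, via Lemma~\ref{lemma:generalDilation} and Lemma~\ref{lemma:generalShear}/Corollary~\ref{cor:generalShearGeneralAxes}, the closed forms $\stress(\lambda\,\id)=d\,\ln\lambda\cdot\id$ and $\stress(\diag(\alpha,\tfrac1\alpha,1))=c\,\log\diag(\alpha,\tfrac1\alpha,1)$ with a \emph{single} pair of constants $c,d$; the isotropy argument in the Corollary is what guarantees the same $c$ serves all three coordinate planes.

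The substantive step --- the one I expect to be the main obstacle --- is the multiplicative splitting $\stretch=\stretch_1\,\stretch_2\,\stretch_3$ of a general diagonal stretch into one dilation $\stretch_1=(pqr)^{1/3}\,\id$ and two coordinate-plane shears, followed by checking that superposition makes $\stress(\stretch_2)+\stress(\stretch_3)$ collapse to exactly $c\,\dev_3\log\stretch$: the non-spherical logarithmic contributions of the two shears have to cancel pairwise and against the residual spherical term so that nothing but $c\,\dev_3\log\stretch$ survives. Granting that bookkeeping one obtains $\stress(\stretch)=c\,\dev_3\log\stretch+\tfrac{d}{3}\,\tr[\log\stretch]\,\id$ on diagonal $\stretch$, and a final conjugation $\stretch=Q^TDQ$ with $Q\in\On$, using isotropy, extends it to all of $\PSymn$; setting $G=\tfrac{c}{2}$ and $\bulk=\tfrac{d}{3}$ (equivalently $\Lambda=\bulk-\tfrac{2\,G}{3}$) gives the asserted formulas, and injectivity forces $G\neq0$, $\bulk\neq0$. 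Finally I would check the converse --- that the displayed stress response satisfies all the hypotheses --- which is routine: continuity and isotropy of $\log$ and $\tr$, the fact that $X\mapsto 2\,G\,\dev_3 X+\bulk\,\tr[X]\,\id$ is a linear isomorphism of $\Symn$ when $G,\bulk\neq0$ (so $\stress(\stretch)=0\Leftrightarrow\stretch=\id$), and $\log(\stretch_1\stretch_2)=\log\stretch_1+\log\stretch_2$ for commuting positive-definite matrices (so the superposition law holds).
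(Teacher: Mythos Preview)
Your proposal is correct and matches the paper's approach exactly: the paper does not give a standalone proof of this proposition but states it as an immediate consequence of the preceding Lemma (Axioms~0.1--0.3 and~3 imply Axioms~1 and~2) together with Proposition~\ref{prop:deductionFromAllAxioms}. Your additional unwinding of the mechanism is a faithful summary of the proof of Proposition~\ref{prop:deductionFromAllAxioms} and adds nothing beyond what the paper already contains there.
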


If we assume beforehand that the stress response function is invertible, we can also deduce this general law in terms of the inverse stress-stretch relation: let $\stress$ denote a given stress tensor of the form
\[
	\stress = \dmatr{P}{Q}{R}\,.
\]
Then $\stress$ can be written in form of the additive decomposition
\begin{align*}
	\stress \;&=\; \underbrace{\frac{P+Q+R}{3}\:\cdot\:\dmatr{1}{1}{1}}_{\stress_1}
	\;+\; \underbrace{\frac{Q+R-2P}{3}\:\cdot\:\dmatr{-1}{1}{0}}_{\stress_2}  \;+\; \underbrace{\frac{P+Q-2R}{3}\:\cdot\:\dmatr{0}{1}{-1}}_{\stress_3}
\end{align*}
into two pure shear stresses and one spherical stress. Using Remarks \ref{remark:generalDilationInverse} and \ref{remark:generalShearInverse} we compute
\[
	\stretch(\stress_1) = e^{\frac{P+Q+R}{3\,d}} \cdot \id
\]
as well as
\[
	\stretch(\stress_2) = \dmatr{e^{-\frac{Q+R-2P}{3\,c}}} {e^{\frac{Q+R-2P}{3\,c}}} {1}
\]
and
\[
	\stretch(\stress_3) = \dmatr{1} {e^{\frac{P+Q-2R}{3\,c}}} {e^{-\frac{P+Q-2R}{3\,c}}}\,.
\]
Therefore the law of superposition yields
\begin{align*}
	\stretch(\stress) &= \stretch(\stress_1) \;\cdot\; \stretch(\stress_2) \;\cdot\; \stretch(\stress_3) \\[2mm]
	&= e^{\frac{P+Q+R}{3\,d}}  \;\cdot\;  \dmatr{e^{-\frac{Q+R-2P}{3\,c}}} {e^{\frac{Q+R-2P}{3\,c}}} {1}  \;\cdot\;  \dmatr{1} {e^{\frac{P+Q-2R}{3\,c}}} {e^{-\frac{P+Q-2R}{3\,c}}} \\[2mm]
	&= e^{\frac{\tr\stress}{3\,d}}  \;\cdot\;  \dmatr{e^{\frac{2P-Q-R}{3\,c}}} {e^{\frac{2Q-P-R}{3\,c}}} {e^{\frac{2R-P-Q}{3\,c}}} \\[2mm]
	&= e^{\frac{\tr\stress}{3\,d}}  \;\cdot\;  \exp \dmatr{\frac{2P-Q-R}{3\,c}} {\frac{2Q-P-R}{3\,c}} {\frac{2R-P-Q}{3\,c}} \;\;=\;\; e^{\frac{\tr\stress}{3\,d}} \;\cdot\; \exp\Big(\frac1c \cdot\dev_3\stress\Big)\,.
\end{align*}
With constants $G=\frac{c}{2}$ and $\bulk=\frac{d}{3}$ our law of ideal elasticity can therefore be stated as
\begin{equation}\label{eq:gerneralStressStretchRelationInverted}
	\stretch(\stress) \;=\; \exp\Big(\frac{1}{2\,G}\dev_3 T + \frac{1}{9\,\bulk} \tr(T)\cdot\id\Big) \;=\; e^{\frac{1}{9\,\bulk}\cdot\tr\stress} \;\cdot\; \exp\Big(\frac{1}{2\,G}\cdot\dev_3 \stress\Big)\,.
\end{equation}

\subsubsection{Becker's stress response function}
Finally, since Becker assumed Axioms 0.1--0.3 and 1--3 to hold for the Biot stress $\stress = \Biot$ and the right Biot stretch tensors $\stretch = U$, we conclude that  Becker's law of elasticity is given by
\begin{equation}\label{eq:generalStressStretchRelationBecker}
	\Biot(U) = 2\,G\cdot\log(U) + \Lambda\cdot\tr[\log U]\cdot\id
\end{equation}
or, equivalently, by
\begin{align}
		\Biot(U) &= 2\,G\cdot\dev_3\log(U) + \bulk\cdot\tr[\log U]\cdot\id\\
		&= \frac{E}{1+\nu}\cdot\dev_3\log(U) + \frac{E}{3(1-2\nu)}\cdot\tr[\log U]\cdot\id\label{eq:eq:generalStressStretchRelationPoissonVersionBecker}
\end{align}
with Young's modulus $E=\frac{9\,K\,G}{3K+G}$ and Poisson's number $\nu=\frac{3K-2G}{2\,(3K+G)}$.

\subsection{Application to other stresses and stretches}
If we apply Proposition \ref{prop:deductionFromSuperposition} to other coaxial stress-stretch pairs, the resulting law of elasticity will, in general, differ from that given by Becker. Two examples of such combinations are especially important: the left stretch tensor $\stretch = V = \sqrt{FF^T}$ with the Cauchy stress tensor $\stress = \sigma$ as well as the left stretch with the Kirchhoff stress tensor $\stress = \tau$. Those cases where considered by Heinrich Hencky in 1928 and 1929, respectively \cite{hencky1928, hencky1929}. His approach was remarkably similar to Becker's: from the assumption of a law of superposition for these two stresses he deduced two laws of idealized elasticity.
\begin{corollary}\label{cor:hencky}
If the Cauchy stress $\sigma$ is a continuous isotropic function of the left stretch tensor $V$ with
\[
	\sigma(V) \;=\; 0 \quad\Longleftrightarrow\quad V \;=\;\id
\]
and
\[
	\sigma(V_1\cdot V_2) \;=\; \sigma(V_2) \,+\, \sigma(V_2)
\]
for all $V_1,\, V_2 \in \PSymn$, then there exist constants
$G,\bulk\in\R\setminus\{0\}$
such that
\begin{equation}\label{eq:Hencky1928Law}
	\sigma(V) = 2\,\shear\cdot\dev_3\log(V) + \bulk\cdot\tr[\log V]\cdot\id
\end{equation}
for all $V\in\PSymn$.

If the Kirchhoff stress $\tau$ is a continuous isotropic function of the left Biot stretch tensor $V$ with
\[
	\tau(V) \;=\; 0 \quad\Longleftrightarrow\quad V \;=\;\id
\]
and
\[
	\tau(V_1\cdot V_2) \;=\; \tau(V_2) \,+\, \tau(V_2)
\]
for all $V_1,\, V_2 \in \PSymn$, then there exist constants
$G,\bulk\in\R\setminus\{0\}$
such that
\begin{equation}\label{eq:HenckyHyperelasticLaw}
	\tau(V) = 2\,\shear\cdot\dev_3\log(V) + \bulk\cdot\tr[\log V]\cdot\id
\end{equation}
for all $V\in\PSymn$.
\end{corollary}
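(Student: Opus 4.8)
The plan is to recognize that both assertions are immediate specializations of Proposition~\ref{prop:deductionFromSuperposition}, obtained simply by relabelling the stress response function and its argument. Recall that Proposition~\ref{prop:deductionFromSuperposition} takes as input any continuous, isotropic tensor function $\stress\colon\PSymn\to\Symn$ whose only zero is at $\id$ and which satisfies the multiplicative-to-additive superposition law on $\PSymn$, and concludes that $\stress$ necessarily has the form \eqref{eq:generalStressStretchRelationDeviatorVersion} for some $G,\bulk\in\R\setminus\{0\}$.

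For the first assertion I would set $\stress:=\sigma$ and $\stretch:=V$. Since $V=\sqrt{FF^T}\in\PSymn$ and, conversely, every element of $\PSymn$ arises as $\sqrt{FF^T}$ for some $F\in\GLpn$ (e.g.\ $F=V$ itself), the Cauchy stress viewed as a function of $V$ has domain exactly $\PSymn$. The four hypotheses of the corollary — continuity, isotropy, the equivalence $\sigma(V)=0\Leftrightarrow V=\id$, and $\sigma(V_1 V_2)=\sigma(V_1)+\sigma(V_2)$ for all $V_1,V_2\in\PSymn$ — are verbatim the hypotheses of Proposition~\ref{prop:deductionFromSuperposition} with $\stress$ and $\stretch$ renamed. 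Hence that proposition applies directly and yields constants $G,\bulk\in\R\setminus\{0\}$ with $\sigma(V)=2\,\shear\cdot\dev_3\log(V)+\bulk\cdot\tr[\log V]\cdot\id$, which is \eqref{eq:Hencky1928Law}.

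The second assertion is proved in exactly the same way, now with $\stress:=\tau$ and $\stretch:=V$; Proposition~\ref{prop:deductionFromSuperposition} again produces the claimed representation \eqref{eq:HenckyHyperelasticLaw}. There is essentially no obstacle here: the entire analytic content — the passage from continuity and the superposition law to the logarithmic form, via the power rule of Lemma~\ref{lemma:powersOfStretches} and the decomposition of a diagonal stretch into a dilation and two shears — has already been carried out in the proof of Proposition~\ref{prop:deductionFromSuperposition}, and that argument nowhere used the specific identification of $\stress$ with the Biot stress or of $\stretch$ with the right stretch tensor. The only point worth noting is that the superposition hypothesis, stated for all pairs in $\PSymn$, is only non-vacuous when the product $V_1 V_2$ again lies in $\PSymn$, i.e.\ when $V_1$ and $V_2$ commute; this is automatically consistent with the logarithmic conclusion, since $\log(V_1 V_2)=\log V_1+\log V_2$ holds precisely for commuting $V_1,V_2$.
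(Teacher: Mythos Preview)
Your proposal is correct and matches the paper's approach: the corollary is stated immediately after Proposition~\ref{prop:deductionFromSuperposition} without a separate proof precisely because it is the direct specialization you describe, with $\stress$ taken to be $\sigma$ (respectively $\tau$) and $\stretch$ taken to be $V$. Your closing remark about commutativity is a sensible clarification but not needed for the argument, since Proposition~\ref{prop:deductionFromSuperposition} itself is stated under the same formal hypothesis.
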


It was shown by Hencky that only the latter of these two stress response functions constitutes a \emph{hyperelastic} law of elasticity: the stress-stretch relation in equation \eqref{eq:HenckyHyperelasticLaw} can be obtained from the \emph{quadratic Hencky strain energy}
\begin{equation}
	W(V) = \shear\,\norm{\dev_3 \log V}^2 + \frac{\bulk}{2}\,[\tr(\log V)]^2\,.
\end{equation}
This energy function has also been given another rigorous justification, based on purely differential geometric reasoning, as the geodesic distance of the deformation gradient $F$ to the special orthogonal group $\SOn$ with respect to the canonical left-invariant metric on $\GLpn$ \cite{Neff_Eidel_Osterbrink_2013, Neff_Osterbrink_hencky13,Neff_Nagatsukasa_logpolar13}. The continuing application of the Hencky strain energy or modifications thereof is described in \cite{NeffGhibaLankeit}, see also \cite{NeffGhibaSteigmann}.

\subsection{The axioms in the linear case of infinitesimal elasticity}
Some of Becker's assumptions seem to have been adapted from simple results for the linearised theory of elasticity. To explain his motivation it is insightful to discover some of these parallels. The most general stress-strain relation for isotropic homogeneous materials in the case of linear elasticity is
\[
	\sigma \;=\; 2\,\shear\,\eps + \Lambda\,\tr(\eps)\cdot\id \;=\; 2\,\shear\,\dev_3\eps + \bulk\,\tr(\eps)\cdot\id \;=\; \frac{E}{1+\nu}\,\dev_3\eps + \frac{E}{3(1-2\nu)}\,\tr(\eps)\cdot\id\,,
\]
where $\sigma$ is the linearized stress tensor and $\eps = \sym \grad u = \half(\grad u + \grad u^T)$ is the linearized strain tensor of the deformation $\varphi(x) = x + u(x)$ with the displacement $u:\Omega_0\subset\R^3\to\R^3$. Note that this linear relation is invertible with
\[
	\eps = \frac{1}{2\,G}\dev_3 \sigma + \frac{1}{9\,\bulk}\tr(\sigma)\cdot\id\,,
\]
similar to the first equality in equation \eqref{eq:gerneralStressStretchRelationInverted}.

Since the trace operator is the linear approximation of the determinant at $\id$, i.e. $\det(\id+H) = 1 + \tr(H) + \mathcal{O}(\norm{H}^2)$, the first order approximation
\[
	\det(\grad\varphi) \;=\; \det(\id+\grad u) \;\approx\; 1 + \tr(\grad u)
\]
holds for sufficiently small $\norm{\grad u}$. Therefore, the condition
\[
	\det(U) = \det(\grad\varphi) = 1
\]
can be linearized to the equation
\[
	\tr (\grad u) \;=\; \tr \eps \;=\; 0\,.
\]
For $\Lambda \neq -\frac23\,\shear$, this is the case if and only if
\[
	0 = \tr \sigma = (2\,\shear + 3\,\Lambda) \cdot \tr\eps\,,
\]
thus (linearized) isochoric deformations always correspond to trace free stress tensors $\sigma$ and vice versa, analogous to Axiom 1 for the nonlinear case. Similarly volumetric stresses occur if and only if the strain is (linearly) volumetric (Axiom 2):
\[
	\eps = \matr{a&0&0\\ 0&a&0\\ 0&0&a} \quad \Longleftrightarrow \quad \sigma = \matr{(2\shear+3\Lambda)a&0&0\\ 0&(2\shear+3\Lambda)a&0\\ 0&0&(2\shear+3\Lambda)a}\,.
\]
Furthermore it is easy to see that the linearized shear strain
\[
	\eps = \sym\left[ \matr{1&\gamma&0\\ 0&1&0\\ 0&0&1} - \id\right] = \matr{0&\frac\gamma2&0\\ \frac\gamma2&0&0\\ 0&0&0}
\]
corresponds to the shear stress (Axiom 1)
\[
	\sigma = \matr{0&\shear\cdot\gamma&0\\ \shear\cdot\gamma&0&0\\ 0&0&0}\,.
\]
Finally, we consider two deformation gradients $\grad\varphi_1 = \id+\grad u_1$ and $\grad\varphi_2 = \id+\grad u_2$ with the corresponding strain tensors $\eps_1$ and $\eps_2$. Then
\begin{align*}
	\grad \varphi_1 \cdot \grad\varphi_2 \;=\; (\id+\grad u_1)\cdot(\id+\grad u_2) \;&=\; \id + \grad u_1 + \grad u_2 + \grad u_2 \cdot \grad u_2\,.
\end{align*}
By omitting the higher order term $\grad u_1 \cdot \grad u_2$, we find the linear approximation
\[
	\grad\varphi_1\cdot\grad\varphi_2 \;\approx\; \id + \grad u_1 + \grad u_2\,,
\]
hence the strain tensor $\eps$ corresponding to $\grad\varphi_1\cdot\grad\varphi_2$ has the linear approximation
\[
	\eps \;\approx\; \sym(\grad u_1 + \grad u_2) \;=\; \eps_1 + \eps_2\,.
\]
Thus, in the linear case, the multiplicative superposition of deformation gradients corresponds to an additive composition of the strain tensors. The law of superposition (Axiom 3) can therefore be linearized to
\[
	\sigma(\eps_1 + \eps_2) = \sigma(\eps_1) + \sigma(\eps_2)\,,
\]
which obviously holds for all $\eps_1,\eps_2\in\Symn$.

The linear analogies of the three main axioms can therefore be summarized as follows.
\axiombox{Axiom 1, linear version:}{
The equivalence 
\[
	\eps = \matr{0&\frac\gamma2&0\\ \frac\gamma2&0&0\\ 0&0&0} \qquad\Longleftrightarrow\qquad \sigma(\eps) = \matr{0&\shear\cdot\gamma&0\\ \shear\cdot\gamma&0&0\\ 0&0&0}
\]
holds for all $\gamma\in\R$.
}
\axiombox{Axiom 2, linear version:}{
The equivalence
\[
	\eps = \matr{a&0&0\\ 0&a&0\\ 0&0&a} \quad \Longleftrightarrow \quad \sigma(\eps) = \matr{(2\shear+3\Lambda)a&0&0\\ 0&(2\shear+3\Lambda)a&0\\ 0&0&(2\shear+3\Lambda)a}
\]
holds for all $a\in\R$.
}
\axiombox{Axiom 3, linear version:}{
The equality
\[
	\sigma(\eps_1 + \eps_2) = \sigma(\eps_1) + \sigma(\eps_2)
\]
holds for all $\eps_1,\eps_2\in\Symn$.
}
Note that many of the properties listed in section \ref{section:reflectionsOnConstitutiveAssumptions} have linearized counterparts which are satisfied by the general linear model, e.g. the (linearized) tension-compression symmetry $\sigma(-\eps) = -\sigma(\eps)$.

\subsubsection{Linearised shear}
Becker's comments on the finite shear response show similiarities to the linear case as well. We consider the \emph{linearised shear stress}
\[
	\sigma = \matr{0&s\\ s&0}
\]
with the corresponding \emph{linear shear strain}\footnote{The linear shear strain appears in the linearisation of a simple shear shear deformation, which can be written as
\[\matr{1&\gamma\\0&1} = \id+\matr{0&\frac\gamma2\\\frac\gamma2&0} + \matr{0&\frac\gamma2\\-\frac\gamma2&0} = \id+\eps+W\,,\] where $\eps\in\Symn$ is a linear shear strain and $W\in\so(3)$ corresponds to an \emph{infinitesimal rotation}.}
\[
	\eps = \matr{0&\frac\gamma2\\ \frac\gamma2&0}
\]
in the two dimensional case. Then for given $n=(n_1,n_2)^T\in\R^2$ with $\norm{n}=1$ we can compute
\[
	\norm{\sigma\,n} = \norm{\matr{s\,n_2\\ s\,n_1}} = s\cdot\norm{n} = s\,.
\]
In order to find a direction of \emph{maximum tangential linearised stress} (not the tangential load), we decompose the resultant traction $\sigma\,n$ in direction of a given unit normal vector $n$ into a normal and a tangential part:
\[
	\norm{\sigma\,n}^2 = \underbrace{\innerproduct{\sigma\,n,\: n}^2}_{\text{normal}} + \underbrace{\innerproduct{\sigma\,n,\: n_\perp}^2}_{\text{tangential}}\,,
\]
where $n_\perp$ is a unit vector normal to $n$. Since for such a stress the resultant $\norm{\sigma\,n}^2 = s^2$ is constant\footnote{Note carefully that this is no longer true in the nonlinear case: $\frac{\norm{\sigma n}}{\norm{n}}$ is not constant for $\sigma=\diag(s\,\alpha,-s\,\alpha\inv,1)$ and $\alpha>1$, i.e. for a (nonlinear) Cauchy stress tensor $\sigma$ corresponding to a pure shear load $\Biot = \diag(s,-s,0)$ and a shear deformation $F=\diag(\alpha,\frac1\alpha,1)$.}, i.e. independent of the unit normal $n$, the amount of tangential stress
\[
	\innerproduct{\sigma\,n,\: n_\perp}^2 = s^2 - \innerproduct{\sigma\,n,\: n}^2
\]
assumes its maximum among all $n\in\R^2$ with $\norm{n}=1$ if and only if $\innerproduct{\sigma\,n,\: n}^2$ attains its minimum. We find
\begin{align*}
	\innerproduct{\sigma\,n,\: n}^2 = \innerproduct{\matr{s\,n_2\\ s\,n_1}, \; \matr{n_1\\ n_2}} = s^2\,n_1\,n_2\,,
\end{align*}
which is minimal if and only if $n_1=0$ or $n_2=0$. Since the directions of the principal axes are given by the eigenvectors $(1,1)^T$ and $(1,-1)^T$ of $\eps$, the vectors $n_1$ and $n_2$ cut these axes at angles of $45^\circ$.

\section{Applications and properties of Becker's law of elasticity}\label{section:properties}
\subsection{Infinitesimal deformations}\label{subsection:infinitesimalDeformations}
For small deformations the linear approximation
\begin{align*}
	T(\id+\eps) \;&=\; 2\,G\cdot\log(\id+\eps) \;+\; \Lambda\cdot\tr[\log (\id+\eps)]\cdot\id\\
	\;&=\; 2\,G\cdot (\eps + \mathcal{O}(\norm{\eps}^2)) \;+\; \Lambda\cdot(\tr[\eps + \mathcal{O}(\norm{\eps}^2)])\cdot\id \;=\; 2\,G\cdot \eps \;+\; \Lambda\cdot\tr (\eps)\cdot\id \;+\; \mathcal{O}(\norm{\eps}^2)
\end{align*}
shows that the stress-stretch relation is compatible with the model of linear elasticity if and only if $G$ and $\Lambda$ are the two Lam\'e constants. In this case the additional constraints
\[
	\shear > 0\,, \quad \bulk>0
\]
follow from the uniform positivity of the linear strain energy density
\begin{equation*}
	W_{\mathrm{lin}}(\eps) \;=\; G\norm{\dev_3\sym\eps}^2 \,+\, \frac\bulk2[\tr(\eps)]^2 \;=\; G\,\norm{\sym\eps}^2 \,+\, \frac\Lambda2\,[\tr(\eps)]^2\,.
\end{equation*}

\subsection{Uniaxial stresses}
If the initial load is given by a Biot stress tensor of the form
\[
	\Biot = \dmatr{0}{Q}{0}\,,
\]
we can give an explicit formula for the stretch tensor $U$ corresponding to $\Biot$: since $\tr\Biot=Q$ and
\[
	\dev_3\Biot = \Biot-\frac13\tr[\Biot]\cdot\id = \dmatr{-\frac{Q}{3}} {\frac{2\,Q}{3}} {-\frac{Q}{3}}
\]
we can use equation \eqref{eq:gerneralStressStretchRelationInverted} to find
\[
	U(\Biot) = e^{\frac{Q}{9\,\bulk}}  \;\cdot\;  \dmatr{e^{-\frac{Q}{2G\cdot3}}} {e^{\frac{2\,Q}{2G\cdot3}}} {e^{-\frac{Q}{2G\cdot3}}} = e^{\frac{Q}{9\,\bulk}}  \;\cdot\;  \dmatr{e^{-\frac{Q}{6\,G}}} {e^{\frac{Q}{3\,G}}} {e^{-\frac{Q}{6\,G}}}\,.
\]
In particular, the deformation along the axis of stress is given by\footnote{More details on the conversion of the material parameters can be found in Appendix \ref{section:moduli}.}
\begin{equation}\label{eq:uniaxialStressStretchRelationMainAxis}
	\lambda_2 \;=\; e^{\frac{Q}{9\,\bulk}} \cdot e^{\frac{Q}{3\,G}} \;=\; e^{Q\cdot(\frac{1}{9\bulk}+\frac{1}{3G})} \;=\; e^{\frac{Q}{E}}\,,
\end{equation}
while the deformation along the axes orthogonal to the stress axis is
\begin{equation}\label{eq:uniaxialStressStretchRelationOtherAxes}
	\lambda_1 = \lambda_3 = e^{\frac{Q}{9\,\bulk}} \cdot e^{-\frac{Q}{6\,G}} = e^{Q\cdot(\frac{1}{9\bulk}-\frac{1}{6G})} \;=\; e^{-\frac{\nu\,Q}{E}}\,.
\end{equation}
The factors $e^{\frac{Q}{9\,\bulk}}$ and $e^{\frac{Q}{3\,G}}$ appearing in equation \eqref{eq:uniaxialStressStretchRelationMainAxis} are the \emph{dilational stretch} and the \emph{shear stretch}, respectively, as given by Becker in equation \eqref{5} on page 345 as his main result. Furthermore, equation \eqref{eq:uniaxialStressStretchRelationOtherAxes} shows that in the case $9\,\bulk = 6\,G$, which corresponds to $\nu=0$ for Poisson's ratio $\nu$, the stretch along the unstressed axes is $1$. Therefore, as in the linear model, there is no lateral contraction in Becker's model for $\nu=0$. A similar result holds for Hencky's elastic law \cite{vallee1978,vallee2008}.

\subsubsection{Application to incompressible materials}\label{section:incompressibleCase}
To apply the uniaxial stress response to \emph{incompressible materials} we will now consider the limit $\bulk\to\infty$, i.e. we approximate the incompressible case through the \emph{nearly incompressible} case with a sufficiently large ratio $\frac{\bulk}{G}$. From equations \eqref{eq:uniaxialStressStretchRelationMainAxis} and \eqref{eq:uniaxialStressStretchRelationOtherAxes} we readily obtain
\[
	\lim_{\bulk\to\infty} \lambda_2 = e^{\frac{Q}{3\,G}}
\]
as well as
\[
	\lim_{\bulk\to\infty} \lambda_1 = \lim_{\bulk\to\infty} \lambda_3 = e^{-\frac{Q}{6\,G}}\,,
\]
thus in our theory uniaxial stress induces deformations of the form
\[
	U = \dmatr{\frac{1}{\sqrt\lambda}} {\lambda} {\frac{1}{\sqrt\lambda}} = \dmatr{e^{-\frac{Q}{6\,G}}} {e^{\frac{Q}{3\,G}}} {e^{-\frac{Q}{6\,G}}}
\]
in the incompressible case. Going to the inverse we obtain the formula
\begin{equation}\label{eq:imbertFormula}
	Q = 3\,G\cdot \ln(\lambda) = E\cdot\ln(\lambda)\,,
\end{equation}
where $E = 3\,G$ denotes \emph{Young's modulus} for incompressible materials.

Equation \eqref{eq:imbertFormula} is identical to the uniaxial stress-stretch relation given by Imbert in 1880 as a phenomenological model for the deformation of vulcanized rubber bands under tension \cite[p. 53]{imbert1880}. Similarly, in 1893 Hartig applied the same logarithmic law to describe the uniaxial tension and compression of rubber \cite{hartig1893}. A comparison of Becker's results for very large strain to experimental data by L.R.G. Treloar for the uniaxial deformation of vulcanized rubber \cite{jones1975} as well as the corresponding stress responses for the quadratic Hencky energy and Ogden's elasticity model \cite{ogden2004} are shown in Fig. \ref{figure:incompressibleCurve}. Another possible way to apply Becker's law to incompressible materials is described in section \ref{section:valanisLandel}.
\begin{figure}[h]
	\centering
	\begin{tikzpicture}[scale=1]
		\ifshowimages
			\input{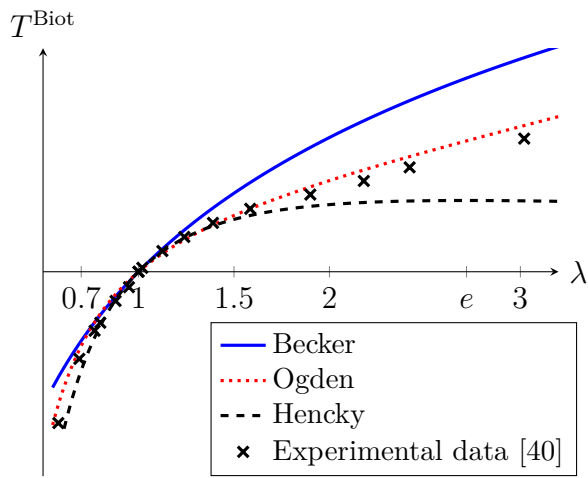}
		\fi
	\end{tikzpicture}
	\caption{Comparison of stress responses for incompressible materials.}
	\label{figure:incompressibleCurve}
\end{figure}

\subsection{Becker's law of elasticity for simple shear}\label{section:simpleGlideComputation}
Consider a simple glide deformation of the form
\[
	F = \matr{1&\gamma&0\\ 0&1&0\\ 0&0&1}
\]
with $\gamma>0$, c.f. section \ref{section:simpleGlide}. Then the polar decomposition of $F = R\cdot U$ into the right Biot stretch tensor $U=\sqrt{F^TF}$ of the deformation and the orthogonal polar factor $R = FU\inv$ is given by
\[
	U \;=\; \frac{1}{\sqrt{\gamma^2+4}} \cdot \matr{2&\gamma&0\\ \gamma&\gamma^2+2&0\\ 0&0&\sqrt{\gamma^2+4}}\,, \qquad
	R \;=\; \frac{1}{\sqrt{\gamma^2+4}}\cdot\matr{2&\gamma&0\\-\gamma&2&0\\0&0&\sqrt{\gamma^2+4}}
\]
Further, $U$ can be diagonalized to
\begin{align*}
	U \;=\; L \,\cdot\, \dmatr{1}{\frac12(\sqrt{\gamma^2+4}+\gamma)}{\frac12(\sqrt{\gamma^2+4}-\gamma)} \,\cdot\, L\inv \;=\; L \,\cdot\, \dmatr{1}{\lambda_1}{\frac{1}{\lambda_1}} \,\cdot\, L\inv\,,
\end{align*}
where
\[
	L \;=\; \matr{0&2&-2\\ 0&\sqrt{\gamma^2+4}+\gamma&\sqrt{\gamma^2+4}-\gamma\\ 1&0&0}
\]
and $\lambda_1 = \frac12(\sqrt{\gamma^2+4}+\gamma)$ denotes the first eigenvalue of $U$, and the principal logarithm of $U$ is
\begin{align*}
	\log U \;&=\; L \,\cdot\, \log \dmatr{1}{\lambda_1}{\frac{1}{\lambda_1}} \,\cdot\, L\inv \;=\; L \,\cdot\, \dmatr{0}{\ln(\lambda_1)}{-\ln(\lambda_1)} \,\cdot\, L\inv\\[2mm]
	&=\; \frac{1}{\sqrt{\gamma^2+4}} \cdot \matr{-\gamma\,\ln(\lambda_1) & 2\,\ln(\lambda_1) & 0 \\ 2\,\ln(\lambda_1) & \gamma\,\ln(\lambda_1) & 0 \\ 0&0&0}\,.
\end{align*}
Then according to Becker's law of elasticity, the first Piola-Kirchhoff stress tensor $\PKone$ corresponding to $F$ computes to
\begin{align*}
	\PKone(F) \;&=\; R\cdot\Biot(U) \;=\; R\cdot (2\,G\cdot\log(U) \;+\; \Lambda\cdot\ln(\det U)\cdot\id) \;=\; 2\,G\cdot R \cdot \log U\\[2mm]
	&=\; \frac{2\,G}{\gamma^2+4} \cdot \matr{2&\gamma&0\\-\gamma&2&0\\0&0&\sqrt{\gamma^2+4}} \cdot \matr{-\gamma\,\ln(\lambda_1) & 2\,\ln(\lambda_1) & 0 \\ 2\,\ln(\lambda_1) & \gamma\,\ln(\lambda_1) & 0 \\ 0&0&0} \\[2mm]
	&=\; \frac{2\,G}{\gamma^2+4} \cdot \matr{0& (4+\gamma^2)\,\ln(\lambda_1)& 0 \\ (4+\gamma^2)\,\ln(\lambda_1) &0&0 \\ 0&0&0}\\[2mm]
	&=\; 2\,G\cdot \ln(\tfrac12(\sqrt{\gamma^2+4}+\gamma)) \cdot \matr{0& 1& 0 \\ 1 &0&0 \\ 0&0&0}\,.
\end{align*}
Finally, the Cauchy stress $\sigma$ for the simple glide deformation $F$ is
\[
	\sigma \;=\; \frac{1}{\det F} \cdot \PKone(F) \cdot F^T \;=\; 2\,G\cdot \ln(\tfrac12(\sqrt{\gamma^2+4}+\gamma)) \cdot \matr{\gamma&1&0 \\ 1&0&0 \\ 0&0&0}\,.
\]
Note that $\sigma$ is independent of the Lam\'e constant $\Lambda$. In particular, the \emph{simple shear stress} $\sigma_{12}$ corresponding to the amount of shear $\gamma$ is given by
\[
	\sigma_{12} \;=\; \ln(\tfrac12(\sqrt{\gamma^2+4}+\gamma))\,.
\]
for Becker's law of elasticity. Fig. \ref{figure:simpleGlideExperimentalData} shows a comparison of the simple shear stress resulting from different constitutive laws with experimental data measured by Treloar \cite{jones1975} for shear deformations of vulcanized rubber.
\begin{figure}[t]
	\centering
	\begin{tikzpicture}[scale=1]
		\ifshowimages
			\input{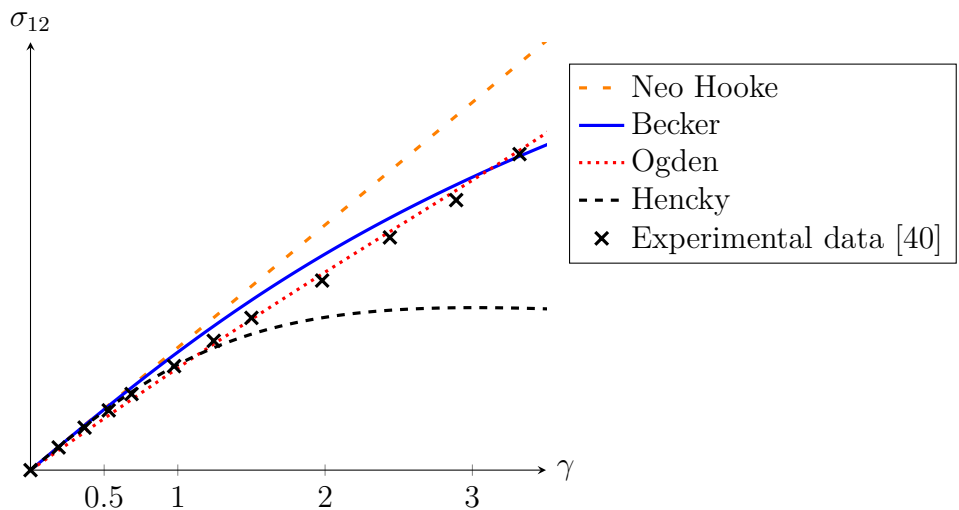}
		\fi
	\end{tikzpicture}
	\caption{Shear stress in a simple glide deformation.}
	\label{figure:simpleGlideExperimentalData}
\end{figure}

\subsection{A comparison of Becker's and Hencky's laws of elasticity}
\label{section:beckerHenckyComparison}
The stress-stretch relation corresponding to the quadratic Hencky strain energy is, in terms of the Kirchhoff stress $\tau$, the left Biot stretch tensor $V$ and the Finger tensor $B$, given by
\begin{equation}\label{eq:tauHencky}
	\tauHencky = 2\,\shear\, \log V + \Lambda \tr(\log V) \cdot \id = \shear \log B + \frac\Lambda2 \tr(\log B)\cdot\id\,,
\end{equation}
while Becker's stress-stretch relation, expressed in terms of the Biot stress $\Biot$, the right stretch tensor $U$ and the right Cauchy-Green deformation tensor $C$, is
\[
	\BiotBecker = 2\,\shear\, \log U + \Lambda \tr(\log U) \cdot \id = \shear \log C + \frac\Lambda2 \tr(\log C)\cdot\id\,.
\]
Since, in general,
\[
	\Biot = U\cdot\PKtwo = \det(U)\cdot U\cdot F\inv \cdot \Cauchy \cdot F^{-T}
\]
and
\[
	\tau = \det(U)\cdot\sigma\,,
\]
where $\PKtwo$ is the symmetric second Piola-Kirchhoff stress and $\sigma$ is the Cauchy stress tensor, we find
\begin{align*}
	\Cauchy &= \det(U)\inv \cdot F \cdot U\inv \cdot \Biot \cdot F^T\\
	\Longrightarrow \qquad \tau &= \underbrace{F \cdot U\inv}_{=R} \,\cdot\, \Biot \cdot F^T = \underbrace{V\inv \cdot F}_{=R} \,\cdot\, \Biot \cdot F^T\,,
\end{align*}
where the last equality follows directly from the polar decomposition $F=R\,U=VR$ of the deformation gradient $F$. We employ the identity $C=F^TF = F\inv FF^T F = F\inv B F$ to compute
\begin{align*}
	\tauBecker = V\inv \cdot F \cdot \BiotBecker \cdot F^T &= V\inv F \cdot \left[\shear \log C + \frac\Lambda2 \tr(\log C)\cdot\id\right] \cdot F^T\\
	&= V\inv F \cdot \left[\shear \log (F\inv B F) + \frac\Lambda2 \tr(\log (F\inv B F))\cdot\id\right] \cdot F^T\\
	&= V\inv F \cdot \left[\shear \, F\inv(\log B)\,F + \frac\Lambda2 \tr(F\inv(\log B)F)\cdot\id\right] \cdot F^T\\
	&= \shear \: V\inv FF\inv(\log B)\,FF^T + \frac\Lambda2 \tr(\log B)\cdot V\inv \smash{\underbrace{FF^T}_{=B}}\\
	&= \shear \: V\inv (\log B)\,B + \frac\Lambda2 \tr(\log B)\cdot V\inv B\\
	&= V\inv\cdot \left[ \shear (\log B) + \frac\Lambda2 \tr(\log B)\cdot\id\right] \cdot B \quad=\quad V\inv\, \tauHencky\: B\,.
\end{align*}
The symmetric tensors $V\inv$, $\tauHencky$ and $B$ commute because their principal axes coincide, therefore
\begin{equation}\label{eq:tauBeckerInHenckyTerms}
	\tauBecker \:=\: V\inv\, \tauHencky\: B \:=\: V\inv\, B \cdot \tauHencky \:=\: RF\inv\: FF^T \cdot \tauHencky \:=\: R F^T\cdot\tauHencky \:=\: V\cdot\tauHencky\,.
\end{equation}
This identity allows us to obtain an upper estimate for the difference between the Kirchhoff stress corresponding to the Hencky energy and the one given by Becker's stress-stretch relation:
\[
	\tauBecker = V\cdot\tauHencky = \tauHencky + (V-\id)\cdot\tauHencky \quad \Longrightarrow \quad \norm{\tauBecker - \tauHencky} \leq \norm{V-\id}\cdot\norm{\tauHencky}\,,
\]
where $\norm{\,.\,}$ denotes the Frobenius matrix norm on $\Rnn$. Thus, for very small elastic strains $\norm{V-\id} \ll 1$, the corresponding Kirchhoff tensors $\tauBecker$ and $\tauHencky$ coincide to lowest order.

\subsection{Hyperelasticity}
Unlike Hencky's logarithmic stress-stretch relation, Becker's idealized response is generally not hyperelastic for arbitrary parameters $G$ and $K$. Incidentally, this result was also shown by Carroll \cite{carroll2009}, who gave an explicit example of a cycle of loading and unloading without conservation of energy, showing that the elastic behaviour modelled, in fact, by Becker's law\footnote{Although the stress response considered by Carroll is identical to the one deduced by Becker, Carroll seems not to be aware of Becker's work.} is not path-independent.
\begin{proposition}
The stress-stretch relation
\[
	\Biot(U) = 2\,G\cdot\log(U) \;+\; \Lambda\cdot\tr[\log U]\cdot\id
\]
is hyperelastic if and only if $\Lambda=0$ or, equivalently, $\nu=0$ for Poisson's number $\nu$. In this case\footnote{For example, the parameter $\nu=0$ is used to model the elastic behaviour of cork.} the energy is given by
\begin{equation}\label{eq:beckerEnergyFunction}
	\WBecker(U) \;=\; 2\,G\; [\,\iprod{U,\,\log(U) - \id} + 3\,] \;=\; 2\,G\; [\,\iprod{\exp(\log U),\,\: \log U - \id} + 3\,]\,,
\end{equation}
which is the maximum entropy function.
\end{proposition}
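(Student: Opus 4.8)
The plan is to read ``hyperelastic'' in the strict sense that the Biot stress derives from an energy: there exists $W\colon\PSymn\to\R$ with $\Biot(U)=\mathrm{D}_U W(U)$. For an objective, isotropic $W$ one has $W(F)=W(U)$ and $\Biot=R^T\mathrm{D}_F W(F)=\mathrm{D}_U W(U)$, and since $\Biot$ and $U$ are coaxial in the isotropic case the stress power reduces to $\langle S_1,\dot F\rangle=\langle\Biot,\dot U\rangle$; hence hyperelasticity of the given response is equivalent to $U\mapsto\Biot(U)$ being a conservative (path-independent) field on $\PSymn$. As $\PSymn$ is an open convex --- hence simply connected --- subset of $\Symn$, the Poincar\'e lemma turns this into symmetry of the Fr\'echet derivative, $\langle\mathrm{D}\Biot(U)[H_1],H_2\rangle=\langle\mathrm{D}\Biot(U)[H_2],H_1\rangle$ for all $U\in\PSymn$ and $H_1,H_2\in\Symn$. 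Equivalently, using the principal stretches, hyperelasticity holds iff there is a symmetric $\widehat W(\lambda_1,\lambda_2,\lambda_3)$ on $(\R^+)^3$ with $\partial\widehat W/\partial\lambda_i=T_i$, where the principal Biot stresses are $T_i=2\,G\ln\lambda_i+\Lambda\sum_k\ln\lambda_k$; by the Poincar\'e lemma on $(\R^+)^3$ this amounts to $\partial T_i/\partial\lambda_j=\partial T_j/\partial\lambda_i$ for $i\ne j$.

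For the direction ``hyperelastic $\Rightarrow\Lambda=0$'' I would simply differentiate: $\partial T_i/\partial\lambda_j=2\,G\,\delta_{ij}/\lambda_j+\Lambda/\lambda_j$, so the integrability condition for $i\ne j$ reads $\Lambda/\lambda_j=\Lambda/\lambda_i$ for all $\lambda_i,\lambda_j>0$, which forces $\Lambda=0$. (Intrinsically: $\mathrm{D}\log(U)$ is self-adjoint for $\langle\cdot,\cdot\rangle$ --- a standard property of primary matrix functions at symmetric arguments \cite{higham2008} --- and $\mathrm{D}\log(U)[\id]=U^{-1}$, so the $2\,G\log U$ term always has symmetric derivative, whereas the $\Lambda$-term contributes $\Lambda\,\tr(U^{-1}H_1)\,\tr(H_2)$, which is symmetric in $H_1,H_2$ for every $U$ only if $\Lambda=0$; alternatively one may, as Carroll did, exhibit a closed loop $U(t)$ with $\oint\langle\Biot(U),\dot U\rangle\,\mathrm dt\ne0$ when $\Lambda\ne0$.) The equivalence $\Lambda=0\iff\nu=0$ is then the algebraic identity coming from $\Lambda=\bulk-\tfrac23 G$ and $\nu=\tfrac{3\bulk-2G}{2(3\bulk+G)}$.

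For the direction ``$\Lambda=0\Rightarrow$ hyperelastic'', assume $\Lambda=0$, so that $\Biot(U)=2\,G\log U$, and verify directly that $\WBecker$ is a potential for it. I would use two facts about the matrix logarithm on $\PSymn$: $\mathrm{D}\log(U)$ is self-adjoint for $\langle\cdot,\cdot\rangle$, and $\mathrm{D}\log(U)[U]=\id$ (differentiate $\log((1+t)U)=\log U+\ln(1+t)\,\id$ at $t=0$); together these give $\langle U,\mathrm{D}\log(U)[H]\rangle=\langle\mathrm{D}\log(U)[U],H\rangle=\langle\id,H\rangle=\tr H$. The product rule then yields, for every $H\in\Symn$,
\[
	\mathrm{D}_U\langle U,\log U-\id\rangle[H]=\langle H,\log U-\id\rangle+\langle U,\mathrm{D}\log(U)[H]\rangle=\langle H,\log U-\id\rangle+\tr H=\langle\log U,H\rangle,
\]
hence $\mathrm{D}_U\WBecker(U)=2\,G\log U=\Biot(U)$; equivalently, in principal stretches $\WBecker=2\,G\sum_i(\lambda_i\ln\lambda_i-\lambda_i+1)$, whose partials are exactly $\partial\WBecker/\partial\lambda_i=2\,G\ln\lambda_i$. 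Since $\WBecker(\id)=2\,G\,[\langle\id,-\id\rangle+3]=2\,G\,[-3+3]=0$, the energy is correctly normalised, the second form in \eqref{eq:beckerEnergyFunction} is the same expression because $\exp(\log U)=U$, and the identification of $\WBecker$ with the maximum entropy function follows by inspection of this explicit formula.

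The main obstacle is not any single computation but pinning down the hyperelasticity criterion: one must justify that the relevant stress power really is $\langle\Biot,\dot U\rangle$ (which uses the coaxiality of $\Biot$ and $U$ in the isotropic case) and that, on the simply connected domain $\PSymn$, local exactness of the ``one-form'' $\langle\Biot(U),\mathrm dU\rangle$ upgrades to the global existence of $W$. Once this is in place, both implications are elementary calculus with the matrix logarithm, and the explicit energy $\WBecker$ settles the ``if'' direction outright.
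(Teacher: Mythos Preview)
Your argument is correct, and in fact it supplies considerably more than the paper does: the paper states this proposition without proof. The only justification offered there is the remark preceding the proposition that ``this result was also shown by Carroll \cite{carroll2009}, who gave an explicit example of a cycle of loading and unloading without conservation of energy'' --- i.e.\ the paper outsources the non-hyperelasticity claim for $\Lambda\neq0$ to Carroll's closed-loop counterexample and gives no argument at all for the energy formula in the case $\Lambda=0$.

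Your approach is cleaner and self-contained. The integrability computation $\partial T_i/\partial\lambda_j=\Lambda/\lambda_j$ versus $\partial T_j/\partial\lambda_i=\Lambda/\lambda_i$ for $i\neq j$ settles the ``only if'' direction in one line, and your intrinsic version --- using self-adjointness of $\mathrm{D}\log(U)$ and $\mathrm{D}\log(U)[U]=\id$ to compute the gradient of $\langle U,\log U-\id\rangle$ --- is a nice coordinate-free verification of the energy. Both are preferable to invoking an explicit non-conservative loop, since they show exactly \emph{why} the $\Lambda$-term obstructs exactness (the asymmetric pairing $\tr(U^{-1}H_1)\,\tr(H_2)$) rather than merely exhibiting one bad path. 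Your care in identifying the correct work-conjugate pairing $\langle\Biot,\dot U\rangle$ and in invoking simple connectedness of $\PSymn$ to pass from closedness to exactness is also something the paper does not address.
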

However, Becker's law is, of course, \emph{Cauchy-elastic} for all admissible choices of parameters since the Cauchy stress depends only on the state of deformation.

Note that the elastic energy $\WBecker$ given by equality \eqref{eq:beckerEnergyFunction} does not fulfil some of the constitutive properties listed in section \ref{section:reflectionsOnConstitutiveAssumptions}. For example, $\det F \to 0$ does not generally imply $\WBecker(F)\to\infty$; in fact, $\WBecker(F)$ remains finite even for $F=0$. However, the implication
\[
	\det F \to 0 \quad\Longrightarrow\quad \Biot(F) \to \infty
\]
holds true for Becker's elastic law, even in the case $\nu=0$.
\DeclareRobustCommand\tikzcaptionEnergy{\tikz[baseline=-0.6ex]{\draw[ultra thick, color=red, dotted] (0,0) to (.5,0);}}
\DeclareRobustCommand\tikzcaptionBiot{\tikz[baseline=-0.6ex]{\draw[ultra thick, color=blue] (0,0) to (.5,0);}}
\begin{figure}[h]
	\centering
	\begin{tikzpicture}[scale=1]
		\ifshowimages
			\input{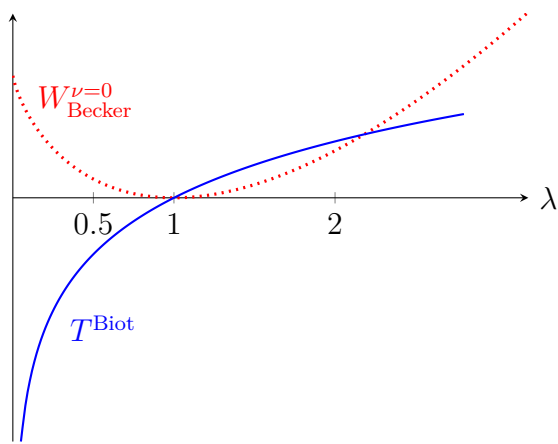}
		\fi
	\end{tikzpicture}
	\caption{Energy (\tikzcaptionEnergy) and Biot stress (\tikzcaptionBiot) according to Becker's law for extreme volumetric stretches $\lambda$; the tangent of $\WBecker$ in $0$ is vertical.}
\end{figure}

\subsubsection{Comparison to the Valanis-Landel energy}\label{section:valanisLandel}
In terms of the principal stretches $\lambda1_1,\lambda_2,\lambda_3$, i.e. the eigenvalues of a stretch tensor $U$, the energy function $\WBecker$ can be expressed as
\begin{align*}
	\WBecker(U) \;&=\; 2\,G\; [\,\iprod{U,\,\log(U) - \id} + 3\,]\\
	&=\; 2\,G \cdot \left[\,\sum_{i=1}^3 \lambda_i\cdot(\ln(\lambda_i)-1)\,\right] + 6\,G \;=:\; \WBeckerhat(\lambda_1,\lambda_2,\lambda_3)\,.
\end{align*}
This energy function is identical to the \emph{Valanis-Landel energy}, which was introduced in 1967 by K.C. Valanis and R.F. Landel \cite{valanis1967}. However, the Valanis-Landel energy is used as a model for \emph{incompressible} hyperelastic materials exclusively, while $\WBecker$ is only applicable to Becker's law of elasticity in the (compressible) case $\nu=0$ or, equivalently, for $\Lambda=0$. Since Becker only considers compressible materials, it is not clear how to extend his constitutive law to the incompressible case. One possible way, involving the limit $\bulk\to\infty$, was discussed in section \ref{section:incompressibleCase}. Another possibility, however, is to directly apply the incompressibility condition $\det F = 1$, $F$ the deformation gradient, to the hyperelastic model induced by the energy $\WBecker$. This approach leads to a different result for uniaxial deformations: using the general formula \cite{ogden2004}
\[
	t \eqq \frac{\rm d}{{\rm d}\lambda} \; \What\left(\lambda,\tfrac{1}{\sqrt{\lambda}},\tfrac{1}{\sqrt{\lambda}}\right)\,,
\]
where $t$ is the (uniaxial) load, $\lambda$ is the stretch and $\What$ is an energy function of an incompressible hyperelastic material expressed in the principal stretches, we find
\begin{align}
	t_{\rm Becker} \;&=\; \frac{\rm d}{{\rm d}\lambda} \; \WBeckerhat\left(\lambda,\tfrac{1}{\sqrt{\lambda}},\tfrac{1}{\sqrt{\lambda}}\right)\label{eq:incompressibleHyperelastic} \\
	&=\; 2\,G\cdot \frac{\rm d}{{\rm d}\lambda} \; \left[ \lambda\cdot(\ln(\lambda)-1) + 2\,\lambda^{-\afrac12}\cdot(\ln(\lambda^{-\afrac12})-1) \right] \eqq G\cdot \ln(\lambda)\cdot(2+\lambda^{-\afrac32})\,. \nonumber
\end{align}
Fig. \ref{figure:incompressibleCurveHyperelasticAlternative} shows the Biot stress response for uniaxial deformations, computed from the two different applications \eqref{eq:imbertFormula} and \eqref{eq:incompressibleHyperelastic} of Becker's law to the incompressible case.
\DeclareRobustCommand\tikzcaptionLimit{\tikz[baseline=-0.6ex]{\draw[ultra thick, color=blue, dashed] (0,0) to (.5,0);}}
\DeclareRobustCommand\tikzcaptionHyper{\tikz[baseline=-0.6ex]{\draw[ultra thick, color=red] (0,0) to (.5,0);}}
\begin{figure}[H]
	\centering
	\begin{tikzpicture}[scale=1]
		\ifshowimages
			\input{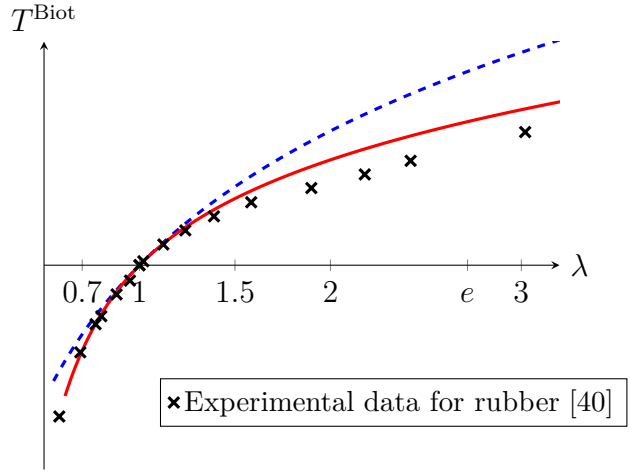}
		\fi
	\end{tikzpicture}
	\caption{Becker's law for the uniaxial deformation of incompressible materials, obtained via the limit case $\bulk\to\infty$ (\tikzcaptionLimit) and by applying the incompressibility restraint $\det F=1$ to the energy function $\WBecker$ (\tikzcaptionHyper).}
	\label{figure:incompressibleCurveHyperelasticAlternative}
\end{figure}

\subsection{Becker's law of elasticity in terms of other stresses and stretches}
In section \ref{section:beckerHenckyComparison}, we have already established the relation between the left Biot stretch tensor $V$ and the Kirchhoff stress tensor $\tau$ for Becker's law of elasticity: combining equations \eqref{eq:tauBeckerInHenckyTerms} and \eqref{eq:tauHencky} we find
\begin{equation}\label{eq:tauBecker}
	\tau(V) \;=\; V \cdot (2\,G\log V + \Lambda\tr(\log V)\cdot\id) \;=\; 2\,G\cdot V \cdot \log V + \Lambda \tr(\log V)\cdot V\,.
\end{equation}
Since $\tau = \det(V)\cdot\sigma$ in general, the Cauchy stress tensor $\sigma$ can be expressed as
\begin{equation}\label{eq:sigmaBecker}
	\sigma(V) \;=\; \frac{2\,G}{\det(V)}\cdot V \cdot \log V + \frac{\Lambda}{\det(V)} \cdot \tr(\log V)\cdot V\,.
\end{equation}
Furthermore, we can obtain a representation of the symmetric second Piola-Kirchhoff stress tensor $\PKtwo$ from the general formula $\Biot = U\cdot\PKtwo$:
\begin{align}\label{eq:pktwoBecker}
	\PKtwo(U) \;=\; U\inv\cdot\Biot(U) \;&=\; 2\,G\cdot U\inv\cdot\log(U) \;+\; \Lambda\cdot\tr[\log U]\cdot U\inv\\
	&=\; (2\,G\cdot\log(U) \;+\; \Lambda\cdot\tr[\log U])\cdot U\inv\,.\nonumber
\end{align}

\subsection{Constitutive inequalities}
\subsubsection{Invertibility of the force-stretch relation}
The \emph{invertibility of the force-stretch relation}, also known as Truesdell's \emph{IFS condition} \cite[p. 156]{truesdell65}, is fulfilled by a stress-stretch relation if and only if the mapping $U\mapsto\Biot(U)$ is invertible. Becker's law of elasticity satisfies this condition, as was already shown in section \ref{section:deduction}.
\subsubsection{The M-condition}\label{section:MCondition}
Since the principal matrix logarithm $\log: \PSymn\to\Symn$ is strictly monotone \cite{NeffMartin2013}, the \emph{Krawietz M-condition} \cite{krawietz1975}
\begin{equation}\label{eq:krawietz}
	\innerproduct{\Biot(U_1)-\Biot(U_2),\: U_1-U_2} > 0 \qquad \forall\: U_1, U_2\in\PSym(3),\;\; U_1 \neq U_2\,,
\end{equation}
where $\innerproduct{X,Y} = \tr(Y^TX)$ denotes the canonical inner product on $\Rnn$, is satisfied by the stress-stretch relation
\[
	\Biot(U) = 2\,G\,\cdot\log(U)\,,
\]
i.e. in the special case $\Lambda=0$. However, it is not satisfied in the general case
\[
	\Biot(U) = 2\,G\,\cdot\log(U) + \Lambda\cdot\tr[\log U]\cdot\id
\]
for sufficiently large $\bulk>0$: choosing
\[
	U_1 = \dmatr{2}{\tel4}{1} \quad \text{ and } \quad U_2 = \id
\]
we find
\begin{align*}
	T(U_1) &= 2\,G\cdot\log(U_1) \;+\; \Lambda\cdot\tr[\log U]\cdot\id\\
	&= 2\,G\cdot \dmatr{\ln 2}{\ln \tel4}{0} \;+\; \Lambda\cdot\ln(\det U_2)\cdot\id\\[2mm]
	&= 2\,G\cdot \dmatr{\ln 2}{-\ln 4}{0} \;+\; \Lambda\cdot\ln\left(\half\right)\cdot\id \;\;=\;\; 2\,G\cdot \dmatr{\ln 2}{-\ln 4}{0} \;-\; \Lambda\cdot\ln(2)\cdot\id
\end{align*}
as well as $T(U_2) = T(\id) = 0$ and thus
\begin{align*}
	\innerproduct{\Biot(U_1)-\Biot(U_2),\: U_1-U_2} &= \innerproduct{2\,G\cdot \dmatr{\ln 2}{-\ln 4}{0} \;-\; \Lambda\cdot\ln(2)\cdot\id\,, \quad \dmatr{1}{-\frac34}{0}}\\
	&= 2\,G\cdot\Big[\ln(2) + (-\ln(4))\cdot\Big(-\frac34\Big)\Big] - \Lambda\cdot\ln(2)\cdot\Big[1 - \frac34\Big]\\
	&= 2\,G\cdot\Big[\ln(2) + \frac{3\cdot\ln(2^2)}{4}\Big] - \frac{\Lambda\cdot\ln(2)}{4}\\
	&= 2\,G\cdot\Big[\frac{4\cdot\ln(2)}{4} + \frac{6\cdot\ln(2)}{4}\Big] - \frac{\Lambda\cdot\ln(2)}{4}\\
	&= \frac{\ln(2)}{4} \cdot \Big[ 20\,G - \Lambda \Big] \;<\; 0
\end{align*}
for $\Lambda>20\,G$.

\subsubsection{Hill's inequality}
Since the energy function of any hyperelastic law satisfying the M-condition is convex in terms of the right Biot-stretch tensor $U$, it follows from section \ref{section:MCondition} that the mapping
\[
	U \,\mapsto\, \WBecker(U) \;=\; 2\,G\; [\,\iprod{U,\,\log(U) - \id} + 3\,]
\]
is convex on $\PSymn$. However, the mapping $X\mapsto \innerproduct{\exp(X), X-\id}$ is not convex on $\Symn$. Therefore $\WBecker$ does not satisfy \emph{Hill's inequality} \cite{hill1970}, which holds for an energy function $W:\PSymn\to\R,\; U\mapsto W(U)$ if and only if the mapping
\[
	\Wtilde:\Symn\to\R\,, \quad \Wtilde(X) = W(\exp(X))\,, 
\]
is convex. This condition is often restated as the convexity of the mapping $\log U \mapsto \Wtilde(\log U)$ for $U\in\PSymn$. This inequality is independent of the rank-one convexity of the energy: for example, while it is easy to see that the quadratic Hencky strain energy fulfils Hill's inequality, it is not rank-one convex \cite{bruhns2001, Neff_Diss00}.

\subsubsection{The Baker-Ericksen inequality}
A stress-stretch relation fulfils the \emph{Baker-Ericksen inequality} if
\[
	(\sigma_i - \sigma_j) \cdot (\lambda_i - \lambda_j) \,>\, 0 \qquad \text{for all } \lambda_i\neq\lambda_j\,,
\]
where $\lambda_k$ denotes the $k$-th principal stretch and $\sigma_k$ denotes the corresponding principal Cauchy stress, i.e. the corresponding eigenvalue of the Cauchy stress tensor $\sigma$.
\begin{proposition}
The stress-stretch relation
\[
	\sigma(V) \;=\; \frac{2\,G}{\det(V)}\cdot V \cdot \log V + \frac{\Lambda}{\det(V)} \cdot \tr(\log V)\cdot V
\]
does not satisfy the Baker-Ericksen inequality for any $G>0$, $3\,\Lambda+2\,G>0$.
\end{proposition}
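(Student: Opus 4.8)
The plan is to reduce to diagonal stretches by isotropy and then, for every admissible pair $(G,\Lambda)$, produce a single deformation and a single pair of principal indices for which the product $(\sigma_i-\sigma_j)(\lambda_i-\lambda_j)$ fails to be positive. Concretely, since $\sigma$ is an isotropic function of $V$, I would first restrict to $V=\diag(\lambda_1,\lambda_2,\lambda_3)$ with $\lambda_k\in\R^+$ and read off from \eqref{eq:sigmaBecker} that $\sigma(V)$ is diagonal with
\[
	\sigma_i \;=\; \frac{\lambda_i}{\det(V)}\bigl(2\,G\,\ln\lambda_i + \Lambda\,\ln(\det V)\bigr)\,,\qquad \det(V) = \lambda_1\lambda_2\lambda_3 > 0\,.
\]

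Next, fixing $i\neq j$ and assuming $\lambda_i>\lambda_j$ without loss of generality, I would note that, after clearing the positive factor $1/\det(V)$ and dividing by $(\lambda_i-\lambda_j)^2>0$, the Baker--Ericksen inequality for this pair is equivalent to positivity of
\[
	\Psi \;:=\; 2\,G\cdot\frac{\lambda_i\ln\lambda_i - \lambda_j\ln\lambda_j}{\lambda_i-\lambda_j} \;+\; \Lambda\,\ln(\det V)\,,
\]
and the point is that the third stretch $\lambda_k$ (with $k\notin\{i,j\}$) enters $\Psi$ only through $\ln(\det V)=\ln\lambda_i+\ln\lambda_j+\ln\lambda_k$, hence can be used to drive $\Lambda\,\ln(\det V)$ to $\pm\infty$ whenever $\Lambda\neq0$.

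I would then split into two cases. If $\Lambda\neq0$, I would take $\lambda_i=2$, $\lambda_j=1$, so that the difference quotient equals the fixed positive number $2\ln 2$, and let $\lambda_k=t$ vary; since then $\Psi = 4\,G\ln 2 + \Lambda\,\ln(2t)$, choosing $t\to0^+$ when $\Lambda>0$ and $t\to\infty$ when $\Lambda<0$ forces $\Psi<0$ for a suitable $t$. If $\Lambda=0$, I would instead keep $\lambda_k=1$ and pick $0<\lambda_j<\lambda_i<e^{-1}$; since the map $h(t)=t\ln t$ has derivative $h'(t)=\ln t+1<0$ on $(0,e^{-1})$ and is therefore strictly decreasing there, one gets $\lambda_i\ln\lambda_i<\lambda_j\ln\lambda_j$, so $\Psi$ equals $2\,G$ times a negative number and hence $\Psi<0$ because $G>0$. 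In either case the Baker--Ericksen inequality is violated, which establishes the proposition.

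The main thing to get right is the algebraic reduction to $\Psi$ --- in particular, recognizing that the conformal prefactor $1/\det(V)$ multiplying $V$ in \eqref{eq:sigmaBecker} is strictly positive and cancels, so that it cannot rescue the inequality. Once that is seen, no genuine obstacle remains: the failure is governed entirely by the non-monotonicity of $t\mapsto t\ln t$ near $t=e^{-1}$ (responsible for the deviatoric case $\Lambda=0$) and by the unboundedness of $\Lambda\,\ln(\det V)$ as a function of the free stretch (responsible for the case $\Lambda\neq0$), and both effects are freely available through the choice of principal stretches.
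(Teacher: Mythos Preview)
Your argument is correct. The reduction to $\Psi$ is accurate (the factor $1/\det(V)>0$ indeed drops out), and both cases are handled cleanly: for $\Lambda\neq 0$ you exploit the free stretch to push $\Lambda\,\ln(\det V)$ to $-\infty$, and for $\Lambda=0$ you use the strict decrease of $t\mapsto t\ln t$ on $(0,e^{-1})$.

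The paper's proof reaches the same conclusion by a slightly different, more unified route. Rather than splitting on the sign of $\Lambda$, it chooses a single \emph{isochoric} counterexample $\lambda_1=e^{-1}$, $\lambda_2=e^{-2}$, $\lambda_3=e^{3}$, so that $\det V=1$ and the $\Lambda$-term in $\sigma_k$ vanishes identically for every admissible $\Lambda$. What remains is precisely your $\Lambda=0$ mechanism: with both $\lambda_1,\lambda_2\in(0,e^{-1})$ and $\lambda_1>\lambda_2$, the non-monotonicity of $t\ln t$ forces $\sigma_1<\sigma_2$. Your approach has the virtue of making transparent \emph{why} the $\Lambda$-term cannot help (it can be driven arbitrarily by the third stretch); the paper's approach buys a single explicit deformation that defeats the inequality uniformly in $\Lambda$, avoiding any case distinction.
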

\begin{proof}
We assume without loss of generality that $V$ is in the diagonal form $V=\diag(\lambda_1,\lambda_2,\lambda_3)$ with $\lambda_1,\lambda_2,\lambda_3>0$. Then
\[
	\log(V) \;=\; \log \dmatr{\lambda_1}{\lambda_2}{\lambda_3} \;=\; \dmatr{\ln \lambda_1}{\ln \lambda_2}{\ln \lambda_3}
\]
and
\begin{align*}
	\sigma(V) \;&=\; \frac{2\,G}{\lambda_1\lambda_2\lambda_3}\cdot V \cdot \log V + \frac{\Lambda}{\lambda_1\lambda_2\lambda_3} \cdot \ln(\lambda_1\lambda_2\lambda_3)\cdot V\nonumber\\[2mm]
	&=\; \frac{2\,G}{\lambda_1\lambda_2\lambda_3}\cdot \dmatr{\lambda_1\cdot\ln\lambda_1}{\lambda_2\cdot\ln\lambda_2}{\lambda_3\cdot\ln\lambda_3} + \frac{\Lambda\cdot \ln(\lambda_1\lambda_2\lambda_3)}{\lambda_1\lambda_2\lambda_3} \cdot \dmatr{\lambda_1}{\lambda_2}{\lambda_3}\,.
\end{align*}
The principal stresses $\sigma_k$ are the diagonal entries of $\sigma$, thus
\begin{equation}\label{eq:principalStresses}
	\sigma_k \;=\; \frac{\lambda_k}{\lambda_1\lambda_2\lambda_3}\cdot(2\,G\,\ln\lambda_k \,+\, \Lambda\,\ln(\lambda_1\lambda_2\lambda_3))\,.
\end{equation}
We let $\lambda_1 = \frac1e$, $\lambda_2 = \frac{1}{e^2}$ and $\lambda_3 = e^3$ to find
\[
	\sigma_1 \;=\; \frac{\frac1e}{\frac1e\cdot\frac{1}{e^2}\cdot e^3} \cdot (2\,G\,\ln\left(\frac1e\right) \,+\, \Lambda\,\ln(\tfrac1e\cdot\tfrac{1}{e^2}\cdot e^3)) \;=\; \frac{2\,G}{e}\cdot \ln \left(\frac1e\right) \;=\; -\frac{2\,G}{e}
\]
as well as
\[
	\sigma_2 \;=\; \frac{\frac{1}{e^2}}{\frac1e\cdot\frac{1}{e^2}\cdot e^3} \cdot (2\,G\,\ln\left(\frac{1}{e^2}\right) \,+\, \Lambda\,\ln(\tfrac1e\cdot\tfrac{1}{e^2}\cdot e^3)) \;=\; \frac{2\,G}{e^2}\cdot \ln \left(\frac{1}{e^2}\right) \;=\; -\frac{4\,G}{e^2}\,.%
\]
Since $\lambda_1 > \lambda_2$ we find
\[
	(\sigma_1 - \sigma_2) \cdot (\lambda_1 - \lambda_2) \;>\; 0
	\quad\Longleftrightarrow\quad \sigma_1 \;>\; \sigma_2
	\quad\Longleftrightarrow\quad -\frac{2\,G}{e} \;>\; -\frac{4\,G}{e^2}
	\quad\Longleftrightarrow\quad 1 \;<\; \frac{2}{e}\,,
\]
showing that the Baker-Ericksen inequality does not hold in this case.
\end{proof}
Therefore Becker's law does not satisfy the \emph{rank-one convexity} condition either, since a rank-one convex stress-stretch relation always fulfils the Baker-Ericksen inequality. In contrast, Hencky's elastic law (c.f. \eqref{eq:HenckyHyperelasticLaw}) does fulfil the Baker-Ericksen inequality \cite{NeffGhibaLankeit}.

\subsubsection{The ordered force inequalities}
An isotropic stress-stretch relation satisfies the \emph{ordered force inequalities} (or \emph{OF inequalities}) if
\begin{equation}\label{eq:orderedForce}
	(T_i - T_j)\cdot(\lambda_i-\lambda_j) \geq 0 \quad \text{for all $i,j\in\{1,2,3\}\,, \; i\neq j$,}
\end{equation}
where $\lambda_i,\lambda_j$ are the principal stretches of a deformation and $T_i,T_j$ are the corresponding principal forces, i.e. the eigenvalues of the Biot stretch tensor $\Biot$. To show that Becker's law fulfils the OF inequalities for all $G>0$, $3\,\Lambda+2\,G>0$, we assume w.l.o.g. that a given stretch tensor $U$ is in the diagonal form $U=\diag(\lambda_1,\lambda_2,\lambda_3)$ and compute
\begin{align*}
	\Biot \;&=\; 2\,G\cdot\log(U) + \Lambda\cdot\tr[\log(U)]\cdot\id\\
	&=\; 2\,G\cdot \dmatr{\ln(\lambda_1)}{\ln(\lambda_2)}{\ln(\lambda_3)} + \Lambda\cdot\tr\dmatr{\ln(\lambda_1)}{\ln(\lambda_2)}{\ln(\lambda_3)} \cdot\id\\[2mm]
	&=\; \dmatr{2\,G\,\ln(\lambda_1)}{2\,G\,\ln(\lambda_2)}{2\,G\,\ln(\lambda_3)} \,+\, \Lambda\,\ln(\lambda_1\lambda_2\lambda_3)\cdot\id\,.
\end{align*}
The eigenvalues of $T_i$ of $\Biot$ corresponding to the principal stretches $\lambda_i$ are therefore
\[
	\lambda_i \;=\; 2\,G\,\ln(\lambda_i) + \Lambda\,\ln(\lambda_1\lambda_2\lambda_3)\,,
\]
thus \eqref{eq:orderedForce} can be written as
\begin{align}
	\Big(&2\,G\,\ln(\lambda_i) + \Lambda\,\ln(\lambda_1\lambda_2\lambda_3) - 2\,G\,\ln(\lambda_j) + \Lambda\,\ln(\lambda_1\lambda_2\lambda_3)\Big) \cdot (\lambda_i-\lambda_j) \;\geq\; 0\nnl
	\Longleftrightarrow \quad &2\,G\cdot(\ln(\lambda_i)-\ln(\lambda_j))\cdot(\lambda_i-\lambda_j) \;\geq\; 0 \label{eq:orderedForceBecker}
\end{align}
Due to the monotonicity of the natural logarithm, \eqref{eq:orderedForceBecker} holds for all $\lambda_1,\lambda_2,\lambda_3 \in \R^+$ and all $G>0$.

\subsection{Existence results}
The following proposition represents a basic existence result by Ciarlet \cite[Theorem 6.7-1]{ciarlet1988} for solutions to the so-called \emph{pure displacement problem} in nonlinear elasticity:
\begin{proposition}\label{prop:existenceCiarlet}
Let $\Omega\subset\R^3$ be a domain with a boundary $\Gamma$ of class $C^2$, and let
\[
	E \;=\; \frac12 (C-\id) \;=\; \frac12 \Big((\id+\grad u)^T(\id+\grad u)-\id\Big)
\]
denote the Green-Lagrange strain tensor of a deformation $\varphi(x) = x+u(x)$. Moreover, assume that the constitutive law is of the form
\[
	\PKtwo(E) \;=\; \Lambda \cdot \tr(E) \cdot \id \,+\, 2\,G\,E \,+\, \mathcal{O}(\norm{E}^2)
\]
with $\Lambda, G>0$, where $\PKtwo$ denotes the second Piola-Kirchhoff stress tensor. Then for each number $p>3$ there exists a neighbourhood $Z^p$ of the origin in the space $L^p(\Omega)$ and a neighbourhood $U^p$ of the origin in the subspace
\[
	V^p(\Omega) = \{v\in W^{2,p}(\Omega) \,|\, v = 0 \text{ on } \Gamma\}
\]
of the Sobolev space $W^{2,p}(\Omega)$ such that for each $f\in Z^p$, the boundary value problem
\[
	\left.\begin{aligned}
		-\div S_1(F) \;&=\; f &&\text{in $\Omega$}\\
		u \;&=\; 0 &&\text{on $\Gamma$}\\
		S_1(F) \;=\; (\id+\grad u) \cdot \PKtwo(&E(u))
	\end{aligned}\;\right\}
\]
has exactly one solution $u$ in $U^p$.
\end{proposition}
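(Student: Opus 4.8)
\subsection*{Proof proposal}

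The plan is to reproduce the classical argument via the inverse function theorem in Banach spaces, following Ciarlet. First I would recast the boundary value problem as a single operator equation: define the nonlinear map
\[
	\mathcal{A}\colon V^p(\Omega) \to L^p(\Omega)\,, \qquad \mathcal{A}(u) \;=\; -\div\big( (\id+\grad u)\cdot \PKtwo(E(u)) \big)\,,
\]
so that, for a given load $f$, a solution of the pure displacement problem is exactly a $u\in U^p$ with $\mathcal{A}(u) = f$. The first task is to verify that $\mathcal{A}$ is well defined and of class $C^1$ from a neighbourhood of $0$ in $V^p(\Omega)\subset W^{2,p}(\Omega)$ into $L^p(\Omega)$. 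This rests on the Sobolev embedding $W^{2,p}(\Omega)\hookrightarrow C^1(\overline\Omega)$, which holds because $p>3=\dim\Omega$; it makes $W^{1,p}$ a Banach algebra and lets the superposition (Nemytskii) operators built from the smooth response function $\PKtwo$ and from the polynomial map $u\mapsto E(u)$ inherit continuous differentiability. Note also that $\mathcal{A}(0)=0$, which is precisely the statement that the reference configuration is stress free (the $\mathcal{O}(\norm{E}^2)$ remainder vanishes at $E=0$).

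Next I would compute the Fr\'echet derivative of $\mathcal{A}$ at the reference state. Since $E(u) = \sym\grad u + \mathcal{O}(\norm{\grad u}^2)$ and $\PKtwo(E) = \Lambda\,\tr(E)\cdot\id + 2\,G\,E + \mathcal{O}(\norm{E}^2)$, differentiating under the divergence gives
\[
	D\mathcal{A}(0)\,v \;=\; -\div\big( 2\,G\,\eps(v) + \Lambda\,\tr(\eps(v))\cdot\id \big)\,, \qquad \eps(v) = \sym\grad v\,,
\]
that is, the linear elasticity (Lam\'e) operator with homogeneous Dirichlet data. The crucial step is then to show that $D\mathcal{A}(0)\colon V^p(\Omega)\to L^p(\Omega)$ is a linear isomorphism. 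This is the $L^p$-regularity theory of Agmon--Douglis--Nirenberg type for the Lam\'e system: the operator is strongly elliptic because $G>0$ and $\Lambda+2\,G>0$ (both follow from $\Lambda,G>0$), the Dirichlet conditions satisfy the complementing (Lopatinski--Shapiro) condition, and $\Gamma$ is of class $C^2$, so one obtains the a priori estimate $\norm{v}_{W^{2,p}}\le c\,\norm{\div(2\,G\,\eps(v)+\Lambda\,\tr(\eps(v))\cdot\id)}_{L^p}$; combined with uniqueness of the linearized problem -- which follows from coercivity of the associated bilinear form via Korn's first inequality on $V^p$ -- this upgrades to bijectivity.

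With $\mathcal{A}$ of class $C^1$ near $0$, $\mathcal{A}(0)=0$, and $D\mathcal{A}(0)$ an isomorphism, the inverse function theorem in Banach spaces furnishes a neighbourhood $U^p$ of $0$ in $V^p(\Omega)$ on which $\mathcal{A}$ restricts to a $C^1$-diffeomorphism onto a neighbourhood $Z^p := \mathcal{A}(U^p)$ of $0$ in $L^p(\Omega)$; in particular, for every $f\in Z^p$ there is exactly one $u\in U^p$ solving the boundary value problem. The main obstacle in this programme is the isomorphism property of $D\mathcal{A}(0)$, i.e. the full $L^p$ elliptic regularity package for the linearized Dirichlet elasticity problem on a $C^2$ domain; the remaining ingredients -- smoothness of the superposition operators and the invocation of the inverse function theorem -- are routine. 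For the complete details we refer to Ciarlet \cite[Theorem 6.7-1]{ciarlet1988}.
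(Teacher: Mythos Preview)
Your sketch is correct and is precisely Ciarlet's argument: recast the problem as $\mathcal{A}(u)=f$, use $p>3$ and Sobolev embedding to get $\mathcal{A}\in C^1$, identify $D\mathcal{A}(0)$ with the Lam\'e operator, invoke Agmon--Douglis--Nirenberg $L^p$-regularity plus Korn to make it an isomorphism, and apply the inverse function theorem. Note, however, that the paper does not actually prove this proposition at all --- it merely \emph{states} it as a citation of \cite[Theorem 6.7-1]{ciarlet1988} and then verifies (in the paragraphs following the proposition) that Becker's particular constitutive law satisfies the hypothesis $\PKtwo(E)=\Lambda\,\tr(E)\cdot\id+2\,G\,E+\mathcal{O}(\norm{E}^2)$. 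So you have supplied more than the paper does: a genuine outline of Ciarlet's proof rather than a bare reference.
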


\noindent To show that Becker's stress-stretch relation fulfils the conditions of Theorem \ref{prop:existenceCiarlet} we compute
\begin{align*}
	\PKtwo \;=\; U\inv\cdot\Biot \;&=\; [2\,G\cdot \log(U) \;+\; \Lambda\cdot\tr[\log U]\cdot\id]\cdot U\inv\\
	&=\; [2\,G\cdot \log(\sqrt{C}) \;+\; \Lambda\cdot\tr[\log \sqrt{C}]\cdot\id]\cdot \sqrt{C}\,\inv\\
	&=\; [G\cdot \log(C) \;+\; \frac{\Lambda}{2}\cdot\tr[\log C]\cdot\id]\cdot \sqrt{C}\,\inv\,.
\end{align*}
For small enough $\norm{C-\id}$, we can employ the series expansion
\[
	\log(C) \;=\; (C-\id) - \frac12\cdot(C-\id)^2 + \; \dots
\]
of the matrix logarithm to find
\begin{align}
	\PKtwo \;&=\; [G\cdot \log(C) \;+\; \frac{\Lambda}{2}\cdot\tr[\log C]\cdot\id]\cdot \sqrt{C}\,\inv\nnl
	&=\; \left[ G\cdot (C-\id + \mathcal{O}(\norm{C-\id}^2)) \;+\; \frac{\Lambda}{2}\cdot\tr[(C-\id) + \mathcal{O}(\norm{C-\id}^2)]\cdot\id \right] \cdot \sqrt{C}\,\inv\nnl
	&=\; \left[ G\cdot (C-\id) \;+\; \frac{\Lambda}{2}\cdot\tr[C-\id]\cdot\id  \;+\; \mathcal{O}(\norm{C-\id}^2) \right] \cdot \sqrt{C}\,\inv\,. \label{eq:existenceComputationOne}
\end{align}
Since
\[
	\sqrt{C}\,\inv \;=\; \id - \frac12 \cdot (C-\id) +  \mathcal{O}(\norm{C-\id}^2)
\]
for small $\norm{C-\id}$, \eqref{eq:existenceComputationOne} can be expressed as
\begin{align*}
	\PKtwo \;&=\; \left[ G\cdot (C-\id) \;+\; \frac{\Lambda}{2}\cdot\tr[C-\id]\cdot\id  \;+\; \mathcal{O}(\norm{C-\id}^2) \right] \cdot \left[ \id - \frac12 \cdot (C-\id) + \mathcal{O}(\norm{C-\id}^2) \right]\\
	&=\; G\cdot (C-\id) \;+\; \frac{\Lambda}{2}\cdot\tr[C-\id]\cdot\id + \mathcal{O}(\norm{C-\id}^2) \;=\; \Lambda \cdot \tr(E) \cdot \id \,+\, 2\,G\,E \,+\, \mathcal{O}(\norm{E}^2)\,.
\end{align*}
Proposition \ref{prop:existenceCiarlet} can therefore be directly applied to Becker's law of elasticity.

\section*{Acknowledgements}
We discovered Becker's original paper in late August 2013 and carefully transcribed it with the help of Mrs. B. Sacha until the end of September 2013. In January 2014 we understood the meaning of Becker's tables on page \pageref{becker:table2} and finished a preliminary version in March 2014. We are grateful to Prof. Kolumban Hutter from the ETH Z\"urich for his helpful remarks on the transcription.
\newpage
\begin{appendix}
\section{Appendix}

\subsection{A brief history of logarithmic strain measures}\label{section:historicalContext}
Becker was not the first to consider a law of elasticity based on the logarithm of the principal stretches. In 1880, A. Imbert proposed a logarithmic stress response function as a model for the uniaxial tension of vulcanized rubber \cite{imbert1880}, while E. Hartig applied a similar law to the uniaxial deformation of rubber \cite{hartig1893} in 1893. However, both of these approaches are purely phenomenological: neither Imbert nor Hartig considers a theoretical framework or states underlying reasons for the use of a logarithmic strain measure, they merely employ the logarithm to give an approximation of data obtained through (uniaxial) experiments.

Although the present article by Becker was summarized in a review article in \emph{Beibl\"atter zu Wiedemanns Annalen der Physik} \cite{annalen1894beib} (by a reviewer only identified as \enquote{G. L\"ubeck, Berlin}) and cited in Lueger's \emph{Lexikon der gesamten Technik} \cite{lueger1894}, Becker's work seems to have gathered little attention outside  the field of geology.
The introduction of the logarithmic strain measure to the theory of elasticity is therefore often attributed to P. Ludwik, for example by Hencky \cite[p. 175]{hencky1931} or Truesdell\footnote{While Imbert's contributions are also mentioned by Truesdell, he only cites a summary by Mehmke \cite{mehmke1897}, who in turn refers to Hartig \cite{hartig1893} instead of Imbert's original paper.} \cite[p. 254]{truesdell60}. However, the earliest mention of the logarithmic strain by Ludwik appears in his 1909 monograph \emph{Elemente der technologischen Mechanik} \cite{ludwik1909} on plastic deformations\footnote{Ludwik arrived at the logarithmic strain measure through the integral $\int_{l_0}^l \frac{d\,l}{l} = \log \frac{l}{l_0}$ over the \emph{instantaneous strain} $\frac{d\,l}{l}$ for uniaxial elongations.}, while Becker derives a detailed connection between stresses and the logarithm\footnote{Although the matrix logarithm had already been investigated in 1892 by W.H. Metzler \cite{metzler1892}, Becker, like Imbert and Ludwik, only considers the (scalar) logarithm of individual stretches instead of the logarithm function in a tensorial setting. The efficient computation of the matrix logarithm is still an open field of research \cite{al2012,al2013}.} of the principal stretches in 1893. This error of attribution seems to originate from H. Hencky who, in a 1931 article \cite{hencky1931}, referred to a brief section on plastic deformations in \emph{H\"utte: Des Ingenieurs Taschenbuch} \cite{hutte1925} where Ludwik is cited. The same misattribution to Ludwik is given by Truesdell \cite{truesdell60}, who does not mention Becker at all.\footnote{Truesdell \cite[p. 270]{truesdell60} also attributes the deduction of the logarithmic measure of strain from a law of superposition to Richter \cite{richter1949log}, although Becker and Hencky used the same approach much earlier. Furthermore, Truesdell \cite[p. 144]{truesdell1952} claims that \emph{\enquote{Hencky himself did not give a systematic treatment}} when introducing the logarithmic strain measure.}

Becker's work can be seen as an early attempt to find an idealized law of nonlinear elasticity for finite deformations through deduction from a number of simple assumptions for the behaviour of an ideally elastic material, predating a remarkably similar approach by Hencky\footnote{Hencky's fundamental view of the natural sciences and their relation to mathematics are laid out in his philosophical article \emph{\"Uber die Beziehungen der Philosophie des ,,Als Ob'' zur mathematischen Naturbeschreibung} \cite{hencky1923}.}, who deduced a logarithmic law of elasticity from the assumption of a law of superposition in his 1928 article \emph{\"Uber die Form des Elastizit\"atsgesetzes bei ideal elastischen Stoffen} \cite{hencky1928, henckyTranslation}. Unlike Becker, however, Hencky gave an explicit motivation for his assumed law of superposition, which he later expanded upon in his 1929 article \emph{Das Superpositionsgesetz eines endlich deformierten relaxationsf\"ahigen elastischen Kontinuums und seine Bedeutung f\"ur eine exakte Ableitung der Gleichungen f\"ur die z\"ahe Fl\"ussigkeit in der Eulerschen Form} \cite{hencky1929super, henckyTranslation}: referring to Prandtl's distinction between \enquote{elastically determinate} and \enquote{elastically indeterminate constructs}\footnote{Prandtl \cite{prandtl1924} calls a system in which \emph{\enquote{already occurring prestresses have no significant influence on the stresses induced by additional loads, i.e. in which the stresses simply superimpose}} elastically determinate.} \cite{prandtl1924}, Hencky assumes that a law of elasticity for an \emph{ideally} elastic body should provide \emph{\enquote{elastic determinacy to the greatest extent for epistemological reasons}} \cite[p. 19]{henckyTranslation}, a requirement motivated by Dingler \cite{dingler1928experiment}.
From this he concludes that the multiplicative composition of coaxial stretches must effect the additive composition of the respective Cauchy stresses $\sigma$, leading to the stress response function
\begin{equation}\label{eq:historicHenckySigma}
	\sigma(V) = 2\,\shear\cdot\dev_3\log(V) + \bulk\cdot\tr[\log V]\cdot\id\,,
\end{equation}
as described in Corollary \ref{cor:hencky}. In a later 1929 article \cite{hencky1929, henckyTranslation}, however, Hencky corrected his statements, proposing then that the law of superposition must hold for the Kirchhoff stress tensor $\tau$ instead of the Cauchy stress. Although his reasoning for this correction is based on L. Brillouin's suggestion \cite{brillouin1925} that the Cauchy stress \emph{\enquote{is not a true tensor of weight 0 but a tensor density}} as well as a \emph{\enquote{lack of group properties for pure deformations in the general case}} \cite[p. 20]{henckyTranslation}, the fact that the stress-stretch relation
\begin{equation}\label{eq:historicHenckyTau}
	\tau(V) = 2\,\shear\cdot\dev_3\log(V) + \bulk\cdot\tr[\log V]\cdot\id
\end{equation}
resulting from this new approach with respect to the Kirchoff stress $\tau$ is \emph{hyperelastic} with the corresponding strain energy
\begin{equation}\label{eq:historicHenckyEnergy}
	W(V) = \shear\,\norm{\dev_3 \log V}^2 + \frac{\bulk}{2}\,[\tr(\log V)]^2
\end{equation}
can be seen as a motivating factor as well, especially since Hencky in his 1928 article explicitly computed that the stress response \eqref{eq:historicHenckySigma} does not lead to a path-independent energy potential and is therefore \emph{not} hyperelastic.

Although his deductions of the stress-stretch relations \eqref{eq:historicHenckySigma} and \eqref{eq:historicHenckyTau} from the respective laws of superposition are correct (c.f. Corollary \ref{cor:hencky}), Hencky does not provide explicit computations for either one. A proof for a generalized version of this deduction from the law of superposition was later given by H. Richter \cite{richter1949log}, who did extensive work on the matrix logarithm in finite elasticity \cite{richter1948, richter1949verzerrung, richter1950, richter1952elastizitatstheorie}.

More information on the historical development of nonlinear elasticity theory and logarithmic strain measures in particular as well as related articles by Becker, Hencky, Richter and other authors can be found under \url{http://www.uni-due.de/mathematik/ag_neff/neff_hencky}\,.

\subsection{The stress tensors}
\label{appendix:stressTensors}
Throughout his work Becker refers to \enquote{initial stresses} as well as \enquote{final stresses}. Since Becker only considers homogeneous deformations along fixed axes, there is some ambiguity as to which stress tensors are represented by these terms. However, Becker's remark that \Bquote{[in] a shear of ratio $\alpha$ with a tensile axis in the direction of $oy$, minus ${\Bnormal}_{x}\alpha$ is the negative stress acting in the direction of the $x$ axis into the area $\alpha$ on which it acts} \Bref{becker:finalStressDefinition} allows us to infer that \enquote{final stress} refers to the force per area in the deformed configuration. Furthermore the terms \enquote{load} and \enquote{initial stress} are often used interchangeably \Bref[e.g. ]{becker:initialStressDefinition}. Since Becker considers the deformation of a unit cube \Bref{becker:unitCube}, the load is the force acting on an area of size 1 in the undeformed configuration, hence his equating load and \enquote{initial stress} strongly suggests that the latter should be interpreted as \enquote{force per unit area of the undeformed configuration}.\\
Note that this information is not sufficient to completely characterize the two stress tensors: since Becker only considers the case of fixed principal axes, the principal directions of the stress tensors are undetermined. However, the assumption of isotropy ensures that the resulting law of elasticity only depends on the principal stress response to deformations along fixed axes. Thus the choice of tensorial directions is irrelevant to the resulting stress-stretch relation.\\
To simplify the resulting expressions we will therefore interpret the term \enquote{final stress} as the \emph{Cauchy stress tensor} $\sigma$ and the \enquote{initial stress} as the \emph{Biot stress tensor} $\Biot$.

\subsection{The basic decomposition of traction by Cauchy stress quadrics}\label{section:basicDecomposition}
Let $\Cauchy$ denote the symmetric Cauchy stress tensor here and throughout. With respect to its principal axes, $\sigma$ has the diagonal representation
\begin{equation}
\label{eq:diagonalStress}
	\Cauchy=\matr{\Cauchy_1&0&0\\0&\Cauchy_2&0\\0&0&\Cauchy_3}\,,
\end{equation}
where $\Cauchy_i$ denotes the $i$-th principal stress. Then for a given plane in the deformed configuration, the traction $t$ in direction $n$ is given by
\[
	t = \Cauchy \,n\,,
\]
where $n$ is the unit normal vector of the plane. If $n=(n_1,n_2,n_3)^T$ is the representation of $n$ with respect to the principal axes of $\Cauchy$, the traction $t$ computes to
\[
	t = \Cauchy \, n = \matr{\Cauchy_1&0&0\\0&\Cauchy_2&0\\0&0&\Cauchy_3} \cdot \matr{n_1\\n_2\\n_3} = \matr{\Cauchy_1\,n_1\\\Cauchy_2\,n_2\\\Cauchy_3\,n_3}\,.
\]
Therefore the magnitude $\Bresultant$ of the traction, which is also called the \emph{resultant stress on the plane} by Becker \Bref{becker:resultantStress}, is given by
\[
	\Bresultant^2 = \norm{t}^2 = \norm{\Cauchy \, n}^2 = \Cauchy_1^2\,n_1^2 + \Cauchy_2^2\,n_2^2 + \Cauchy_3^2\,n_3^2\,.
\]
By decomposing the traction $t = t_N + t_T$ into a tangential part $t_T$ parallel to the plane and a normal part $t_N$ we obtain the magnitude of \emph{normal stress} $\Bnormal$ via
\[
	\Bnormal = \innerproduct{t,n} = \innerproduct{\Cauchy\, n, n} = \innerproduct{\matr{\Cauchy_1\,n_1\\\Cauchy_2\,n_2\\\Cauchy_3\,n_3},\: \matr{n_1\\n_2\\n_3}} = \Cauchy_1\,n_1^2 + \Cauchy_2\,n_2^2 + \Cauchy_3\,n_3^2\,,
\]
as well as the magnitude of \emph{tangential stress} $\Btangent$: since $\Bresultant^2 = \Btangent^2 + \Bnormal^2$ by Pythagoras' theorem we obtain
\begin{align}
	\Btangent^2 \;&=\; \Bresultant^2 - \Bnormal^2\nnl
	&=\; \Cauchy_1^2\,n_1^2 \,+\, \Cauchy_2^2\,n_2^2 \,+\, \Cauchy_3^2\,n_3^2 \nnl
	&\qquad- (\Cauchy_1^2\,n_1^4 \,+\, \Cauchy_2^2\,n_2^4 \,+\, \Cauchy_3^2\,n_3^4 \,+\, 2\,\Cauchy_1\,\Cauchy_2\,n_1^2\,n_2^2 \,+\, 2\,\Cauchy_1\,\Cauchy_3\,n_1^2\,n_3^2 \,+\, 2\,\Cauchy_2\,\Cauchy_3\,n_2^2\,n_3^2)\nnl
	&=\; \Cauchy_1^2\,n_1^2(1-n_1^2) \,+\, \Cauchy_2^2\,n_2^2(1-n_2^2) \,+\, \Cauchy_3^2\,n_3^2(1-n_3^2)\nnl
	&\qquad -2\,(\Cauchy_1\,\Cauchy_2\,n_1^2\,n_2^2 \,+\, \Cauchy_1\,\Cauchy_3\,n_1^2\,n_3^2 \,+\, \Cauchy_2\,\Cauchy_3\,n_2^2\,n_3^2)\nnl
	&=\;  \Cauchy_1^2\,n_1^2(n_2^2+n_3^2) \,+\, \Cauchy_2^2\,n_2^2(n_1^2+n_3^2) \,+\, \Cauchy_3^2\,n_3^2(n_1^2+n_2^2)\nnl
	&\qquad -2\,(\Cauchy_1\,\Cauchy_2\,n_1^2\,n_2^2 \,+\, \Cauchy_1\,\Cauchy_3\,n_1^2\,n_3^2 \,+\, \Cauchy_2\,\Cauchy_3\,n_2^2\,n_3^2)\nnl
	&=\; n_1^2\,n_2^2\,(\Cauchy_1^2 - 2\,\Cauchy_1\,\Cauchy_2 + \Cauchy_2^2) \,+\, n_1^2\,n_3^2\,(\Cauchy_1^2 - 2\,\Cauchy_1\,\Cauchy_3 + \Cauchy_3^2) \,+\, n_2^2\,n_3^2\,(\Cauchy_2^2 - 2\,\Cauchy_2\,\Cauchy_3 + \Cauchy_3^2)\nnl
	&=\; (\Cauchy_1-\Cauchy_2)^2\,n_1^2\,n_2^2 \;+\, (\Cauchy_1-\Cauchy_3)^2\,n_1^2\,n_3^2 \;+\, (\Cauchy_2-\Cauchy_3)^2\,n_2^2\,n_3^2\,. \label{eq:tangentialMagnitude}
\end{align}
Note that the tangential Cauchy stress $\Btangent$ is not the \emph{tangential load} Becker refers to as a failure criterion \Bref{becker:ruptureCriterion}. In the case of a pure shear, the tangential load is maximal if $n$ is normal to the plane of no distortion (c.f. \ref{section:maximumTangentialLoadBecker}), while the tangential stress $\Btangent$ attains its maximum if $n$ is normal to \Bquote{planes making angles of $45^\circ$ with the axes} \Bref{becker:maximumTangentialStress}.
\subsection{Becker's computations of the directions of maximum tangential stress}\label{section:maximumTangentialLoadBecker}
As was discussed in section \ref{section:geometryOfShear}, the plane of no distortion is the plane of maximum tangential load in Becker's model. According to Becker (in the footnote on page \pageref{becker:shearEllipse}), the tangential load acting on a plane with unit normal $n = (n_1,n_2,0)^T$ is $\Btangent r$, where\footnote{Note carefully that we have switched $n_1$ and $n_2$ to fit the orientation of our coordinate system as explained in section \ref{section:pureFiniteShearAssumption}.} $1/r^2=\alpha^2 n_{1}^2+\alpha^{-2}n_{2}^2$. His computation of the plane of maximum tangential load depends on his assumption that a pure shear deformation $F$ corresponds to a pure shear stress tensor $\Biot$ (Axiom 1). 
In this case we can compute the Cauchy stress tensor:
\begin{align*}
	&F = \dmatr{\alpha}{\frac1\alpha}{1}\,,\;\; \alpha>1\,, \qquad \Biot = \dmatr{s}{-s}{0}\,,\;\; s\in\R\\[2mm]
	\Longrightarrow \quad& \sigma \;=\; \frac{1}{\det U}\cdot U\inv\cdot F\cdot \Biot\cdot F^T \;=\; 1\cdot F\inv\cdot F \cdot \Biot \cdot F \;=\; \dmatr{\alpha\,s}{-\frac{s}{\alpha}}{0}\,.
\end{align*}
Then, for a unit vector $n=(n_1,n_2,0)^T$, we find
\[
	\Bresultant^2 \;=\; \norm{\sigma\, n}^2 \;=\; s^2\cdot (\alpha^2\,n_1^2 + \alpha^{-2}n_2^2)
\]
as well as
\[
	\Bnormal^2 \;=\; \innerproduct{\sigma\, n\,, n}^2 \;=\; s^2\cdot (\alpha\,n_1^2 - \alpha^{-1}\,n_2^2)^2\,.
\]
As Becker states \Bref{becker:resultantLoad}, the resultant load
\[
	\Bresultant^2\,r^2 \;=\; \frac{1}{\alpha^2 n_{1}^2+\alpha^{-2}n_{2}^2} \cdot s^2\cdot (\alpha^2\,n_1^2 + \alpha^{-2}n_2^2) \;=\; s^2
\]
is independent of $n$. In order to find the normal $n$ to the plane of maximum tangential load, i.e. $n$ such that $\Btangent^2 r^2 = \Bresultant^2\,r^2 - \Bnormal^2$ is maximal, it is therefore sufficient to minimize
\[
	\Bnormal^2 r^2 \;=\; r^2\cdot s^2\cdot (\alpha\,n_1^2 - \alpha^{-1}\,n_2^2)^2\,.
\]
Since the term is nonnegative, the minimum is attained if $\Bnormal^2 r^2=0$, which is the case if $n_2^2 \;=\; \alpha^2\,n_1^2$. As we have seen in \eqref{eq:normalsToPOND} in section \ref{section:planesOfNoDistortion}, this equation characterizes the normals to the plane of no distortion, showing again that they are indeed the planes of maximum tangential load under Becker's assumptions.
\subsection{Conversion of the moduli}
\label{section:moduli}
Throughout his article, Becker refers to the \emph{modulus of cubical dilation} (or \emph{bulk modulus}) $\bulk$, the \emph{modulus of distortion} (or \emph{shear modulus}) $G$ and \emph{Young's modulus} $E$. His equation \Bref{becker:modulusConversion}
\[
	Q\left(\frac{1}{9\bulk} + \frac{1}{3\,G}\right) \;=\; Q / E
\]
follows directly from the well-known conversion formula $E=\frac{9\bulk\,G}{3\bulk+G}$ for these moduli:
\[
	\frac{1}{9\,\bulk} + \frac{1}{3\,G} \;=\; \frac{G + 3\,\bulk}{9\cdot\bulk\,G} \;=\; \frac1E\,.
\]
Similarly, with $\nu = \frac{3\,\bulk - 2\,G}{2(3\,\bulk+G)}$ denoting Poisson's ratio, we find
\[
	\frac{1}{9\,K} - \frac{1}{6\,G} \;=\; \frac{2\,G-3\,\bulk}{18\,\bulk\,G} \;=\; -\frac{3\,\bulk - 2\,G}{2(3\,\bulk+G)} \cdot \frac{2(3\,\bulk+G)}{18\,\bulk\,G} \;=\; -\nu\cdot \frac{3\,\bulk+G}{9\,\bulk\,G} \;=\; -\frac{\nu}{E}\,.
\]
\newpage
\section{Notation}
The following notation is employed throughout the article:
\begin{itemize}
\setcolumnlength{35mm}
\setsecondcolumnlength{21mm}
	\item[] \twoco{$\id \;=\; \diag(1,1,1) = \dmatrs111$}{}identity matrix
	\item[] \twoco{$\Omega_0\subset\R^3$}{}reference configuration
	\item[] \twoco{$\varphi: \Omega_0\to\R^3$}{}deformation mapping
	\item[] \twoco{$F \;=\; \grad\varphi(x)\in\Rnn$}{}deformation gradient
	\item[] \twoco{$U \;=\; \sqrt{F^T F}$}{}right Biot stretch tensor
	\item[] \twoco{$C \;=\; F^T F = U^2$}{}right Cauchy-Green deformation tensor
	\item[] \twoco{$V \;=\; \sqrt{F F^T}$}{}left Biot stretch tensor
	\item[] \twoco{$B \;=\; F F^T = V^2$}{}left Cauchy-Green deformation tensor
	\item[] \twoco{$R \;=\; FU\inv = V\inv F\in\SOn$}{}orthogonal polar factor of the deformation gradient
	\item[] \twoco{$\sigma$}{}Cauchy stress tensor, \enquote{true stress}
	\item[] \twoco{$\tau \;=\; \det(F)\,\cdot\, \sigma$}{}Kirchhoff stress tensor
	\item[] \twoco{$\PKone \;=\; \det(F) \,\cdot\, \sigma \, F^{-T}$}{}first Piola-Kirchhoff stress tensor, \enquote{nominal stress}
	\item[] \twoco{$\PKtwo \;=\; \det(F) \,\cdot\, F\inv \, \sigma \, F^{-T}$}{}symmetric second Piola-Kirchhoff stress tensor
	\item[] \twoco{$\Biot \;=\; U\PKtwo \,=\, R^T\PKone$}{}Biot stress tensor
	\item[] \twoco{$G$, $\Lambda$}{}Lam\'e constants
	\item[] \twoco{$\bulk$}{}bulk modulus
	\item[] \twoco{$E$}{}Young's modulus
	\item[] \twoco{$\nu$}{}Poisson's ratio
\end{itemize}
\end{appendix}
\newpage
\refstepcounter{section}%
\cftaddtitleline{toc}{section}{\protect\numberline{~}References}{\arabic{page}}
{\footnotesize

}
\newpage
\changetext{25mm}{-35mm}{15mm}{10mm}{0mm}
\renewcommand{\ln}{\operatorname{ln}}

\markright{\ }
\setcounter{equation}{0}

\refstepcounter{section}%
\cftaddtitleline{toc}{section}{\protect\numberline{~}\emph{The Finite Elastic Stress-Strain Function} by G.F. Becker}{\arabic{page}}
\markright{The Finite Elastic Stress-Strain Function by G.F. Becker}

\setcounter{page}{336}
\parskip 0.9ex
\parindent 10pt

\def\today{}
\thispagestyle{empty}
\begin{center}
{\small THE} \vspace*{0.5cm}\\
{\large A M E R I C A N}\vspace*{0.5cm}\\
{\Large\bf JOURNAL OF SCIENCE}.\\

\vspace*{0.5cm}
{\scriptsize EDITORS}\\
{\normalsize JAMES D. and EDWARD S.\,DANA.}\\
\vspace*{0.5cm}
{\scriptsize  ASSOCIATE EDITORS}\\
{\scriptsize  PROFESSORS} {\small JOSIAH P.\,COOKE, GEORGE L.\, GOODALE}\\
{\scriptsize AND} {\small JOHN TROWBRIDGE}, {\scriptsize OF CAMBRIDGE}.\\
\vspace*{0.5cm}
{\scriptsize  PROFESSORS} {\small H.\,A.\,NEWTON}
{\scriptsize AND} {\small A.\,E.\,VERRILL}, {\scriptsize OF NEW HAVEN},\\
\vspace*{0.5cm}
{\scriptsize  PROFESSOR} {\small GEORGE F.\,BARKER}, {\scriptsize OF PHILADELPHIA}.\\
\vspace*{0.5cm}
---------------
\vspace*{0.5cm}\\
{\small\bf THIRD SERIES.}\\ \vspace*{0.3cm}
{VOL. XLVI. -- [WHOLE NUMBER, CXLVI.]}\\ \vspace*{0.3cm}
{\small\bf Nos. 271 - 276.}\\ \vspace*{0.3cm}
{JULY TO DECEMBER, 1893.}\\ \vspace*{0.3cm}
{\tiny WITH XI PLATES.}\\
\vspace*{0.5cm}
---------------
\vspace*{0.5cm}\\
{NEW HAVEN, CONN.: J.\,D. \& E.\,S.\,DANA.\\\vspace*{0.2cm}
1893}.
\end{center}
\vspace*{1cm}
---------------
\newline
{\scriptsize GEORGE FERDINAND BECKER, {\bf The Finite Elastic Stress-Strain Function}, The American Journal of Science, (1893), 337-356 
\vspace*{0.5cm}\newline
\noindent
This new typesetting in $\LaTeX$ by B. Sacha, R. Martin and P. Neff, Lehrstuhl f\"ur Nichtlineare Analysis und Modellierung; Fakult\"at f\"ur Mathematik, Universit\"at Duisburg-Essen, 45117 Essen, Deutschland; email: patrizio.neff@uni-due.de, September 2013} 
\emptythanks

\title{ART. XLVIII. -- \it{The Finite Elastic Stress-Strain Function;}}
\author{by GEO.\,F.\,Becker}

\markright{G. F. Becker -- Finite Elastic Stress-Strain Function.}

\maketitle

{\it Hooke's Law}. -- The law proposed by Hooke to account for the results of experiments on elastic bodies is equivalent to: -- Strain is proportionate to the load, or the stress initially applied to an unstrained mass\Blabel{becker:originalHooke}. The law which passes under Hooke's name is equivalent to: -- Strain is proportional to the final stress required to hold a strained mass in equilibrium.\footnote{Compare Bull. Geol. Soc. Amer. vol. iv, 1893, p. 38.} \Blabel{becker:alternativeHooke} It is now universally acknowledged that either law is applicable  only to strains so small that their squares are negligible\Blabel{becker:hookeRejection}. There are excellent reasons for this limitation. Each law implies that finite external forces may bring about infinite densities or infinite distortions\Blabel{becker:hookeSingularity}, while all known facts point to the conclusion that infinite strains result only from the action of infinite forces. When the scope of the law is confined to minute strains, Hooke's own law and that known as his are easily shown to lead to identical results; and the meaning is then simply that the stress-strain curve is a continuous one cutting the axes of no stress and of no strain at an angle whose tangent is finite. Hooke's law in my opinion rests entirely upon experiment, nor does it seem to me conceivable that any process of pure reason ``should reveal the character of the dependence of the geometrical changes produced in a body on the forces acting upon its elements.''\footnote{Saint-Venant in his edition of Clebsch. p. 39.}

{\it Purpose of this paper.} -- So far as I know no attempt has been made since the middle of the last century to determine the character of the stress-strain curve for the case of finite stress.\footnote{J.\,Riccati, in 1747, a brief account of whose speculation is given in Todhunter's history of elasticity, proposed a substitute for Hooke's law.} I have been unable to find even an analysis of a simple finite traction and it seems that the subject has fallen into neglect, for this analysis is not so devoid of interest as to be deliberately ignored, simple though it is.

\vspace*{1cm}
\pagebreak

In the first part of this paper finite stress and finite strain will be examined from a purely kinematical point of view; then the notion of an ideal isotropic solid\Blabel{becker:isotropy} will be introduced and the attempt will be made to show that there is but one function which will satisfy the kinematical conditions consistently with the definition. This definition will then be compared with the results of experiment and substantially justified.

In the second part of the paper the vibrations of sonorous bodies will be treated as finite and it will be shown that the hypothesis of perfect isochronism, or perfect constancy of pitch, leads to the same law as before, while Hooke's law would involve sensible changes of pitch during the subsidence of the amplitude of vibrations.

{\it Analysis of shearing stress.} -- Let $\Bresultant$, $\Bnormal$ and $\Btangent$ be the resultant, normal and tangential stresses at any point\Blabel{becker:resultantStress}. Then if $\Cauchy_{1}, \Cauchy_{2}$ and
$\Cauchy_{3}$ are the so-called principal stresses and $n_{1}, n_{2}, n_{3}$ the direction cosines of a plane\Blabel{becker:directionCosines}, there are two stress quadrics established by Cauchy which may be written
\[
\begin{array}{lcl}
{\Bresultant}^2 & = & \Cauchy_{1}^2\, n_{1}^2 + \Cauchy_{2}^2\, n_{2}^2 + \Cauchy_{3}^2\, n_{3}^2\,,\\[1ex]
 \Bnormal   & = & \Cauchy_{1}\, n_{1}^2 + \Cauchy_{2}\, n_{2}^2 + \Cauchy_{3}\, n_{3}^2\,.
\end{array}
\]
Since also $\Btangent^2 = \Bresultant^2-\Bnormal^2$,
\[
\Btangent^2 = (\Cauchy_{1}-\Cauchy_{2})^2\, n_{1}^2\, n_{2}^2 + (\Cauchy_{1}-\Cauchy_{3})^2\, n_{1}^2\, n_{3}^2 + (\Cauchy_{2}-\Cauchy_{3})^2\, n_{2}^2\, n_{3}^2\,;
\]
and these formulas include the case of finite stresses as well as of infinitesimal ones.

In the special case of a plane stress in the $xy$ plane, $\Cauchy_{3}=0$ and $n_{3} = 0$, and the formulas become
\[
\begin{array}{lcl}
\Bresultant^2 & = & \Cauchy_{1}^2\, n_{1}^2 + \Cauchy_{2}^2\, n_{2}^2\,,\\[1ex]
\Bnormal   & = & \Cauchy_{1}\, n_{1}^2 + \Cauchy_{2}\, n_{2}^2,\\[1ex]
\Btangent^2 & = & (\Cauchy_{1}-\Cauchy_{2})^2\, n_{1}^2\, n_{2}^2\,.
\end{array}
\]
In the particular case of a shear (or a $\it{pure}$ shear) there are two sets of planes on which the stresses are purely tangential, for otherwise there could be no planes of zero distortion\Blabel{becker:planesOfNoDistortionAndTangentialStrain}. On these planes ${\Bnormal}=0$, and if the corresponding value of $n_{1}/n_{2}$ is $\alpha$,
\[
-\,\Cauchy_{1}\, \alpha = \Cauchy_{2}/\alpha\,.
\]
If this particular quantity\Blabel{becker:cauchyStressShearFormula} is called $Q/3$, one may write the equations of stress in a shear for any plane in the form
\[
\begin{array}{lcl}
\Bresultant^2 & = & \displaystyle \frac{Q^2}{9}\left(n_{2}^2\, \alpha^2 + 
                    \frac{n_{1}^2}{\alpha^2}\right)\,,\\[2ex]
\Bnormal   & = & \displaystyle \frac{Q}{3}  \left(n_{2}^2\, \alpha -
                    \frac{n_{1}^2}{\alpha}\right)\,,\\[2ex]
\Btangent^2 & = & \displaystyle \frac{Q^2}{9}\left(\alpha +
                     \frac{1}{\alpha}\right)^2 n_{1}^2\, n_{2}^2\,.
\end{array}
\]
\newpage
For the axes of the shear the tangential stress must vanish, so that $n_{1}$ or $n_{2}$ must become zero, and therefore the axes of $x$ and $y$ are the shear axes. If ${\Bnormal}_{x}$ and ${\Bnormal}_{y}$ are the normal axial stresses, one then has
\[
-\,{\Bnormal}_{x}\alpha ={\Bnormal}_{y}/\alpha = Q/3\,.
\]

A physical interpretation must now be given to the quantity $\alpha$. In a finite shearing strain of ratio $\alpha$, it is easy to see that the normal to the planes of no distortion makes an angle with the contractile axis of shear the cotangent of which is $\alpha$.\;\Blabel{becker:planesOfNoDistortion} If the tensile axis of the shear is the axis of $y$, and the contractile axis coincides with $x$, this cotangent is $n_{1}/n_{2}$. Hence in the preceding formulas $\alpha$ is simply the ratio of shear.

In a shear of ratio $\alpha$ with a tensile axis in the direction of $oy$, minus ${\Bnormal}_{x}\alpha$ is the negative stress acting in the direction of the $x$ axis into the area $\alpha$ on which it acts\Blabel{becker:finalStressDefinition}. It is therefore the load or initial stress\Blabel{becker:initialStressDefinition} acting as a pressure in this direction. Similarly
${\Bnormal}_{y}/\alpha$ is the total load or initial stress acting as a tension or positively in the direction $oy$. Hence a simple finite shearing strain must result from the action of two equal loads or initial stresses of opposite signs at right angles to one another\Blabel{becker:shearAxiom} the common value of the loads being in the terms employed $Q/3$.\footnote{This proposition I have also deduced directly from the conditions of equilibrium in Bull.\,Geol.\,Soc.\,Amer., vol.\,iv, 1893, page 36. It may not be amiss here to mention one or two properties of the stresses in a shear which are not essential to the demonstration in view. The equation of the shear ellipse\Blabel{becker:shearEllipse} may be written in polar co\"ordinates $1/r^2=\alpha^2 n_{2}^2+\alpha^{-2}n_{1}^2$. Hence the resultant load on any plane whatever\Blabel{becker:resultantLoad} is ${\Bresultant}r=\pm Q/3$. The final tangential stress is well known to be maximum for planes making angles of $45^\circ$ with the axes\Blabel{becker:maximumTangentialStress}; but it is easy to prove that the tangential load, ${\Btangent}r$, is maximum for the planes of no distortion. These are also the planes of maximum tangential strain\Blabel{becker:maximumTangentialStrain}. Rupture by shearing is determined by maximum tangential {\it load}, not stress\Blabel{becker:ruptureCriterion}.}

It is now easy to pass to a simple traction in the direction of $oy$ since the principle of superposition is applicable to this case. Imagine two equal shears in planes at right angles to one another combined by their tensile axes in the direction $oy$, and let the component forces each have the value $Q/3$. To this system add a system of dilational forces acting positively and equally in all directions\Blabel{becker:volumetricStress} with an intensity $Q/3$. Then the sum of the forces acting in the direction of $oy$ is $Q$ and the sum of forces acting at right angles to $oy$ is zero.

Inversely a simple finite load or initial stress of value $Q$ is resoluble into two shears and a dilation, each axial component of each elementary initial stress being exactly one-third of the total load. Thus the partition of force in a finite traction is exactly the same as it is well known to be in an infinitesimal traction, provided that the stress is regarded as initial and not final.\footnote{Thomson and Tait, Nat.\,Phil., section 682.}

\newpage
\enlargethispage{2cm}

{\it Application to system of forces}. -- Without any knowledge of the relations between stress and strain, the foregoing analysis can be applied to developing corresponding systems of stress and strain. Let a unit cube\Blabel{becker:unitCube} of an elastic substance presenting equal resistance in all directions be subjected to axial loads $P, Q, R$. Suppose these forces to produce respectively dilations of ratios $h_{1},h_{2},h_{3}$ and shears of ratios $p,q,r$. Then the following table\Blabel{becker:table1} shows the effects of each axial force on each axial dimension of the cube in any pure strain.

\Boriginal{
\begin{table}[h]
\begin{center}
\begin{tabular}{l|ccc|ccc|ccc}\hline
Active force & \multicolumn{3}{|c}{P} & \multicolumn{3}{|c}{Q} & \multicolumn{3}{|c}{R}\\ \hline
Axis of strain & $x$ & $y$ & $z$ & $x$ & $y$ & $z$ & $x$ & $y$ & $z$\\ \hline
Dilation & $h_{1}$ & $h_{1}$ & $h_{1}$ & $h_{2}$ & $h_{2}$ & $h_{2}$ & $h_{3}$ & $h_{3}$ & $h_{3}$\\[1ex]
Shear & $p$ & $1/p$ & 1 & $1/q$ & $q$ & 1 & $1/r$ & 1 & $r$\\[1ex]
Shear & $p$ & 1 & $1/p$ & 1 & $q$ & $1/q$  & 1 & $1/r$ & $r$\\ \hline
\end{tabular}
\end{center}
\end{table}
}

Grouping the forces and the strains by axes, it is easy to see that the components may be arranged as in the following table, which exhibits the compound strains in comparison with the compound loads which cause them, though without in any way indicating the functional relation between any force and the corresponding strain.

\vspace*{0.5cm}
\begin{table}[h]\begin{center}{Pure Strains.\Blabel{becker:table2}}\\[1ex]
\begin{tabular}{l|c|c|c}\hline
Axes & x & y & z\\ \hline
Dilation$\upperPhant$ & $h_{1} h_{2} h_{3}$ & $h_{1} h_{2} h_{3}$ & $h_{1} h_{2} h_{3}$
\\[1ex]
Shear & $\displaystyle p^2\cdot\frac{1}{q}\cdot\frac{1}{r}$ &$\displaystyle\frac{1}{p^2}\cdot q\cdot r$  & 1 \\[2ex]
Shear & 1 & $\displaystyle\frac{pq}{r^2}$ & $\displaystyle\frac{r^2}{pq}$\\[2ex] \hline
Products & $\displaystyle\frac{\upperPhant h_{1}h_{2}h_{3}p^2}{qr}$ & 
             $\displaystyle\frac{h_{1}h_{2}h_{3}q^2}{pr}$ &
               $\displaystyle\frac{h_{1}h_{2}h_{3}r^2}{pq}$\\[2ex] \hline            
\end{tabular}
\end{center}
\end{table}

\begin{table}[h]\begin{center}{Loads or Initial Stresses.\Blabel{becker:table3}}\\[1ex]
\begin{tabular}{l|c|c|c}\hline
Axes & \multicolumn{1}{|c}{x} & \multicolumn{1}{|c}{y} & \multicolumn{1}{|c}{z}\\ \hline
Dilation  &$\displaystyle\frac{P+Q+R\upperPhant}{3}$  &$\displaystyle\frac{P+Q+R}{3}$ 
        & $\displaystyle\frac{P+Q+R}{3}$\\[2ex]
Shear  &$\displaystyle-\,\frac{Q+R-2P}{3}$  &$\displaystyle\frac{Q+R-2P}{3}$ 
        & 0 \\[2ex] 
Shear & 0 & $\displaystyle\frac{P+Q-2R}{3}$ & $\displaystyle-\, \frac{P+Q-2R}{3}$\\ [1.5ex]\hline
Sums$\upperPhant$ & $P$ & $Q$ & $R$\\ \hline          
\end{tabular}
\end{center}
\end{table}
\newpage

In many cases it is convenient to abbreviate the strain products. Thus if one writes $h_{1}h_{2}h_{3}= h$, $qr/p^2=\alpha$ and $pq/r^2=\beta$, the products are $h/\alpha,h\alpha\beta$ and $h/\beta$.

{\it Inferences from the table}. -- It is at once evident that the load sums correspond to the products of the strain ratios\Blabel{becker:superposition}, and that zero force answers to unit strain ratios. There are also several reciprocal relations which are not unworthy of attention. If $R=0$ and $Q=-P$, the strain reduces to a pure shear. But the positive force, say $Q$, would by itself produce a dilation $h_{2}$, while the negative force, minus $P$, would produce cubical compression of ratio $h_{1} < 1$. Now a shear is by definition undilatational and therefore, in this case, $h_{1}h_{2}=1$. Hence equal initial stresses of opposite signs produce dilatations of reciprocal ratios. The same two forces acting singly would each produce two shears while their combination produces but one. $Q$ would contract lines parallel to $oz$ in the ratio $1/q$ while minus $P$ would elongate the same lines in the ratio $p/1$. Since the combination leaves these lines unaltered, $p/q=1$. Hence equal loads of opposite signs produce shears of reciprocal ratios. It is easy to show by similar reasoning that equal loads of opposite signs must produce pure distortions and extensions of reciprocal ratios.

{\it Strain as a function of load} -- One may at will regard strains as functions of load\Blabel{becker:invertibility}\Bfootnote{load: force per unit area of the reference configuration, corresponding to the Biot stress} or of final stress\Bfootnote{final stress: force per unit area of the deformed configuration, corresponding to the Cauchy stress}; but there seem to be sufficient reasons for selecting load rather than final stress as the variable. To obtain equations giving results applicable to different substances, the equations must contain constants characteristic of the material as well as forces measured in an arbitrary unit. In other words the forces must be measured in terms of the resistance which any particular substance presents. Now these resistances should be determined for some strain common to all substances for forces of a given intensity. The only such strain is zero strain corresponding to zero force. Hence initial stresses or loads are more conveniently taken as independent variables.\footnote{When the strains are infinitesimal, it is easy to see that load and final stress differ from one another by an infinitesimal fraction of either.}\vspace*{0.5cm}

\begin{center}{\it Argument based on small strains.}\end{center}

{\it Physical hypothesis.} --- In the foregoing no relation has been assumed connecting stress and strain. The stresses and strains corresponding to one another have been enumerated, but the manner of correspondence has not been touched upon. One may now at least imagine a homogeneous elastic substance of such a character as to offer equal resistance to distortion in \newpage\noindent every direction and equal resistance to dilation in every direction. The two resistances may also be supposed independent of one another -- for this is a more general case than that of dependence. The resistance finally may be supposed continuous and everywhere of the same order as the strains.

In such an ideal isotropic substance it appears that the number of independent moduluses cannot exceed two; for a pure shear irrespective of its amount is the simplest conceivable distortion and no strain can be simpler than dilation, while to assume that either strain involved more than one modulus would be equivalent to supposing still simplier strains, each dependent upon one of the units of resistance. It is undoubtedly true that, unless the load-strain curve is a straight line, finite strains involve constants of which infinitesimal strains are independent; but these constants are mere coefficients and not moduluses: for the function being continuous must be developable by Taylor's Theorem\Blabel{becker:analyticity}, and the first term must contain the same variable as the succeeding terms, this variable being the force measured in terms of the moduluses. In this statement it must be understood that the moduluses are to be determined for vanishing strain.\footnote{One sometimes sees the incompleteness of Hooke's law referred to in terms such as ``Young's modulus must in reality be variable.'' This is a perfectly legitimate statement provided that Young's modulus is defined in accordance with it; but the mode of statement does not seem to me an expedient one to indicate the failure of linearity. Let
$\varYoung$ represent Young's modulus regarded as variable and $F$ a force or a stress measured in arbitrary units. Then if $y$ is the length of a unit cube when extended by a force, the law of extension may be written in the form $y=1+F/\varYoung$. Now let $\Young$ be the value of Young's modulus for zero strain, and therefore an absolute constant. Then, assuming the continuity of the functions, one may write $\varYoung$ in terms of $\Young$ thus,
\[
\frac{1}{\varYoung}=\frac{1}{\Young}\ \phi\left(\frac{F}{\Young}\right) = \frac{1}{\Young}\left(1 + \frac{AF}{\Young}+ \frac{BF^2}{\Young^2}+\ldots\,.\right)\,.
\]
But this gives
\[
y = 1 + \frac{F}{\Young} + \frac{AF^2}{\Young^2} + \frac{BF^3}{\Young^3 } + \ldots\ .
\]
so that $1/\varYoung$ merely stands for a development in terms of $F/\Young$. If therefore one defines Young's modulus as the tangent of the curve for vanishing strain, the fact of curvature is expressed by saying that powers of the force (in terms of Young's modulus) higher than the first enter into the complete expression for extension.}

One can determine the general form of the variable in terms of the resistances or moduluses for the ideal isotropic solid defined above. The load effecting dilation\Blabel{becker:dilationalStrain} in simple traction, as was shown above, is exactly one third of the total load, or say $Q/3$; and if $a$ is the unit of resistance to linear dilation, $Q/3a$ is the quantity with which the linear dilation will vary. The components of the shearing stresses in the direction of the traction are each $Q/3$, and, if $c$ is the unit of resistance to this initial stress, the\newpage\noindent corresponding extension will vary with $2Q/3c$. In simple extension all faces of the unit cube remain parallel to their original positions, and the principle of superposition is applicable throughout the strain. Hence the total variable may be written $Q\left(\frac{1}{3a}+\frac{2}{3c}\right)$. The intensity of $Q$ will not affect the values of the constants $a$ and $c$ which indeed should be determined for vanishing strain as has been pointed out.

The quantities $a$ and $c$ have been intentionally denoted by unusual letters. In English treatises it is usual to indicate the modulus of cubical dilation by $\bulk$ and the modulus of distortion by $\shear$. With this nomenclature $a=3\bulk$ and $c=2\shear$. Using the abbreviation $\Young$ for Young's modulus the variable then becomes\Blabel{becker:modulusConversion}
\[
Q\left(\frac{1}{9\bulk} + \frac{1}{3\shear}\right) = Q/\Young\,.
\]
Since this is the form of the variable whether $Q$ is finite or infinitesimal, the length of the strained cube according to the postulate of continuity must be developable in terms of $Q/\Young$ and cannot consist, for example, solely of a series of terms in powers of $Q/9\bulk$ plus a series of powers of $Q/3\shear$; in other words the general term of the development must be of the form $A_m(Q/\Young)^m$ and not $A_{m}(Q/9\bulk)^m+B_{m}(Q/3\shear)^m$.

{\it Form of the functions}. -- If $\alpha$ is the ratio of shear produced by the traction $Q$ in the ideal isotropic solid under discussion, $\alpha$ must be some continuous function of $Q/3\shear$. So too if $h$ is the ratio of linear dilation, $h$ is some continuous function of $Q/9\bulk$. The length of the strained mass\Blabel{becker:strainedMass} is $\alpha^2 h$, and this must be a continuous function of $Q/\Young$. If then $f,\varphi$ and $\psi$ are three unknown continuous functions, one may certainly write
\begin{equation}
\label{1}
\alpha^2 = f\left(\frac{Q}{3\shear}\right);\qquad h = \varphi\left(\frac{Q}{9\bulk}\right);\qquad
   \alpha^2 h = \psi\left(\frac{Q}{\Young}\right)\,.
\end{equation}
It also follows from the definitions of $\alpha$ and $h$ that
\begin{equation}
\label{2}
1 = f(0);\qquad 1 = \varphi(0);\qquad 1 = \psi(0)\,.
\end{equation}

For the sake of brevity let $Q/3\shear=\Nu-xi$ and $Q/9\bulk=\Kappa-eta$. Then $\Nu-xi$ and $\Kappa-eta$ may be considered algebraically as independent of one another even if an invariable relation existed between $\shear$ and $\bulk$; for since in simple traction, the faces of the isotropic cube maintain their initial direction, the principle of superposition is applicable; and to put $\shear=\infty$ or $\bulk=\infty$ is merely equivalent to considering only {\it that part} of a strain due respectively to
\begin{center}
\small{AM. JOUR. SCI. -- THIRD SERIES, VOL. XLVI, No. 275. -- Nov., 1893.}
\end{center}
\newpage\noindent
\enlargethispage{1.5cm}\noindent
compressibility or to pure distortion.\footnote{\scriptsize Compare Thomson and Tait Nat. Phil. section 179.} Now the functions are related by the equation\Blabel{becker:UNUSEDsuperposition}  
\begin{equation}
\label{3}
f(\Nu-xi)\,\varphi(\Kappa-eta) = \psi(\Nu-xi + \Kappa-eta)
\end{equation}
and if $\Nu-xi$ and $\Kappa-eta$ are alternately equated to zero
\[
f(\Nu-xi)=\psi(\Nu-xi)\quad \mbox{and}\quad \varphi(\Kappa-eta) = \psi(\Kappa-eta)\,.
\]
Hence the three functions are identical in form\footnote{\scriptsize This proposition is vital to the whole demonstration. Another way of expressing it is as follows: -- If the functions are continuous,
\[
\alpha^2h = 1 + A\left(\frac{Q}{3\shear}+\frac{Q}{9\bulk}\right) + B\left(\frac{Q}{3\shear} + \frac{Q}{9\bulk}\right)^2 + \ldots
\]
where $ A, B$, etc. are constant coefficients. Then since $\shear$ and $\bulk$ are algebraically independent, or since the principle of superposition is applicable, the development of $\alpha^2$ is found by making $h=1$ and $\bulk=\infty$. Thus
\[
\alpha^2 = 1 + \frac{AQ}{3\shear} + B\left(\frac{Q}{3\shear}\right)^2 + \ldots
\]
$A, B$, etc., retaining the same values as before. Consequently $\alpha^2$ is the same function of $Q/3\shear$ that $\alpha^2 h$ is of $Q/\Young$. By equating $\alpha$ to unity and $\shear$ to infinity, it appears that $h$ also is of the same form as $\alpha^2h$.

There is the closest connection between this method of dealing with the three functions and the principle, that when an elastic mass is in equilibrium, any portion of it may be supposed to become infinitely rigid and incompressible without disturbing the equilibrium. For to suppose that in the development of $\alpha^2h,\bulk=\infty$ is equivalent to supposing a system of external forces equilibrating the forces $Q/9\bulk$. This again is simply equivalent to assorting the applicability of the principle of superposition to the case of traction.

In pure elongation, unaccompanied by lateral contraction, it is easy to see that $h=\alpha$ and that $\alpha$ varies as $Q/6\shear$. In this case also $6\shear=9\bulk$ because Poisson's ratio is zero. Hence without resorting to the extreme cases of infinite $\shear$ or $\bulk$, it appears that $h$ is the same function of $9\bulk$ that $\alpha$ is of $6\shear$. This accords with the result reached in (\ref{5}) without sufficing to prove that result.\\[14mm]} or (\ref{3}) becomes
\begin{equation}
\label{4}
f(\Nu-xi)\,f(\Kappa-eta) = f(\Kappa-eta + \Nu-xi)\,.
\end{equation}
Developing the second member by Taylor's theorem and dividing by $f(\Kappa-eta)$ gives a value for $f(\Nu-xi)$, viz:
\[
f(\Nu-xi) = 1 + \frac{df(\Kappa-eta)}{d\Kappa-eta}\cdot \frac{1}{f(\Kappa-eta)} \cdot \Nu-xi + \ldots\,.
\]
Since the two variables are algebraically independent, this equation must answer to McLaurin's Theorem, which implies that the expression containing $\Kappa-eta$ is constant, its value being say $b$. Then
\[
\frac{df(\Kappa-eta)}{f(\Kappa-eta)} = b\,d\Kappa-eta\,.
\]
Hence since $f(0)=1$
\[
f(\Kappa-eta) = \e^{b\,\Kappa-eta}
\]
and since all three functions have the same form
\[
f(\Nu-xi + \Kappa-eta) = \e^{b\,(\Nu-xi+\Kappa-eta)}= 1 + b\,Q/\Young + \ldots
\]
\newpage
\noindent
Here $b/\Young$ is the tangent of the load-strain curve for vanishing strain, and this by definition is $1/\Young$, so that $b = 1$.

It appears then that the equations sought for the load-strain functions are
\begin{equation}
\label{5}
\alpha^2 = \e^{Q/3\shear}\,;\qquad h = \e^{Q/9\bulk}\,;\qquad \alpha^2 h = \e^{Q/\Young}\,;
\end{equation}
a result which can also be reached from (\ref{4}) without the aid of Taylor's theorem.

{\it Tests of the equations.} -- These equations seem to satisfy all the kinematical conditions deduced on preceding pages. It is evident that opposite loads of equal intensity give shears, dilations and extensions of reciprocal ratios and that the products of the strain ratios vary with the sums of the loads. It is also evident that infinite forces and such only will give infinite strains. A very important point is that these equations represent a shear as held in equilibrium by the same force system whether this elementary strain is due to positive or negative forces. If any other quantity (not a mere power of $Q$ or the sum of such powers), such as the final stress were substituted for the load $Q$, a pure shear would be represented as due to different force systems in positive and negative strains which would be a violation of the conditions of isotropy.\footnote{Let a shearing strain be held in equilibrium by two loads, $Q/3$ and minus $Q/3$. If a second equal shear at right angles to the first is so combined with it that the tensile axes coincide, the entire tensile load is $2Q/3$. If on the other hand the two shears are combined by their contractile axes, the total pressure is $2Q/3$. In the first case the area of the deformed cube measured perpendicularly to the direction of the tension is $1/\alpha^2$, and if $Q'$ is the final stress, $Q'/\alpha^2 = 2Q/3$ or $Q' = 2Q\alpha^2/3$. In the second case the area on which the pressure acts is $\alpha^2$ and if the stress is $Q', Q'' = - 2Q/3\alpha^2.$ Thus $Q' = - Q'' \alpha^4$. Hence equal final stresses of opposite signs cannot produce shears of reciprocal ratios in an isotropic solid. The same conclusion is manifestly true of any quantity excepting $Q$ or an uneven power of $Q$ or the sum of such uneven powers.} One might suppose more than two independent moduluses to enter into the denominator of the exponent; but this again would violate the condition of isotropy by implying different resistances in different directions. Any change in the numerical coefficients of the moduluses would imply a different partition of the load between dilation and distortion, which is inadmissible. It would be consistent with isotropy to suppose the exponent of the form $(Q/\Young)^{1+2c}$; but then, if $c$ exceeds zero, the development of the function would contain no term in the first power of the variable and the postulate that strains and loads are to be of the same order would not be fulfilled. The reciprocal relations of load and strain would be satisfied and the loads would be of the same order as the strains, if one were to substitute a series of uneven powers of the variables for $\Nu-xi$ and $\Kappa-eta$. Such series are for example the developments of $\tan\Nu-xi$ and $\tan \Kappa-eta$.

\newpage
\noindent
In a case of this kind, however, $\alpha^2 h$ would not be a function of $Q/\Young = \Nu-xi + \Kappa-eta$ excepting for infinitesimal strain; the exponent then taking the form of a series of terms
$A_{m}\left(\Nu-xi^m + \Kappa-eta^m\right)$ instead of $A_{m}(\Nu-xi + \Kappa-eta)^m$. Finally it is conceivable that the expanded function should contain in the higher terms moduluses not appearing in the first variable term; but this would be inconsistent with continuity. In short I have been unable to devise any change in the functions which does not conflict with the postulate of isotropy as defined or with some kinematical condition.

{\it Abbreviation of proof.} -- In the foregoing the attempt has been made to take a broad view of the subject in hand lest some important relation might escape attention. Merely to reach the equations (\ref{5}) only the following steps seem to be essential. Exactly one-third of the external initial stress in a simple traction is employed in dilation, and of the remainder one half is employed in each of the two shears. An ideal isotropic homogeneous body is postulated as a material presenting equal resistance to strain in all directions, the two resistances to deformation and dilation being independent of one another; the strains moreover are to be of the same order as the loads, and continuous functions of them. In such a mass the simplest conceivable strains, shear and dilation, can each involve only a single unit of resistance or modulus. The principle of superposition is applicable to a simple traction applied axially to the unit cube however great the strain. It follows that the length of the strained unit cube is a function of $Q/\Young$.

Together these propositions and assumptions give (\ref{1}) and without further assumptions the final equations sought (\ref{5}) follow as a logical consequence.

{\it Data from experiment.} -- No molecular theory of matter is essential to the mechanical definition of an isotropic substance. An isotropic homogeneous body is one a sphere of which behaves to external forces of given intensity and direction in the same way however the sphere may be turned about its center. There may be no real absolutely isotropic substance, and if there were such a material we could not ascertain the fact, because observations are always to some extent erroneous. It is substantially certain, however, that there are bodies which approach complete symmetry so closely that the divergence is insensible or uncertain. Experience therefore justifies the assumption of an isotropic substance as an approximation closely representing real matter.

All the more recent careful experiments, such as those of Amagat and of Voigt, indicate that Cauchy's hypothesis, leading for isotropic substances to the relation $3\bulk = 5\shear$, is very far from being fulfilled by all substances\newpage\noindent of sensibly symmetrical properties. This is substantially a demonstration that the molecular constitution of matter is very complex,\footnote{Compare Lord Kelvin's construction of the system of eight molecules in a substance not fulfilling Poisson's hypothesis in his Lectures on Molecular Dynamics.} but provided that the mass considered is very large relatively to the distances between molecules this complexity does not interfere with the hypothesis that pure shear and simple dilation can each be characterized by one constant only.

The continuity of the load-strain function both for loads of the same sign and from positive to negative loads is regarded as established by experiment for many substances; and equally well established is the conclusion that for small loads, load and strain are of the same order.\footnote{Compare B.\,de Saint-Venant in his edition  of Navier's Le\c cons. 1864. p. 14, and Lord Kelvin, Encyc. Brit. 9th ed. Art. Elasticity, Section 37.} In other words Hooke's law is applicable to minute strains. Perfect elastic recovery is probably never realized, but it is generally granted that some substances approach this ideal under certain conditions so closely as to warrant speculation on the subject.

These results appear to justify the assumptions made in the paragraph headed ``physical hypothesis'' as representing the most important features of numerous real substances. On the other hand viscosity, plasticity and ductility have been entirely ignored; so that the results are applicable only to a part of the phenomena of real matter.

{\it Stress-strain function.} -- It is perfectly easy to pass from the load-strain function to the stress-strain function for the ideal solid under discussion. The area of the extended cube is its volume divided by its length of $h^3/\alpha^2 h$. Hence if $Q'$ is the stress, or force per unit area, $Q'h^2/\alpha^2 = Q$. Therefore the stress-strain function is
\[
(\alpha^2 h)^{\alpha^2/h^2} = \e^{Q'/\Young}
\]
an equation which though explicit in respect to stress and very compact is not very manageable. If one writes $\alpha^2 h = y$ and $h/\alpha = x$, the first member of this equation becomes $y^{1/x^2}$. Here $x$ and $y$ are the co\"ordinates of the corner of the strained cube.

{\it Verbal statement of law.} -- If one writes $\alpha^2 h-1 = f$, the last of equations (\ref{5}) gives
\[
df = (1 + f)\,d[Q/\Young]
\]
or the increment of strain is proportional to the increment of load and to the length of the strained mass. This is of course the ``compound interest law'' while Hooke's law answers to simple interest.

\newpage
\enlargethispage{0.2cm}
{\it Curves of absolute movement.} -- Let $\Poisson$ be Poisson's ratio

\begin{minipage}{0,1 \textwidth}
or
\end{minipage}
\begin{minipage}{0,75 \textwidth}
\[
\Poisson = \frac{3\bulk-2\shear}{2(3\bulk+\shear)}\,.
\]
\end{minipage}

Let $x_{0}\,y_{0}$ be the original positions of a particle in an unstrained bar, and let $xy$ be their positions after the bar has been extended by a load $Q$. Then $x = x_{0}h/\alpha$ and $y = y_{0}\,\alpha^2 h$. It also follows from (\ref{5}) that $\alpha^{6\shear}=h^{9\bulk}$, whence it may easily be shown that the path of the particle is represented by the extraordinarily simple equation\footnote{On Cauchy's hypothesis\Blabel{becker:cauchyHypothesis} $\Poisson = 1/4$, which, introduced into this equation, implies that the volume of the strained cube is the square root of its length.}
\begin{equation}
\label{6}
x\,y^\Poisson = x^{}_{0}\,y^\Poisson_{0}\,.
\end{equation}

If one defines Poisson's ratio as the ratio of lateral contraction to axial elongation, its expression is by definition
\[
\Poisson = - \frac{dx}{x}\big{/}\frac{dy}{y} = - \frac{y}{x}\,\frac{dx}{dy}\,;
\]
and this, when integrated on the hypothesis that $\Poisson$ is a constant, gives (\ref{6}). Thus for this ideal solid, the ratio of lateral contraction to linear elongation is independent of the previous strain.

The equation (\ref{6}) gives results which are undeniably correct in three special cases. For an incompressible solid $\Poisson = 1/2$, and (\ref{6}) becomes $x^2 y =$ constant, or the volume remains unchanged. For a compressible solid of infinite rigidity $\Poisson = -1$ and (\ref{6}) becomes $x/y =$ constant so that only radial motion is possible. For linear elongation unaccompanied by lateral extension $\Poisson = 0$, and (\ref{6}) gives $x =$ constant.\footnote{It seems possible to arrive at the conclusion that $\Poisson$ is constant by discussion of these three cases. Let $g$ and $-f$ be small axial increments of strain due to a small increment of traction applied to a mass already strained to any extent. Let it also be supposed that the moduluses are in general functions of the
co\"ordinates, so that $\shear$ and $\bulk$ are only limiting values for no strain. Then, by the ordinary analysis of a small strain (Thomson and Tait, section 682), one may at least write for an isotropic solid
\[
\begin{array}{rcl}
 g & = & \displaystyle P\left(\frac{1}{3\shear\,[1+f_{1}(x)]} + \frac{1}{9\bulk\,[1+f_{2}(x)]}\right)\,,\\ [4ex] 
-f & = & \displaystyle P\left(\frac{1}{6\shear\,[1+f_{3}(x)]} - \frac{1}{9\bulk\,[1+f_{4}(x)]}\right)\,,
\end{array}
\]
where $f(x)$ is supposed to disappear with the strain. These values represent each element of the axial extension and each element of the lateral contraction as wholly independent. The value of $\Poisson$ is $-f/g$. Now for an incompressible substance, as mentioned in the text, $\Poisson = 1/2$ and the formula gives
\[
\Poisson = \frac{1}{2} \cdot \frac{1+f_{1}(x)}{1+f_{3}(x)}\,,\quad\mbox {so that}\ f_{1}(x)=f_{3}(x)\,. 
\]
Again for $\shear = \infty$ only dilation is possible, or $\Poisson = -1$, while the formula gives
\vspace{1cm}
\[
\Poisson = -\,1 \cdot \frac{1+f_{2}(x)}{1+f_{4}(x)}\ ,\quad \mbox{so that}\ f_{2}(x)=f_{4}(x).
\]
For pure elongation the lateral contraction is by definition zero, or $\Poisson = 0$, and the formula is
\[
\Poisson = \frac{f_{3}(x)-f_{4}(x)}{2[1+f_{2}(x)]+[1+f_{1}(x)]}\ ,\quad \mbox{whence}\ f_{4}(x)=f_{3}(x)\,.
\]
Hence all four functions of $x$ are identical and $\Poisson$ reduces to its well known constant-form. --- With $\Poisson$ as a constant equation (\ref{6}) follows from the definition of $\Poisson$; and substituting $\alpha^2 h=y/y_{0}$ and $h/\alpha=x/x_{0}$ gives $\alpha^{6\shear}=h^{9\bulk}$. If $W=6\,\shear\log\,\alpha$ one may then write
\[
\alpha = \e^{W/6\shear}\,;\quad h = \e^{W/9\bulk}\,;\quad \alpha^2 h = \e^{W/\Young} = 1 + W/\Young + \ldots\,.
\]
Here experiment shows that $W$ may be regarded either as load or stress: and reasoning indicates that it must be considered as load if $\Young$ is determined for vanishing strain.}

\vspace*{0.5cm}

\begin{center}{\it Argument from finite vibrations}\end{center}

{\it Sonorous vibrations finite.} --- In the foregoing pages the attempt has been made to show, that a certain definition of an isotropic solid in combination with purely kinematical propositions leads to a definite functional expression for the load-strain curve. The definition of an isotropic solid is that usual except among elasticians who adhere to the rariconstant hypothesis, and it seems to be justified by experiments on extremely small strains. But the adoption of this definition for bodies under finite strain is, in a sense, extrapolation. It is therefore very desirable to consider the phenomena of such strains as cannot properly be considered infinitesimal.

It is usual to treat the strains of tuning forks and other sonorous bodies as so small that their squares may be neglected, and the constancy of pitch of a tuning fork executing vibrations of this amplitude has been employed by Sir George Stokes to extend the scope of Hooke's law to moving systems. It does not appear legitimate, however, to regard strongly excited sonorous bodies as only infinitesimally strained. Tuning forks sounding loud notes perform vibrations the amplitudes of which are sensible fractions of their length. Now it is certain that no elastician would undertake to give results for the strength of a bridge, or in other words he would deny that such flexures were so small as to justify neglect of their squares.\footnote{It is scarcely necessary to point out that many of the uses to which springs are put, in watches for example, afford excellent evidence of the continuity of the load-strain function for finite distortions.}

{\it Sonorous vibrations isochronous.} --- The vibrations of sonorous bodies seem to be perfectly isochronous, irrespective of the amplitude of vibration. Were this not the case, a tuning-fork strongly excited would of course sound a different note from that which it would give when feebly excited. Neither\newpage\noindent musicians nor physicists have detected any such variation of pitch which, if sensible, would render music impossible. The fact that the most delicate and accurate microchronometrical instruments yet devised divide time by vibrations of forks, is an additional evidence that these are isochronous. Lord Kelvin has even suggested the vibrations of a spring in a vacuum as a standard of time almost certainly superior to the rotation of the earth, which is supposed to lose a few seconds in the course of a century.\footnote{Nat.\,Phil., sections 406 and 830.}

It is therefore a reasonable hypothesis in the light of experiment that the load strain function is such as to permit of isochronous vibrations; but to justify this conclusion from an experimental point of view, it must also be shown that Hooke's law is incompatible with sensibly isochronous vibration. I shall therefore attempt to ascertain what load-strain function fulfills the condition of perfect isochronism (barring changes of temperature) and then to make a quantitative comparison between the results of the law deduced and those derived from Hooke's law.

{\it Application of moment of momenta.} --- If the cube circumscribed about the sphere of unit radius is stretched by opposing initial stresses and then set free, it will vibrate; and the plane through the center of inertia perpendicular to the direction of the stress will remain fixed. Each half of the mass will execute longitudinal vibrations like those of a rod of unit length fixed at one end, and it is known that the cross section of such a rod does not affect the period of vibration, because each fiber parallel to the direction of the external force will act like an independent rod. Hence attention may be confined to the unit cube whose edges coincide with the positive axes of
co\"ordinates, the origin of which is at the center of inertia of the entire mass.

The principle of the moment of momenta is applicable to one portion of the strain which this unit cube undergoes during vibration. The moment of a force in the $xy$ plane relatively to the axis of $oz$, being its intensity into its distance from this axis, is the moment of the tangential component of the force and is independent of the radial force component. Now dilation is due to radial forces and neither pure dilation nor any strain involving dilation can be determined by discussion of the moments of external forces. Hence the principle of the moment of momenta applies only to the distortion of the unit cube. This law as applied to the $xy$ plane consequently governs only the single shear in that plane.

The principle of the moment of momenta for the $xy$ plane may be represented by the formula

\newpage

\begin{equation}
\label{7}
\frac{d}{dt}\,\Sigma m\left(x_{1}\frac{dy_{1}}{dt} - y_{1}\frac{dx_{1}}{dt}\right) =
     \Sigma\,(x_{1} Y - y_{1}X)\,,
\end{equation}
where the second member expresses the moments of the external forces, which are as usual measured per unit area, and $x_{1}\,y_{1}$ are the co\"ordinates of any point the mass of which is $m$.

{\it Reduction of equation (\ref{7}).} --- Let $x$ and $y$ represent the position of the corner of the strained cube; then the abscissa of the center of inertia of the surface on which the stress $Y$ acts is $x/2$, and since $Y$ is uniform, $\Sigma x_{1}\,Y=xY/2$. Similarly $\Sigma y_{1}X=yX/2$. Now
$xY$ and $yX$ may also be regarded as the loads or initial stresses acting on the two surfaces of the mass parallel respectively to $ox$ and $oy$, and in a shear these two loads are equal and opposite. Hence the second member of (\ref{7}) reduces to $xY$. It has been shown above that, if $Q$ is an initial tractive load, $Q/3$ is the common value of the two equal and opposite loads producing one shear. But to obtain comparable results for shear dilation and extension, $Q/3$ must be measured in appropriate units of resistance. Since $\Young$ is the unit of resistance appropriate to extension, the separate parts of the force must be multiplied by $\Young$ and divided by resistances characteristic of the elementary strains. Now
\[
\frac{\Young}{2\shear} \cdot \frac{Q}{3} + \frac{\Young}{2\shear} \cdot \frac{Q}{3} + \frac{\Young}{3\bulk} \cdot \frac{Q}{3}
      = Q\,,
\]
and it is evident that $2\shear/\Young$ is the unit in which $Q/3$ should be measured for the single shear.\footnote{In this paper changes of temperature are expressly neglected. The changes of temperature produced by varying stress in a body performing vibrations of small amplitude can be allowed for by employing ``kinetic'' moduluses, which are a little greater than the ordinary ``static'' moduluses. Thomson and Tait, Nat.\,Phil., section 687.} Thus the second member of (\ref{7}) becomes $\Young Q/6\shear$.

This, then, is the value which the moment of the external forces assumes when these hold the strained unit cube in equilibrium. This unit cube forms an eighth part of the cube circumscribed about the sphere of unit radius. When the entire mass is considered, the sum of all the moments of the external forces is zero; since they are equal and opposite by pairs. If the entire mass thus strained is suddenly released and allowed to perform free vibrations, the sum of all the moments of momenta will of course remain zero. On the other hand the quantity $\Young Q/6\shear$ will remain constant. For this load determines the limiting value of the strain during vibration and is independent of the particular phase of vibration, or of the time counted from the instant of release. It may be considered as the moment of the forces which the other parts of the entire material system exert upon the unit cube. 

\newpage

Turning now to the first member of (\ref{7}), values of $x_{1}$ and $y_{1}$ appropriate to the case in hand must be substituted. Each point of the unit cube during shear moves on an equilateral hyperbola, so that if $x_{0}, y_{0}$ are the original co\"ordinates of a point, $x_{1}\,y_{1} = x_{0}\,y_{0}$. For the corner of the cube, whose co\"ordinates are $x$ and $y$, the path is $x\,y = 1$. Now $x_{1}/x_{0} = x$ and $y_{1}/y_{0}= y$ so that
\[
x_{1}\,dy_{1} - y_{1}\,dx_{1} = x_{0}\,y_{0}\,(xdy - ydx)
\]

If $\psi$ is the area which the radius vector of the point $x,y$ describes during strain, it is well known that $2d\psi = x\,dy - y\,dx$ and, since in this case $x\,y = 1$, it is easy to see that
\[
2d\psi = 2d\,[\log y]\,.
\]
Since the quantities $x$ and $y$ refer to a single point, the sign of summation does not affect them, and the first member of (\ref{7}) may be written
\[
\frac{d^2 \log y}{dt^2}\,\Sigma\,2m\,x_{0}\,y_{0}\,.
\]

Here one may write for $m, \rho dx_{0}\,dy_{0}$, where $\rho$ is the constant density of the body; and since the substance is uniform, summation may be performed by double integration between the limits unity and zero. This reduces the sum to $\rho/2$.

{\it Value of $\alpha$.} Equation (\ref{7}) thus becomes
\[
\frac{d^2 \log y}{dt^2}= \frac{2}{\rho}\cdot \frac{\Young Q}{6\shear}
\]
the second member being constant. Counting time from the instant of release, or from the greatest strain, and integrating $y$ between the limits $y = \alpha$ and $y = 1$ gives
\[
\log \alpha = \frac{\Young Q}{6\shear}\cdot \frac{t^2}{\rho}\ .
\]

It is now time to introduce the hypothesis that the vibrations are isochronous. It is a well known result of theory and experiment that a rod of unit length with one end fixed, executing its gravest longitudinal vibrations, performs one complete vibration of small amplitude in a time expressed by $4\sqrt{\rho/\Young}$. In the equation stated above $t$ expresses the time of one-quarter of a complete vibration or the interval between the periods at which $y = 1$ and $y = \alpha$. Hence for a small vibration, $t$ as here defined is $\sqrt{\rho/\Young}$. If the vibrations are to be isochronous irrespective of amplitude, this must also be the value of $t$ in a finite vibration. Hence at once
\[
\alpha = \e^{Q/6\shear} = \e^\psi\,,
\]
the same result reached in (\ref{5}).

\newpage

This result may also be expressed geometrically. The quantity $Q/6\shear$ is simply the area swept by the radius vector of the point $x_{0}=1, y_{0}=1$. This area is also the integral of $ydx$
from $x=1/\alpha$ to $x=1$, or the integral of $xdy$ from $y=\alpha$ to $y=1$. Thus $\psi$ represents any one of three distinct areas. In terms of hyperbolic functions, $\alpha=\sinh\psi+\cosh\psi$ and the amount of shear is $2\sinh \psi$.

It appears then that isochronous vibrations imply that in pure shear the area swept by the radius vector of the corner of the cube, or $\log \alpha$, is simply proportional to the load. The law proposed by Hooke implies that the length $\alpha-1$ is proportional to the same load. The law commonly accepted as Hooke's makes $\alpha-1$ proportional to the final stress, or $(\alpha-1)/\alpha$ proportional to the load.

{\it Value of $h$.} --- Knowing the value of $\alpha$, the value of $h$ can be found without resort to the extreme case $\shear=\infty$. In the case of pure elongation, unattended by lateral contraction, $h=\alpha$ and $9\bulk=6\shear$. If $\alpha_{1}$ and $h_{1}$ are the ratios for this case,
\[
\alpha_{1} = \e^{Q/9\bulk}\,;\qquad h_{1} = \e^{Q/9\bulk}\,;\qquad \alpha_{1}^2\,h_{1}= \e^{Q/3\bulk}\,.
\]
If three such elongations in the direction of the three axes are superimposed, the volume becomes
\[
(\alpha_{1}^2\,h_{1})^3 = \e^{Q/\bulk}\,,
\]
and this represents a case of pure dilation without distortion. Here however $\alpha_{1}=h_{1}$ and therefore the case of no distortion, irrespective of the value of $\shear$, is given by
\[
h^9 = \e^{Q/\bulk}\ .
\]

The values of $\alpha$ and $h$ derived from the hypothesis of isochronous vibrations when combined evidently give the same value of $\alpha^2\,h$ which was obtained from kinematical considerations and the definition of isotropy in equation (\ref{5}).

{\it Law of elastic force.} --- Let $s$ be the distance of a particle on the upper surface of a vibrating cube from its original position or
\[
s = \alpha^2\,h - 1 = \e^{Q/\Young} - 1\, .
\]
Then the elastic force per unit volume is minus $Q$, or
\[
\rho\,\frac{d^2s}{dt^2} = -\,Q = -\,\Young\log(s+1) = -\,\Young s + \frac{\Young s^2}{2} - \ldots\,.
\]
When the excursions of the particle from the position of no strain are very small, this becomes
\[
\rho\,\frac{d^2s}{dt^2} = -\,\Young s
\]
a familiar equation leading to simple harmonic motion.

\newpage

{\it Limitation of harmonic vibrations.} -- While the theory of harmonic vibrations is applicable to very small vibrations on any theory in which the load strain curve is represented as continuous and as making an angle with the axes whose tangent is finite, it appears to be inapplicable in all cases where the excursions are sufficient to display the curvature of the locus. If the attraction toward the position of no strain in the direction of $oy$ is proportional to $y-1$, then in an isotropic mass there will also be an attraction in the direction of $ox$ which will be proportional to $1-x$. The path of the particle at the corner of a vibrating cube will therefore be the resultant of two harmonic motions whose phases necessarily differ by exactly one-half of the period of vibration, however great and however different the amplitudes may be. This resultant is well known to be a straight line. Hence the theory precludes all displacements excepting those which are so small that the path of the corner of the cube may properly be regarded as rectilinear. It seems needless to insist that such cannot be the case for finite strains in general.

There is at least one elastic solid substance, vulcanized india rubber, which can be stretched to several times its normal length without taking a sensible permanent set. Now if the ideal elastic solid stretched to double its original length (or more) were allowed to vibrate, the hypothesis of simple harmonic vibration implies that this length would be reduced to zero (or less) in the opposite phase of the vibration, a manifest absurdity.

{\it Variation of pitch by Hooke's law.} -- It remains to be shown that if the commonly accepted law were applicable to finite strain, sonorous vibrations would be accompanied by changes of pitch which could scarcely have escaped detection by musicians and physicists. Experiments have shown that the elongation of steel piano wire may be pushed to $0\cdot 0115$ before the limit of elasticity is reached.\footnote{From experiments on English steel piano wire by Mr.\,D.\,McFarlane.} Since virtuosos not infrequently break strings in playing the piano, it is not unreasonable to assume that a one per cent elongation is not seldom attained. In simple longitudinal vibration the frequency of vibration is expressed by $1/4$ of $\sqrt{\Young/\rho}$, and if according to Hooke's law, $s = Q/\Young$, where $Q$ is the load, the number of vibrations, $v$, may be written
\[
v = \frac{1}{4}\,\sqrt{\frac{Q}{s\,\rho}}\,.
\]
If, on the other hand, according to the theory of this paper, $\log(1+s) = Q/\Young$ the number of vibrations, $u$, may be written

\newpage

\[
u = \frac{1}{4}\,\sqrt{\frac{Q}{\rho\log(1+s)}}\,,\quad \mbox{so that}\ \frac{v}{u} =
    \sqrt{\frac{\log(1+s)}{s}}\,.
\]
If $s = 0\cdot 01$, this expression gives $v/u=400/401$.

It would appear then that on the hypothesis of Hooke, a note due to longitudinal vibrations of about the pitch $G_{3}$ would give a lower note when sounding fortissimo than when sounding pianissimo, and that the difference would be one vibration per second, or one in four hundred. But according to Weber's experiments experienced violin players distinguish musical intervals in melodic progressions no greater than $1000/1001$, while simultaneous tones can be still more sharply discriminated.\footnote{Helmholtz, Tonempfindungen, page 491.} The value of $s$ corresponding to $v/u = 1000/1001$ is only $0\cdot 004$, and consequently strains reaching only about one-third of the elastic limit of piano wire should give sensible variations of tone during the subsidence of vibrations if Hooke's law were correct.

Longitudinal vibrations are not so frequently employed to produce notes as transverse vibrations. The quantity $\Young/\rho$ enters also into the expression for the frequency of transverse vibrations though in a more complex manner. In the case of rods not stretched by external tension, the ratio $v/u$ would take the same form as in the last paragraph. One theory of the tuning-fork represents it as a bar vibrating with two nodes, and therefore as comparable to a rod resting on two supports.

A pair of chronometrical tuning-forks could be adjusted to determine much smaller differences in the rate of vibration than $1000/1001$; for the relative rate of the forks having been determined on a chronographic cylinder for a certain small amplitude, one fork could be more strongly excited than the other and a fresh comparison made. The only influences tending to detract from the delicacy of this method of determining whether change of amplitude alters pitch, would seem to be the difficulty of sustaining a constant amplitude and the difference of temperature in the two forks arising from the dissipative action of viscosity.

{\it Conclusion.} -- The hypothesis that an elastic isotropic solid of constant temperature is such as to give absolutely isochronous longitudinal vibrations leads to the conclusion $\log(\alpha^2\,h)=Q/\Young$ without any apparent alternative. Comparison with the results of Hooke's law shows that, if this law were applicable to finite vibrations, easily sensible changes of pitch would occur during the subsidence of vibrations in strongly excited sonorous bodies. -- The logarithmic law is the same deduced in the earlier part of the paper from the ordinary definition of the ideal elastic isotropic solid, based \newpage \noindent upon experiments on very smalls strains, in combination with purely kinematical considerations. -- There can be no doubt that the law here proposed would simplify a great number of problems in the dynamics of the ether and of sound, as well as questions arising in engineering and in geology, because of the simple and plastic nature of the {\it logarithmic} function. In the present state of knowledge, the premises of the argument can scarcely be denied; whether the deductions have been logically made must be decided finally by better judges than myself.

\vspace*{4mm}
\small{Washington, D.\,C., July, 1893.}

\newpage
\thispagestyle{empty}
\noindent{\bf Incorporated changes to the original text}\\
\\
\begin{tabular}{lccl}
Young's modulus              &        M      & $\longrightarrow$ & $E$\\[2ex]
variable Young's modulus		&			$\mu$		&	$\longrightarrow$	&	$\varYoung$\\[2ex]
Poisson's number             &     $\sigma$   & $\longrightarrow$ & $\nu$\\[2ex]
modulus of distortion (shear modulus)       &        n      & $\longrightarrow$ & $G$\\[2ex] 
modulus of cubical dilation (bulk modulus) &        k      & $\longrightarrow$ & $K$\\[2ex]
Euler's number				& $\varepsilon$ & $\longrightarrow$ & $e = 2.718\ldots$\\[2ex]
Variables 					&      $\nu,\kappa$		& $\longrightarrow$ & $\xi,\eta$\\[2ex]	
natural logarithm			     &      $\ln$    & $\longrightarrow$	& $\log$\\[2ex]
the direction cosines of a plane	&		$\lambda,\mu,\nu$		& $\longrightarrow$	& $n_{1},n_{2},n_{3}$\\[2ex]
the amounts of traction			&	$\mathfrak{R},\mathfrak{N},\mathfrak{T}$	& $\longrightarrow$	& $\Bresultant,\Bnormal,\Btangent$\\[2ex]
the principle stresses		&		$N_{1},N_{2},N_{3}$		& $\longrightarrow$  & $\Cauchy_{1},\, \Cauchy_{2},\, \Cauchy_{3}$\\[2ex]
hyperbolic functions						 & Sin, Cos     & $\longrightarrow$   &$\sinh, \cosh$\\[2ex]
axial increment of strain						 & $e$     & $\longrightarrow$   &$g$\\[2ex]
formula for Poisson's number	&	$\nu = \frac{3K-2G}{2(3K+2G)}$&$ \xlongrightarrow{\text{corrected}}$&$ \frac{3K-2G}{2(3K+G)}$\\
on page 348
\end{tabular}\\[4mm]


\begin{thebibliography}{10}

\bibitem{al2012}
A.~H. Al-Mohy and N.~J. Higham.
\newblock Improved inverse scaling and squaring algorithms for the matrix
  logarithm.
\newblock {\em SIAM Journal on Scientific Computing}, 34(4):C153--C169, 2012.

\bibitem{al2013}
A.~H. Al-Mohy, N.~J. Higham, and S.~D. Relton.
\newblock Computing the {F}r{\'e}chet derivative of the matrix logarithm and
  estimating the condition number.
\newblock {\em SIAM Journal on Scientific Computing}, 35(4):C394--C410, 2013.

\bibitem{bakerEri54}
M.~Baker and J.~Ericksen.
\newblock Inequalities restricting the form of the stress-deformation relation
  for isotropic elastic solids and {Reiner}-{Rivlin} fluids.
\newblock {\em J. Washington Acad. Sci.}, 44:33--35, 1954.

\bibitem{ball1977}
J.~Ball.
\newblock Convexity conditions and existence theorems in nonlinear elasticity.
\newblock {\em Archive for Rational Mechanics and Analysis}, 63:337--403, 1977.

\bibitem{ball1987}
J.~Ball and R.~James.
\newblock Fine phase mixtures as minimizers of energy.
\newblock {\em Archive for Rational Mechanics and Analysis}, 100:13--52, 1987.

\bibitem{batra1998}
R.~Batra.
\newblock Linear constitutive relations in isotropic finite elasticity.
\newblock {\em Journal of Elasticity}, 51(3):243--245, 1998.

\bibitem{becker1892}
G.~F. Becker.
\newblock Finite {H}omogeneous {S}train, {F}low and {R}upture of {R}ocks.
\newblock {\em Bulletin of the Geological Society of America}, 4:13--90, 1892.

\bibitem{becker1893}
G.~F. Becker.
\newblock The {F}inite {E}lastic {S}tress-{S}train {F}unction.
\newblock {\em American Journal of Science}, 46:337--356, 1893.

\bibitem{becker1898}
G.~F. Becker.
\newblock Kant as a natural philosopher.
\newblock {\em American Journal of Science}, 5(26):97--112, 1898.

\bibitem{becker1904}
G.~F. Becker.
\newblock {\em Experiments on Schistosity and Slaty Cleavage}.
\newblock Bulletin. U.S. Government Printing Office, 1904.

\bibitem{becker1907}
G.~F. Becker.
\newblock Current {T}heories of {S}chistosity and {S}laty {C}leavage.
\newblock {\em American Journal of Science}, 24:1--17, 1907.

\bibitem{becker1909hyperbolic}
G.~F. Becker and C.~E. van Orstrand.
\newblock {\em Hyperbolic functions}.
\newblock The Smithsonian Institution, 1909.

\bibitem{bell1973}
J.~Bell and C.~Truesdell.
\newblock {\em Mechanics of Solids: Volume 1: The Experimental Foundations of
  Solid Mechanics}.
\newblock Handbuch der {P}hysik. Springer, 1973.

\bibitem{blinowski1998}
A.~Blinowski and J.~Rychlewski.
\newblock Pure shears in the mechanics of materials.
\newblock {\em Mathematics and Mechanics of Solids}, 4:471--503, 1998.

\bibitem{brillouin1925}
L.~Brillouin.
\newblock Les lois de l'\'elasticit\'e sous forme tensorielle valable pour des
  coordonn\'ees quelconques.
\newblock {\em Ann. de Phys.}, 3, 1925.

\bibitem{bruhns2001}
O.~Bruhns, H.~Xiao, and A.~Meyers.
\newblock Constitutive inequalities for an isotropic elastic strain-energy
  function based on {H}encky's logarithmic strain tensor.
\newblock {\em Proceedings of the Royal Society of London},
  457(2013):2207--2226, 2001.

\bibitem{carroll2009}
M.~Carroll.
\newblock Must elastic materials be hyperelastic?
\newblock {\em Mathematics and Mechanics of Solids}, 14(4):369--376, 2009.

\bibitem{ciarlet1988}
P.~Ciarlet.
\newblock {\em Three-Dimensional Elasticity}.
\newblock Number~1 in Studies in mathematics and its applications. Elsevier
  Science, 1988.

\bibitem{culmann1866}
K.~Culmann.
\newblock {\em {D}ie graphische {S}tatik}.
\newblock Verlag von Meyer {\&} Zeller, 1866.

\bibitem{curnier1991}
A.~Curnier and L.~Rakotomanana.
\newblock Generalized strain and stress measures: critical survey and new
  results.
\newblock {\em Engineering Transactions, Polish Academy of Sciences},
  39(3-4):461--538, 1991.

\bibitem{destrade2012}
M.~Destrade, J.~Murphy, and G.~Saccomandi.
\newblock Simple shear is not so simple.
\newblock {\em International Journal of Non-Linear Mechanics}, 47(2):210--214,
  2012.

\bibitem{dingler1928experiment}
H.~Dingler.
\newblock {\em Das {E}xperiment: {S}ein {W}esen und seine {G}eschichte}.
\newblock M{\"u}nchen, 1928.

\bibitem{ebbing2009construction}
V.~Ebbing, D.~Balzani, J.~Schr{\"o}der, P.~Neff, and F.~Gruttmann.
\newblock Construction of anisotropic polyconvex energies and applications to
  thin shells.
\newblock {\em Computational Materials Science}, 46(3):639--641, 2009.

\bibitem{ebbing2009approximation}
V.~Ebbing, J.~Schr{\"o}der, and P.~Neff.
\newblock Approximation of anisotropic elasticity tensors at the reference
  state with polyconvex energies.
\newblock {\em Archive of Applied Mechanics}, 79(6--7):651--657, 2009.

\bibitem{finger1894}
J.~Finger.
\newblock Das {P}otential der inneren {K}r{\"a}fte und die {B}eziehungen
  zwischen den {D}eformationen und den {S}pannungen in elastisch isotropen
  {K\"o}rpern bei {B}er{\"u}cksichtigung von {G}liedern, die bez{\"u}glich der
  {D}eformationselemente von dritter, beziehungsweise zweiter {O}rdnung sind.
\newblock {\em Sitzungsberichte der Akademie der Wissenschaften in Wien}, 44,
  1894.

\bibitem{ghiba2014}
D.~Ghiba, B.~M{\"u}ller, P.~Neff, and G.~Starke.
\newblock On the {C}auchy-stress-stretch invertibility in isotropic nonlinear
  materials.
\newblock {\em in preparation}, 2014.

\bibitem{griggs1935}
D.~Griggs.
\newblock The strain ellipsoid as a theory of rupture.
\newblock {\em American Journal of Science}, 30:121--137, 1935.

\bibitem{hartig1893}
E.~Hartig.
\newblock Der {E}lastizit{\"a}tsmodul des geraden {S}tabes als {F}unktion der
  spezifischen {B}eanspruchung.
\newblock {\em Der {C}ivilingenieur}, 39, 1893.
\newblock
  \url{www.uni-due.de/imperia/md/content/mathematik/ag_neff/hartig_elastizitaetsmodul.pdf}.

\bibitem{hehl2002}
F.~W. Hehl and Y.~Itin.
\newblock The {C}auchy relations in linear elasticity theory.
\newblock {\em J. Elasticity}, 66(2):185--192, 2002.

\bibitem{hencky1923}
H.~Hencky.
\newblock {\"U}ber die {B}eziehungen der {P}hilosophie des {,,A}ls {O}b{''} zur
  mathematischen {N}aturbeschreibung.
\newblock {\em Annalen der Philosophie}, 3:236--245, 1923.

\bibitem{hencky1928}
H.~Hencky.
\newblock {\"U}ber die {F}orm des {E}lastizit{\"a}tsgesetzes bei ideal
  elastischen {S}toffen.
\newblock {\em Zeitschrift f\"ur technische {P}hysik}, 9, 1928.
\newblock
  \url{www.uni-due.de/imperia/md/content/mathematik/ag_neff/hencky1928.pdf}.

\bibitem{hencky1929super}
H.~Hencky.
\newblock {D}as {S}uperpositionsgesetz eines endlich deformierten
  relaxationsf\"ahigen elastischen {K}ontinuums und seine {B}edeutung f{\"u}r
  eine exakte {A}bleitung der {G}leichungen f{\"u}r die z\"ahe {F}l\"ussigkeit
  in der {E}ulerschen {F}orm.
\newblock {\em {A}nnalen der {P}hysik}, 394(6):617--630, 1929.

\bibitem{hencky1929}
H.~Hencky.
\newblock {W}elche {U}mst{\"a}nde bedingen die {V}erfestigung bei der bildsamen
  {V}erformung von festen isotropen {K\"o}rpern?
\newblock {\em Zeitschrift f{\"u}r {P}hysik}, 55, 1929.
\newblock
  \url{www.uni-due.de/imperia/md/content/mathematik/ag_neff/hencky1929.pdf}.

\bibitem{hencky1931}
H.~Hencky.
\newblock The law of elasticity for isotropic and quasi-isotropic substances by
  finite deformations.
\newblock {\em Journal of Rheology}, 2(2):169--176, 1931.

\bibitem{higham2008}
N.~J. Higham.
\newblock {\em Functions of Matrices: Theory and Computation}.
\newblock Society for Industrial and Applied Mathematics, Philadelphia, PA,
  USA, 2008.

\bibitem{hill1970}
R.~Hill.
\newblock Constitutive inequalities for isotropic elastic solids under finite
  strain.
\newblock {\em Proceedings of the Royal Society of London}, 314, 1970.

\bibitem{hutchinson1981}
J.~Hutchinson and K.~Neale.
\newblock Finite strain {$J_2$}-deformation theory.
\newblock {\em Proceedings of the IUTAM Symposium on Finite Elasticity}, pages
  238--247, 1981.

\bibitem{hutte1925}
A.~V. H{\"u}tte.
\newblock {\em Des {I}ngenieurs {T}aschenbuch}.
\newblock W. Ernst \& Sohn, 25 edition, 1925.

\bibitem{imbert1880}
A.~Imbert.
\newblock {\em Recherches th{\'e}oriques et exp{\'e}rimentales sur
  l'{\'e}lasticit{\'e} du Caoutchouc}.
\newblock Goyard, 1880.
\newblock
  \url{www.uni-due.de/imperia/md/content/mathematik/ag_neff/imbert_rubber.pdf}.

\bibitem{jones1975}
D.~Jones and L.~Treloar.
\newblock The properties of rubber in pure homogeneous strain.
\newblock {\em Journal of Physics D: Applied Physics}, 8(11):1285, 1975.

\bibitem{krawietz1975}
A.~Krawietz.
\newblock A comprehensive constitutive inequality in finite elastic strain.
\newblock {\em Archive for Rational Mechanics and Analysis}, 58(2):127--149,
  1975.

\bibitem{Lankeit2014}
J.~Lankeit, P.~Neff, and Y.~Nakatsukasa.
\newblock The minimization of matrix logarithms: On a fundamental property of
  the unitary polar factor.
\newblock {\em Linear Algebra and its Applications}, 449(0):28--42, 2014.

\bibitem{leith1913}
C.~Leith.
\newblock {\em Structural geology}.
\newblock H. Holt and Company, 1913.

\bibitem{annalen1894beib}
G.~L{\"u}beck.
\newblock {\em Beibl{\"a}tter zu den {A}nnalen der {P}hysik}, page 515.
\newblock Number~18. J. A. Barth., Leipzig, 1894.

\bibitem{ludwik1909}
P.~Ludwik.
\newblock {\em Elemente der technologischen {M}echanik}.
\newblock J. Springer, Berlin, 1909.

\bibitem{lueger1894}
O.~Lueger.
\newblock {\em {O}tto {L}uegers {L}exikon der gesamten {T}echnik und ihrer
  {H}ilfswissenschaften, Band III}.
\newblock Deutsche Verlags-Anstalt, 2\textsuperscript{nd} edition, 1906.
\newblock \url{http://www.zeno.org/Lueger-1904/A/Elastizit%C3%A4tsgesetz}.

\bibitem{NeffMartin2013}
R.~Martin and P.~Neff.
\newblock Primary matrix functions and monotonicity.
\newblock Technical report, University of Duisburg-Essen, 2013.

\bibitem{mehmke1897}
R.~Mehmke.
\newblock Zum {G}esetz der elastischen {D}ehnungen.
\newblock {\em Zeitschrift f{\"u}r Mathematik und Physik}, 42(6):327--338,
  1897.

\bibitem{metzler1892}
W.~H. Metzler.
\newblock On the roots of matrices.
\newblock {\em American Journal of Mathematics}, 14(4):326--377, 1892.

\bibitem{mohr1882}
O.~Mohr.
\newblock {\"U}ber die {D}arstellung des {S}pannungszustandes und des
  {D}eformationszustandes eines {K\"o}rperelementes und {\"u}ber die
  {A}nwendung derselben in der {F}estigkeitslehre.
\newblock {\em Der {C}ivilingenieur}, 28:113--156, 1882.

\bibitem{morrey1952}
C.~Morrey.
\newblock Quasi-convexity and the lower semicontinuity of multiple integrals.
\newblock {\em Pacific J. Math}, 2(1):25--53, 1952.

\bibitem{murnaghan1941}
F.~Murnaghan.
\newblock The compressibility of solids under extreme pressures.
\newblock {\em Theodore von Karman Anniversary Volume}, pages 121--136, 1941.

\bibitem{murnaghan1944}
F.~Murnaghan.
\newblock The compressibility of media under extreme pressures.
\newblock {\em Proceedings of the National Academy of Sciences of the United
  States of America}, 30(9):244, 1944.

\bibitem{Neff_Diss00}
P.~Neff.
\newblock {\em Mathematische {A}nalyse multiplikativer {V}iskoplastizit\"at.
  {P}h.{D}. {T}hesis, {Technische Universit\"at Darmstadt}.}
\newblock Shaker Verlag, Aachen, 2000.

\bibitem{henckyTranslation}
P.~Neff, B.~Eidel, and R.~Martin.
\newblock The axiomatic deduction of the quadratic {H}encky strain energy by
  {H}einrich {H}encky.
\newblock {\em arXiv:1402.4027}, 2014.

\bibitem{Neff_Osterbrink_hencky13}
P.~Neff, B.~Eidel, F.~Osterbrink, and R.~Martin.
\newblock Appropriate strain measures: {T}he {H}encky shear strain energy
  $\|{\rm dev}\log\sqrt{F^TF}\|^2$ measures the geodesic distance of the
  isochoric part of the deformation gradient {$\overline{F}\in\rm{SL(3)}$} to
  $\rm{SO(3)}$ in the canonical left invariant {R}iemannian metric on
  $\rm{SL(3)}$.
\newblock {\em in preparation}, 2014.

\bibitem{Neff_Eidel_Osterbrink_2013}
P.~Neff, B.~Eidel, F.~Osterbrink, and R.~Martin.
\newblock A {R}iemannian approach to strain measures in nonlinear elasticity.
\newblock {\em Comptes Rendus M{\'e}canique}, 342(4):254--257, 2014.

\bibitem{Neff_Biot07}
P.~Neff, A.~Fischle, and I.~M\"unch.
\newblock Symmetric {C}auchy-stresses do not imply symmetric {B}iot-strains in
  weak formulations of isotropic hyperelasticity with rotational degrees of
  freedom.
\newblock {\em Acta Mechanica}, 197:19--30, 2008.

\bibitem{NeffGhibaLankeit}
P.~Neff, I.~Ghiba, and J.~Lankeit.
\newblock Rank one convexity of the exponentiated {H}encky-logarithmic strain
  energy in finite elastostatics.
\newblock {\em Preprint arXiv:1403.3843}, 2014.

\bibitem{NeffGhibaSteigmann}
P.~Neff, I.~Ghiba, J.~Lankeit, R.~Martin, and D.~Steigmann.
\newblock Polyconvexity of the exponentiated {H}encky-logarithmic strain energy
  in plane finite elastostatics.
\newblock {\em in preparation}, 2014.

\bibitem{Neff_Jeong_IJSS09}
P.~Neff, J.~Jeong, and H.~Ramezani.
\newblock Subgrid interaction and micro-randomness - novel invariance
  requirements in infinitesimal gradient elasticity.
\newblock {\em Int. J. Solids Struct.}, 46(25-26):4261--4276, 2009.

\bibitem{grioli2013}
P.~Neff, J.~Lankeit, and A.~Madeo.
\newblock On {G}rioli's minimum property and its relation to {C}auchy's polar
  decomposition.
\newblock {\em International Journal of Engineering Science}, 80(0):209--217,
  2014.

\bibitem{Neff_Muench_simple_shear09}
P.~Neff and I.~M\"unch.
\newblock Simple shear in nonlinear {Cosserat} elasticity: bifurcation and
  induced microstructure.
\newblock {\em Cont. Mech. Thermod.}, 21(3):195--221, 2009.

\bibitem{Neff_Nagatsukasa_logpolar13}
P.~Neff, Y.~Nakatsukasa, and A.~Fischle.
\newblock A logarithmic minimization property of the unitary polar factor in
  the spectral norm and the {F}robenius matrix norm.
\newblock {\em to appear in SIAM J. Matrix Analysis}, 2014.

\bibitem{norris2006}
A.~N. Norris.
\newblock Pure shear axes and elastic strain energy.
\newblock {\em Quart. J. Mech. Appl. Math.}, 59:551--556, 2006.

\bibitem{Ogden83}
R.~W. Ogden.
\newblock {\em Non-{L}inear {E}lastic {D}eformations.}
\newblock Mathematics and its Applications. Ellis Horwood, Chichester, 1.
  edition, 1983.

\bibitem{ogden2004}
R.~W. Ogden, G.~Saccomandi, and I.~Sgura.
\newblock Fitting hyperelastic models to experimental data.
\newblock {\em Computational Mechanics}, 34(6):484--502, 2004.

\bibitem{prandtl1924}
L.~Prandtl.
\newblock Elastisch bestimmte und elastisch unbestimmte {S}ysteme.
\newblock In {\em Beitr{\"a}ge zur Technischen Mechanik und Technischen
  Physik}, pages 52--61. Springer Berlin Heidelberg, 1924.

\bibitem{richter1948}
H.~Richter.
\newblock Das isotrope {E}lastizit{\"a}tsgesetz.
\newblock {\em Zeitschrift f{\"u}r angewandte Mathematik und Mechanik},
  28(7/8):205--209, 1948.

\bibitem{richter1949verzerrung}
H.~Richter.
\newblock Verzerrungstensor, {V}erzerrungsdeviator und {S}pannungstensor bei
  endlichen {F}orm{\"a}nderungen.
\newblock {\em Zeitschrift f{\"u}r Angewandte Mathematik und Mechanik},
  29(3):65--75, 1949.

\bibitem{richter1949log}
H.~Richter.
\newblock Zum {L}ogarithmus einer {M}atrix.
\newblock {\em Archiv der Mathematik}, 2(5):360--363, 1949.

\bibitem{richter1950}
H.~Richter.
\newblock {\"U}ber {M}atrixfunktionen.
\newblock {\em Mathematische Annalen}, 122(1):16--34, 1950.

\bibitem{richter1952elastizitatstheorie}
H.~Richter.
\newblock Zur {E}lastizit{\"a}tstheorie endlicher {V}erformungen.
\newblock {\em Mathematische Nachrichten}, 8(1):65--73, 1952.

\bibitem{rivlin1948}
R.~Rivlin.
\newblock Large elastic deformations of isotropic materials. {II.} {S}ome
  uniqueness theorems for pure, homogeneous deformation.
\newblock {\em Philosophical Transactions of the Royal Society of London},
  240(822):491--508, 1948.

\bibitem{Schroeder_Neff01}
J.~Schr\"oder and P.~Neff.
\newblock Invariant formulation of hyperelastic transverse isotropy based on
  polyconvex free energy functions.
\newblock {\em Int. J. Solids Struct.}, 40(2):401--445, 2003.

\bibitem{schroeder_neff2010}
J.~Schr{\"o}der and P.~Neff.
\newblock {\em Poly-, Quasi- and Rank-One Convexity in Applied Mechanics}.
\newblock CISM International Centre for Mechanical Sciences. Springer, 2010.

\bibitem{schroeder2008}
J.~Schr{\"o}der, P.~Neff, and V.~Ebbing.
\newblock Anisotropic polyconvex energies on the basis of crystallographic
  motivated structural tensors.
\newblock {\em Journal of the Mechanics and Physics of Solids},
  56(12):3486--3506, 2008.

\bibitem{truesdell1952}
C.~Truesdell.
\newblock Mechanical foundations of elasticity and fluid dynamics.
\newblock {\em Journal of Rational Mechanics and Analysis}, 1:125--300, 1952.

\bibitem{truesdell1956}
C.~Truesdell.
\newblock Das ungel{\"o}ste {H}auptproblem der endlichen
  {E}lastizit{\"a}tstheorie.
\newblock {\em Zeitschrift f{\"u}r angewandte Mathematik und Mechanik},
  36(3-4):97--103, 1956.

\bibitem{truesdell65}
C.~Truesdell and W.~Noll.
\newblock The non-linear field theories of mechanics.
\newblock In S.~Fl\"ugge, editor, {\em Handbuch der {P}hysik}, volume III/3.
  Springer, Heidelberg, 1965.

\bibitem{truesdell60}
C.~Truesdell and R.~Toupin.
\newblock The classical field theories.
\newblock In S.~Fl\"ugge, editor, {\em Handbuch der {P}hysik}, volume III/1.
  Springer, Heidelberg, 1960.

\bibitem{turner1942}
F.~Turner.
\newblock Interpretation of schistosity in the rocks of {O}tago, {N}ew
  {Z}ealand.
\newblock {\em Transactions and Proceedings of the Royal Society of New
  Zealand}, 72:201--224, 1942.

\bibitem{valanis1967}
K.~C. Valanis and R.~F. Landel.
\newblock The strain-energy function of a hyperelastic material in terms of the
  extension ratios.
\newblock {\em Journal of Applied Physics}, 38(7):2997--3002, 1967.

\bibitem{vallee1978}
C.~Vall{\'e}e.
\newblock Lois de comportement {\'e}lastique isotropes en grandes
  d{\'e}formations.
\newblock {\em International Journal of Engineering Science}, 16(7):451--457,
  1978.

\bibitem{vallee2008}
C.~Vall{\'e}e, D.Fortun{\'e}, and C.~Lerintiu.
\newblock On the dual variable of the {Cauchy} stress tensor in isotropic
  finite hyperelasticity.
\newblock {\em Comptes Rendus Mecanique}, 336(11):851--855, 2008.

\bibitem{xiao2011}
H.~Xiao, Z.~Yue, and L.~He.
\newblock Hill's class of compressible elastic materials and finite bending
  problems: {E}xact solutions in unified form.
\newblock {\em International Journal of Solids and Structures},
  48(9):1340--1348, 2011.

\end{thebibliography}
\end{document}